\documentclass[10pt,a4paper]{article}
\usepackage{amsfonts}
\usepackage[latin9]{inputenc}
\usepackage{amsmath}
\usepackage{amssymb}
\usepackage{breqn}
\usepackage{url}
\usepackage{graphicx}
\usepackage[pdfstartview=FitH]{hyperref}
\usepackage{amsthm}
\usepackage{authblk}
\usepackage{hyperref}
\usepackage{url}
\makeatletter
\begin{document}
\newtheorem{theorem}{Theorem}[section]
\newtheorem{lemma}[theorem]{Lemma}
\newtheorem{definition}[theorem]{Definition}
\newtheorem{claim}[theorem]{Claim}
\newtheorem{example}[theorem]{Example}
\newtheorem{remark}[theorem]{Remark}
\newtheorem{proposition}[theorem]{Proposition}
\newtheorem{corollary}[theorem]{Corollary}
\newtheorem{observation}[theorem]{Observation}

\title{Averaged projections, angles between groups and strengthening of Banach property (T)}
\author{Izhar Oppenheim}
\affil{Department of Mathematics\\
 Ben-Gurion University of the Negev  \\
 Be'er Sheva 84105, Israel \\
E-mail: izharo@bgu.ac.il}

\maketitle
\textbf{Abstract}. Recently, Lafforgue introduced a new strengthening of Banach property (T), which he called strong Banach property (T) and showed that this property has implications regarding fixed point properties and Banach expanders. In this paper, we introduce a new strengthening of Banach property (T), called ``robust Banach property (T)'', which is weaker than strong Banach property (T), but is still strong enough to ensure similar applications.   
Using the method of averaged projections in Banach spaces and introducing a new notion of angles between projections, we establish a criterion for robust Banach property (T) and show several examples of groups in which this criterion is fulfilled. We also derive several applications regarding fixed point properties and Banach expanders and give examples of these applications. 

\section{Introduction}
In \cite{Lafforgue1} and \cite{Lafforgue2} V. Lafforgue introduced a very strong variant of property (T), which he named strong Banach property (T) and proved that $SL_3 (\mathbb{F})$, where $\mathbb{F}$ is a non-archimedean local field, has strong Banach property (T).  After Lafforgue's seminal work, his techniques were developed and generalized in \cite{Liao}, \cite{Salle} and \cite{LaatSalle}. We shall start by reviewing the definition of Lafforgue and then introduce a weaker version of this definition which we call robust Banach property (T).

Let $G$ be a locally compact group. Let $\mathcal{F}$ be a family of linear representations on Banach spaces, $\pi : G \rightarrow \mathcal{B} (X)$, that are continuous with respect to the strong operator topology. Define the norm $\Vert . \Vert_{\mathcal{F}}$ on $C_c (G)$ as $\Vert f \Vert_{\mathcal{F}} =  \sup_{\pi \in \mathcal{F}} \Vert \pi (f) \Vert$.
Define $C_{\mathcal{F}} (G)$ to be the completion of $C_c (G)$ with respect to this norm. 
If $\mathcal{F}$ is closed under complex conjugation (i.e., $\pi \in \mathcal{F} \Rightarrow \overline{\pi} \in \mathcal{F}$) and under duality (i.e., $\pi \in \mathcal{F} \Rightarrow \pi^* \in \mathcal{F}$), then $C_{\mathcal{F}} (G)$ is a Banach algebra with an involution 
$$f^* (g) = \overline{f(g^{-1})}, \forall g \in G .$$
Next, we shall define the notion of strong Banach property (T) introduced by Lafforgue in \cite{Lafforgue1}, \cite{Lafforgue2}. \\
For a locally compact group $G$, a length over $G$ is a continuous function $l: G \rightarrow \mathbb{R}_+$ such that $l(g) = l(g^{-1})$ and $l(g_1 g_2) \leq l(g_1) + l(g_2)$ for every $g,g_1,g_2 \in G$. Notice that if $l$ is a length $l$ over $G$, then for any $s \geq 0, c \geq 0$, $sl+c$ is also a length over $G$. For a family $\mathcal{E}$ of Banach spaces and a length function $l$, denote $\mathcal{F} (\mathcal{E},l)$ to be the family of representations $\pi$ on some $X \in \mathcal{E}$ such that $\Vert \pi (g) \Vert \leq e^{l(g)}$ for every $g \in G$. Recall that for any representation $\pi$ on $X$, $X^\pi$ denotes the subspaces of invariant vectors under the action of $\pi$, i.e., 
$$X^\pi = \lbrace v \in X : \forall g \in G, \pi (g).v =v \rbrace .$$

\begin{definition}
A group $G$ has strong Banach property (T)  if for any class of Banach spaces $\mathcal{E}$ of type $>1$ that is stable under duality and under complex conjugation and for any  length $l$ over $G$, there exists $s_0 >0$ such that for every $c \geq 0$, there $p \in C_{\mathcal{F} (\mathcal{E},s_0 l+c)}$ that is a real, self-adjoint idempotent such that for every $\pi \in \mathcal{F} (\mathcal{E},s_0 l+c)$, $\pi (p)$ is a projection on $X^\pi$.
\end{definition}
Our results does not achieve strong Banach property (T), but a slightly weaker notion that we shall call robust property (T). We define it as follows:
\begin{definition}
\label{robust property T definition}
A group $G$ has robust Banach property (T) with respect to a class of Banach spaces $\mathcal{E}$, if for any length $l$ over $G$, there exists $s_0 >0$ and a sequence of real functions $f_n \in C_c (G)$ with the following properties:
\begin{enumerate}
\item For every $n$, $f_n$ is symmetric, i.e., $f_n (g) = f_n (g^{-1})$.
\item For every $n$, $\int f_n=1$.
\end{enumerate}
And such that the sequence $(f_n)$ convergences in $C_{\mathcal{F} (\mathcal{E},s_0 l)}$ to $p$ and $\forall \pi \in \mathcal{F} (\mathcal{E},s_0 l)$, $\pi (p)$ is a projection on $X^\pi$.
\end{definition}
If we assume that $G$ is compactly generated (e.g., if $G$ has property (T)), we can give an equivalent definition that is more convenient:
\begin{definition}
\label{robust property T definition - compact generation}
Let $G$ be a compactly generated group and let $K$ be some symmetric compact set that generates $G$. For a class of Banach spaces $\mathcal{E}$ and a constant $s_0 \geq 0$, denote $\mathcal{F} (\mathcal{E}, K,s_0)$ to be the class of all the representations $\pi$ of $G$ on some $X \in \mathcal{E}$ such that $\sup_{g \in K} \Vert \pi (g) \Vert \leq e^{s_0}$. 

$G$ has robust Banach property (T) with respect to a class of Banach spaces $\mathcal{E}$, if there exists $s_0 >0$ and a sequence of real functions $f_n \in C_c (G)$ with the following properties:
\begin{enumerate}
\item For every $n$, $f_n$ is symmetric, i.e., $f_n (g) = f_n (g^{-1})$.
\item For every $n$, $\int f_n=1$.
\end{enumerate}
And such that the sequence $(f_n)$ convergences in $C_{\mathcal{F} (\mathcal{E},K,s_0 )}$ to $p$ and $\forall \pi \in \mathcal{F} (\mathcal{E},K,s_0)$, $\pi (p)$ is a projection on $X^\pi$.
\end{definition}
We'll leave the proof of equivalence between the two definitions to the reader. Note that in the above definition, $s_0$ depends on the choice of the generating set $K$, but the fact that $G$ has robust Banach property (T) with respect to $\mathcal{E}$ does not depend on this choice. 
\begin{remark}
The criteria we'll give below for robust Banach property (T) assumes compact generation and therefore definition \ref{robust property T definition - compact generation} is more convenient. We shall use the more general definition \ref{robust property T definition}, only when proving the general implications of robust Banach property (T) in the appendix.  
\end{remark}

\begin{remark}
The reader should note that $\mathcal{E}$ in the definition above is not assumed to be closed under duality or complex conjugation. Instead, we added the conditions that the functions $f_n$ are all symmetric and real. This definition was inspired by the definition given by de la Salle in \cite{Salle} for strong Banach property (T) with respect to a class of Banach spaces.
\end{remark}

\begin{remark}
If $G$ has robust Banach property (T) with respect to a class $\mathcal{E}$ that is closed under duality and complex conjugation, then the projection $p$ is a central idempotent and therefore a Kazhdan projection (see \cite{DNowak} and reference therein for details on Kazhdan projections). 
\end{remark}

\begin{remark}
Property (T) and the equivalent property (FH) are usually considered rigid, i.e., they are preserved under small changes. Following this line of thought, de la Salle has recently proven \cite{Salle2} that any group with property (T) will have robust Banach property (T) with respect to a class of Banach spaces that are all small enough deformations of Hilbert spaces. However, the reader should note that different groups with property (T) will allow different extents of deformation. Therefore, the question of what is $\mathcal{E}$ in the definition above for a given group remains interesting.
\end{remark}

Our main achievement is establishing criteria for robust Banach property (T) for a class of Banach spaces for groups $G$ with the following structure: $G$ is generated by compact subgroups $K_1,...,K_N$ such that each pair $K_i,K_j$ generate a compact group in $G$. This set up is quite general and apply to a several families of groups such as groups acting on Buildings (under certain assumptions on the action) and Kac-Moody-Steinberg groups defined in \cite{ErshovJZ}. Our criteria relays on bounding the "angle" between $K_i$ and $K_j$ in every unitary representation (on Hilbert spaces) of $\langle K_i, K_j \rangle$  (see exact formulation in theorems \ref{mSBT from angles in unitary rep theorem}, \ref{mSBT via Schatten norms}) below). This approach is heavily influenced by the work of Ershov and Jaikin-Zapirain in \cite{ErshovJZ} and the work of Kassabov in \cite{Kassabov} regrading property (T) via the notion of angle between groups. As far as we can tell, the  approach taken in this paper is different from the one taken by Lafforgue and his successors (although it seems to share some common features).  We also owe a great debt to the work of de la Salle \cite{Salle}, which provided the necessary machinery that allows us to use information regarding the unitary representations on Hilbert spaces to deduce information regarding representations on Banach spaces.

In order to prove our main results, we also establish a criterion for the convergence of the averaged projections method in Banach spaces that may be of independent interest. 

Robust Banach property (T) has two nice applications (which are the same applications derived in \cite{Lafforgue2} for strong property (T)). 
\subsection{Fixed point property}
We recall the following definitions given in \cite{BFGM}: 
\begin{definition}
Let $G$ be a topological group and let $X$ be a Banach space. $G$ is said to have property $F_X$ if every continuous affine isometric action of $G$ on $X$ has a fixed point. \\
For $p \in [1, \infty]$, $G$ is said to have property $F_{L^p}$ if every continuous affine isometric action of $G$ on any $L^p$ space has a fixed point.
\end{definition}
We note that it was proven in \cite{BFGM} and \cite{BGM} that if $G$ has property (T), then $G$ has property $F_{L^p}$ for every $p \in [1,2]$. 

Robust Banach property (T) can be used to prove property $F_X$ using the following proposition:

\begin{proposition}
\label{fixed point proposition}
Let $X$ be a Banach space and $G$ be a locally compact group. If $G$ has robust  Banach property (T) with respect to $\mathbb{C} \oplus X$ with the $l_2$ norm, then $G$ has property $F_X$.
\end{proposition}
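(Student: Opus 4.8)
The plan is to use the classical construction that turns an affine isometric action into a linear representation on $\mathbb{C}\oplus X$, feed that representation into robust Banach property (T), and then recover a fixed point by evaluating the limiting averaged projection on a suitable vector.

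First I would fix notation. Any continuous affine isometric action of $G$ on $X$ can be written as $g\cdot v=\pi(g)v+b(g)$, where $\pi\colon G\to\mathcal{B}(X)$ is a strongly continuous representation by linear isometries and $b\colon G\to X$ is a continuous $1$-cocycle, $b(g_{1}g_{2})=\pi(g_{1})b(g_{2})+b(g_{1})$; the action has a fixed point if and only if $b$ is a coboundary, i.e.\ $b(g)=v_{0}-\pi(g)v_{0}$ for some $v_{0}\in X$. Since $\pi$ is isometric, the function $l(g):=\|b(g)\|$ is continuous, symmetric (because $b(g^{-1})=-\pi(g^{-1})b(g)$) and subadditive, hence a length on $G$. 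Applying robust Banach property (T) with respect to $\mathcal{E}=\{\mathbb{C}\oplus X\text{ with the }l_{2}\text{ norm}\}$ (Definition \ref{robust property T definition}) to this length yields $s_{0}>0$ and symmetric real functions $f_{n}\in C_{c}(G)$ with $\int f_{n}=1$ that converge in $C_{\mathcal{F}(\mathcal{E},s_{0}l)}$ to an element $p$ such that $\sigma(p)$ is a projection onto $X^{\sigma}$ for every $\sigma\in\mathcal{F}(\mathcal{E},s_{0}l)$.

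Next I would build the representation. Rescale the cocycle by setting $\beta:=s_{0}b$, which is again a $1$-cocycle for $\pi$, and define $\rho\colon G\to\mathcal{B}(\mathbb{C}\oplus X)$ by $\rho(g)(t,v)=(t,\pi(g)v+t\beta(g))$. The cocycle identity makes $\rho$ a strongly continuous representation, and a short estimate using that $\pi$ is isometric gives, with $c=\|\beta(g)\|$, the bound $\|\rho(g)(t,v)\|^{2}\le |t|^{2}+(\|v\|+c|t|)^{2}\le(1+c+c^{2})(|t|^{2}+\|v\|^{2})\le(1+c)^{2}(|t|^{2}+\|v\|^{2})$, so $\|\rho(g)\|\le 1+\|\beta(g)\|\le e^{\|\beta(g)\|}=e^{s_{0}l(g)}$. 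Hence $\rho\in\mathcal{F}(\mathcal{E},s_{0}l)$; in particular $\rho(p)$ is a projection onto $(\mathbb{C}\oplus X)^{\rho}$, and since $f\mapsto\rho(f)$ extends to a norm-nonincreasing map $C_{\mathcal{F}(\mathcal{E},s_{0}l)}\to\mathcal{B}(\mathbb{C}\oplus X)$ we have $\rho(f_{n})\to\rho(p)$ in operator norm. I would then identify the invariant subspace: a vector $(t,v)$ is $\rho$-invariant exactly when $\pi(g)v+t\beta(g)=v$ for all $g$, which for $t=0$ says $v\in X^{\pi}$ and for $t\neq 0$ says $\beta(g)=\tfrac{v}{t}-\pi(g)\tfrac{v}{t}$, i.e.\ $\beta$ (equivalently $b$) is a coboundary and $\tfrac{v}{ts_{0}}$ is a fixed point of the original action. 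Thus either a fixed point already exists, or $(\mathbb{C}\oplus X)^{\rho}=\{0\}\oplus X^{\pi}$. To exclude the latter, apply $\rho(f_{n})$ to $(1,0)$: since $\rho(g)(1,0)=(1,\beta(g))$ and $\int f_{n}=1$, one gets $\rho(f_{n})(1,0)=\bigl(1,\int f_{n}(g)\beta(g)\,dg\bigr)$, whose first coordinate equals $1$ for every $n$; letting $n\to\infty$, the vector $\rho(p)(1,0)\in(\mathbb{C}\oplus X)^{\rho}$ again has first coordinate $1$, contradicting $(\mathbb{C}\oplus X)^{\rho}=\{0\}\oplus X^{\pi}$. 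Hence the original action has a fixed point — explicitly, $\tfrac{1}{s_{0}}\int f_{n}(g)\beta(g)\,dg=\int f_{n}(g)b(g)\,dg$ converges to one — and since the action was arbitrary, $G$ has property $F_{X}$.

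The only delicate point is that the definition of robust Banach property (T) fixes the growth exponent $s_{0}$ for us, so one cannot simply normalize $\rho$ to have norm $\le e^{s_{0}l(g)}$; the remedy is to run robust property (T) on the length $\|b(\cdot)\|$ first and only afterwards rescale the cocycle by the resulting $s_{0}$, which is harmless since multiplying a $1$-cocycle by a positive scalar does not affect whether it is a coboundary. Everything else is the standard coboundary/averaging argument, and the role of the $l_{2}$ direct sum is exactly to keep $\rho$ of controlled exponential growth so that it lands in $\mathcal{F}(\mathcal{E},s_{0}l)$.
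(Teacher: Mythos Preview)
Your proof is correct and follows essentially the same line as the paper's: linearize the affine action on $\mathbb{C}\oplus X$, control the operator norm of the resulting representation so that it lies in $\mathcal{F}(\mathcal{E},s_0 l)$, and read off a fixed point from the first coordinate of $\rho(p)(1,0)$. The only cosmetic difference is the rescaling device: the paper keeps the cocycle fixed and dilates the $\mathbb{C}$-coordinate by a large constant $D$ (so that $\|\pi(g)\|\le 1+\sqrt{3/D}\,l(g)$ and one can take $D$ large to match $s_0$), whereas you keep the $\mathbb{C}$-coordinate fixed and multiply the cocycle by $s_0$; these are the same trick, since scaling the first coordinate by $D$ is equivalent to scaling the cocycle by $1/D$ in the associated linear representation.
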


The proof of this proposition needs an (easy) adaptation of the proof given in  \cite{Lafforgue2}. For completeness, the proof is provided in the appendix.

Using this application we are able to prove new fixed point theorems. For instance we prove a generalization of the following results:
\begin{theorem}
\label{groups acting on building theorem intro}
Let $\Sigma$ be a pure $n$-dimensional simplicial complex that is galley connected. Let $G$ be a group acting simplicially on a $\Sigma$ such that the action is cocompact and the fundamental domain $\Sigma / G$ is a single $n$-dimensional simplex and that the stabilizer of each $(n-2)$-dimensional simplex is a compact subgroup of $G$. Assume that every $1$-dimensional link of $\Sigma$ is a finite connected graph. Assume further that there is a constant $\eta > 1- \frac{1}{8(n+1)-11}$ such that for every $1$ dimensional link the smallest positive eigenvalue of the Laplacian on the link is $\geq \eta$. For every $p_0 >2$ there is a constant $1 > K (p_0)$ such that if $\eta \geq K(p_0)$, then $G$ has property $F_{L^p}$ for every $p \in [1,p_0)$.
\end{theorem}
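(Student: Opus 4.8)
The plan is to derive Theorem \ref{groups acting on building theorem intro} from the abstract angle criteria for robust Banach property (T) (Theorems \ref{mSBT from angles in unitary rep theorem} and \ref{mSBT via Schatten norms}) together with the fixed point Proposition \ref{fixed point proposition}. The argument splits into three parts: first, extract from the action on $\Sigma$ a description of $G$ as a group generated by $n+1$ compact subgroups, any two of which generate a compact subgroup; second, translate the spectral gap hypothesis on the $1$-dimensional links into a bound on the angle between these subgroups in every unitary representation of the compact group they generate; third, feed this bound into the criteria to obtain robust Banach property (T) with respect to $\mathbb{C}\oplus L^p$ with the $l_2$ norm, and conclude via Proposition \ref{fixed point proposition}.

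For the first part, let $\sigma$ be the fundamental $n$-simplex, with codimension-$1$ faces $F_0,\dots,F_n$ and codimension-$2$ faces $F_{ij}=F_i\cap F_j$. The quotient map $\Sigma\to\Sigma/G=\sigma$ colors the vertices of $\Sigma$ by the vertices of $\sigma$ in a $G$-invariant way, so the action is type preserving; consequently the stabilizer $K_i:=G_{F_i}$ is the pointwise stabilizer of $F_i$ and hence is contained in $G_{F_{ij}}$ for every $j\neq i$, so it is compact by hypothesis. Gallery connectedness together with the fact that the fundamental domain is a single simplex yields $G=\langle K_0,\dots,K_n\rangle$, and $\langle K_i,K_j\rangle\leq G_{F_{ij}}$ is compact. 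Thus $G$ has precisely the structure demanded by the abstract criteria, with $N=n+1$ subgroups.

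For the second and third parts, fix $i\neq j$. The compact group $\langle K_i,K_j\rangle$ acts on the $1$-dimensional link $L_{ij}:=\mathrm{lk}_\Sigma(F_{ij})$, a finite connected bipartite graph whose two sides carry the action with point stabilizers (conjugate to) $K_i$ and $K_j$. By the now classical local-to-global spectral method of Dymara and Januszkiewicz \cite{DymaraJ} and of Ershov and Jaikin-Zapirain \cite{ErshovJZ}, in Kassabov's language of angles \cite{Kassabov}, in any unitary representation of $\langle K_i,K_j\rangle$ with no nonzero invariant vectors the orthogonal projections onto the $K_i$- and the $K_j$-invariant vectors have product of norm at most $\sqrt{1-\lambda_{ij}}$, where $\lambda_{ij}\geq\eta$ is the smallest positive Laplacian eigenvalue of $L_{ij}$; equivalently the cosine of the angle between $K_i$ and $K_j$ is at most $\sqrt{1-\eta}$. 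The threshold $\eta>1-\frac{1}{8(n+1)-11}$ is exactly what makes $\sqrt{1-\eta}$ small enough for Theorem \ref{mSBT from angles in unitary rep theorem} with $N=n+1$ to apply, yielding robust Banach property (T) with respect to a class of Banach spaces that contains the Hilbert spaces. To reach $p>2$ one instead invokes the Schatten-norm refinement, Theorem \ref{mSBT via Schatten norms}: it produces robust Banach property (T) with respect to a class $\mathcal{E}$ of Banach spaces (built by interpolation with Hilbert space, following \cite{Salle}) which enlarges as $\sqrt{1-\eta}$ shrinks, and one defines $K(p_0)<1$ so that $\eta\geq K(p_0)$ forces $\mathbb{C}\oplus L^p$ with the $l_2$ norm to lie in $\mathcal{E}$ for every $p\in[1,p_0)$; necessarily $K(p_0)\to 1$ as $p_0\to\infty$. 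Proposition \ref{fixed point proposition} then gives property $F_{L^p}$ for every $p\in[1,p_0)$ (consistent, for $p\in[1,2]$, with the classical fact that property (T) implies $F_{L^p}$).

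The main obstacle is the quantitative bookkeeping tying the three parts together: deriving the angle bound $\sqrt{1-\eta}$ from the link spectral gap with exactly the normalization matching the hypotheses of the criteria, and --- more delicately --- controlling how the range of $p$ for which $\mathbb{C}\oplus L^p\in\mathcal{E}$ degrades as a function of $\sqrt{1-\eta}$ in the Schatten-norm version; this is what forces $\eta$ close to $1$ and is responsible for the dependence of $K(p_0)$ on $p_0$. By contrast, parts one and three are essentially formal once the abstract theorems and Proposition \ref{fixed point proposition} are in hand.
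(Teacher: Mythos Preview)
Your overall architecture is right --- extract the $n+1$ compact generators from the codimension-$1$ face stabilizers, bound the angle between each pair via the link spectrum, and feed this into the abstract criteria plus Proposition~\ref{fixed point proposition}. But the quantitative heart of step two is off, and the mismatch is not cosmetic.

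You assert that the cosine of the angle between $K_i$ and $K_j$ is bounded by $\sqrt{1-\eta}$. In the paper's normalization it is bounded by $1-\eta$: Lemma~\ref{Laplacian eigenvalues lemma} shows that the nonzero eigenvalues of $\lambda\bigl((k_ik_j-k_{i,j})^*(k_ik_j-k_{i,j})\bigr)$ are exactly the $(1-\eta_k)^2$, so Corollary~\ref{Laplacian eigenvalues corollary} gives $\Vert\lambda(k_ik_j-k_{i,j})\Vert\leq 1-\eta_2$, hence $\cos^{\mathcal H}(\angle(k_i,k_j))\leq 1-\eta$. This is precisely why the hypothesis $\eta>1-\tfrac{1}{8(n+1)-11}$ is the right threshold for Theorem~\ref{mSBT from angles in unitary rep theorem}: it says $1-\eta<\tfrac{1}{8(n+1)-11}$. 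With your bound $\sqrt{1-\eta}$ the stated threshold would only give $\sqrt{1-\eta}<\tfrac{1}{\sqrt{8(n+1)-11}}$, which does \emph{not} meet the criterion, so your sentence ``the threshold \ldots is exactly what makes $\sqrt{1-\eta}$ small enough'' is false. Replace $\sqrt{1-\eta}$ by $1-\eta$ throughout and use the paper's own Lemma~\ref{Laplacian eigenvalues lemma}/Corollary~\ref{Laplacian eigenvalues corollary} (which identify the link of $F_{ij}$ with the bipartite coset graph of Definition~\ref{graph of K_1, K_2 definition}) rather than appealing to \cite{DymaraJ,ErshovJZ,Kassabov} for a bound in a possibly different normalization.

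A smaller point on step three: for the $F_{L^p}$ conclusion you do not need the Schatten route. Since $L^p$ is strictly $\theta$-Hilbertian with $\theta=\tfrac{2}{p}$ (for $p\geq 2$), Theorem~\ref{mSBT from angles in unitary rep theorem} and Corollary~\ref{fixed point corollary from angles in unitary rep} already give $F_{L^p}$ for all $p<2\,\tfrac{\ln 2-\ln(1-\eta)}{\ln 2+\ln(8(n+1)-11)}$, and one simply takes $K(p_0)$ so that this upper bound exceeds $p_0$. The paper reserves Theorem~\ref{mSBT via Schatten norms} for the broader class built from type/cotype data in the full Section~5.1 statement.
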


\begin{theorem}
\label{Steinberg group theorem intro}
For any Banach space $X$ of type $p_1$ and cotype $p_2$ such that $\frac{1}{p_1} - \frac{1}{p_2} < \frac{1}{4}$ and any $n \geq 3, m \geq 1$, the Steinberg group $St_n (\mathbb{F}_q [t_1,...,t_m])$ has property $F_X$ provided that $q$ is sufficiently large.
\end{theorem}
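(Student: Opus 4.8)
The plan is to deduce property $F_X$ from robust Banach property (T) via Proposition~\ref{fixed point proposition}: it suffices to show that $G=St_n(\mathbb{F}_q[t_1,\dots,t_m])$ has robust Banach property (T) with respect to $\mathbb{C}\oplus X$ with the $\ell_2$-sum norm. Since this space again has type $p_1$ and cotype $p_2$ with the same gap $\tfrac1{p_1}-\tfrac1{p_2}<\tfrac14$, I would instead prove robust Banach property (T) with respect to the whole class $\mathcal{E}$ of Banach spaces of type $p_1$ and cotype $p_2$, and obtain it from the criterion of Theorem~\ref{mSBT via Schatten norms} (equivalently Theorem~\ref{mSBT from angles in unitary rep theorem}). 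That criterion asks for two inputs: first, that $G$ be generated by compact subgroups $K_1,\dots,K_N$ with $\langle K_i,K_j\rangle$ compact for every pair; second, a bound — in the Schatten sense appearing there — on the angle between $K_i$ and $K_j$ inside every unitary representation of $\langle K_i,K_j\rangle$, good enough relative to the interpolation parameter attached to $\mathcal{E}$. By de la Salle's machinery \cite{Salle}, a space of type $p_1$ and cotype $p_2$ is governed by an interpolation parameter of order $\tfrac1{p_1}-\tfrac1{p_2}$, and the hypothesis $\tfrac1{p_1}-\tfrac1{p_2}<\tfrac14$ is exactly what makes the required Schatten-norm inequality of Theorem~\ref{mSBT via Schatten norms} available for this class.

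For the generation hypothesis I would invoke the combinatorial structure of \cite{ErshovJZ}: $G$ is a quotient of a Kac--Moody--Steinberg group $St_\Gamma(\mathbb{F}_q)$ for a suitable finite simply-laced graph $\Gamma=\Gamma(n,m)$, whose defining simple root subgroups $X_\alpha\cong(\mathbb{F}_q,+)$ are finite, and — because no two simple roots of a Steinberg presentation are opposite and $\Gamma$ is simply-laced — every rank-two join $H_{\alpha\beta}=\langle X_\alpha,X_\beta\rangle$ is a finite nilpotent group: it is $X_\alpha\times X_\beta$ when $\alpha+\beta$ is not a root, and the Heisenberg group $X_\alpha X_\beta X_{\alpha+\beta}$ over $\mathbb{F}_q$ otherwise. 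Robust Banach property (T) with respect to $\mathcal{E}$ passes to quotients — the symmetric functions $f_n$ and their limit idempotent push forward along the surjection — so it is enough to verify the criterion for $St_\Gamma(\mathbb{F}_q)$ with the $K_i$ running over the $X_\alpha$.

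It then remains to estimate, uniformly in the pair, the angle between $X_\alpha$ and $X_\beta$ in an arbitrary unitary representation of $H_{\alpha\beta}$. When $\alpha+\beta$ is not a root the two averaging projections commute and the angle is trivial; the genuine case is the Heisenberg group over $\mathbb{F}_q$, where a direct computation in the Schrödinger model shows that on the orthogonal complement of the common invariant vectors the product of the averaging projections over $X_\alpha$ and over $X_\beta$ has norm exactly $q^{-1/2}$ (in the $q$-dimensional irreducibles the $X_\alpha$-invariant and $X_\beta$-invariant unit vectors are a constant function and a delta function, with inner product $q^{-1/2}$, and there are no common invariant vectors there). Hence the angle between any two of our subgroups is bounded below $1$ by a margin tending to $1$ as $q\to\infty$, at a rate independent of $n$ and $m$. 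Feeding $q^{-1/2}$ into the numerics of Theorem~\ref{mSBT via Schatten norms}: the Schatten-norm hypothesis for $\mathcal{E}$ is implied by an angle bound $\cos(\text{angle})\le\varepsilon$ for a fixed $\varepsilon=\varepsilon(\mathcal{E})>0$ (positive thanks to $\tfrac1{p_1}-\tfrac1{p_2}<\tfrac14$), possibly also depending on $N=N(n,m)$; since $q^{-1/2}$ eventually beats any such fixed $\varepsilon$, the criterion is met for all sufficiently large $q$, which yields robust Banach property (T) of $G$ with respect to $\mathcal{E}$ and hence property $F_X$.

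I expect the main obstacle to be this last step: pushing the clean finite-group estimate $q^{-1/2}$ through de la Salle's interpolation and Schatten-class machinery while keeping everything quantitative, i.e. turning the type-$p_1$/cotype-$p_2$ data of $X$ into an explicit Schatten exponent and an explicit threshold $\varepsilon(\mathcal{E})$, and verifying that the bound clears that threshold uniformly over all $\binom{N}{2}$ pairs — with $N=N(n,m)$ entering only through ``$q$ large enough'' and not through the feasibility of the inequality. A secondary, routine point is the bookkeeping of the quotient step, checking that the convergence $f_n\to p$ for $St_\Gamma(\mathbb{F}_q)$ descends to the same statement for $G$ with the pushed-forward generating set.
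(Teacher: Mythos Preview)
Your strategy is essentially the paper's: exhibit $St_n(\mathbb{F}_q[t_1,\dots,t_m])$ as generated by finitely many copies of $\mathbb{F}_q$ whose pairwise joins are either abelian or Heisenberg over $\mathbb{F}_q$, compute the angle $q^{-1/2}$ in the Heisenberg case, and feed this into the Schatten-norm criterion (Theorem~\ref{mSBT via Schatten norms}) together with Proposition~\ref{fixed point proposition}. Two points deserve comment.

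First, the Kac--Moody--Steinberg detour is unnecessary. The paper simply writes down $N=n+m$ generating subgroups $K_i\cong\mathbb{F}_q$ \emph{inside} $St_n(\mathbb{F}_q[t_1,\dots,t_m])$ (namely $K_i=\{x_{i,i+1}(a):a\in\mathbb{F}_q\}$ for $i<n$ and $K_i=\{x_{n,1}(a\,t_{i-n}):a\in\mathbb{F}_q\}$ for $i\ge n$) and checks directly that each $K_{i,j}$ is either $\mathbb{F}_q\times\mathbb{F}_q$ or $H_q$. No quotient argument is needed.

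Second, and more substantively, your account of the ``main obstacle'' misstates what has to be checked. It is \emph{not} true that the Schatten-norm hypothesis of Theorem~\ref{mSBT via Schatten norms} is implied by an angle bound $\cos(\text{angle})\le\varepsilon(\mathcal{E})$; the Schatten norm sees multiplicities, not just the top singular value. What the paper actually computes is that in the regular representation of $H_q$ the operator $\lambda(k_1k_2-k_{1,2})$ has the eigenvalue $q^{-1/2}$ with multiplicity $q(q-1)$ (there are $q-1$ irreducibles of dimension $q$, each appearing $q$ times, each contributing one nonzero singular value), so
\[
\Vert \lambda(k_1k_2-k_{1,2})\Vert_{S^r}=\frac{(q^2-q)^{1/r}}{\sqrt{q}}\sim q^{\,2/r-1/2}.
\]
This tends to $0$ as $q\to\infty$ precisely when $r>4$, and Proposition~\ref{Schatten norms prop from Salle} requires $\tfrac1{p_1}-\tfrac1{p_2}<\tfrac1r$; combining these two constraints is exactly where the threshold $\tfrac14$ comes from. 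Once you have this computation, Theorem~\ref{mSBT via Schatten norms} applies for $q$ large (depending on $p_1,p_2,T_{p_1}(X),C_{p_2}(X),n,m$), and the rest of your outline goes through.
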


\begin{theorem}
\label{Steinberg group theorem intro2}
For any $p_0 >2$, $n \geq 3$, $m \geq 1$ there is a constant $q(p,n,m)$ such that for any prime $q \geq q(p,n,m)$, Steinberg group $St_n (\mathbb{F}_q [t_1,...,t_m])$ has property $F_{L^p}$  for every $p \in [1,p_0)$.
\end{theorem}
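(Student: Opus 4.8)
The plan is to run the same machinery that underlies Theorem \ref{Steinberg group theorem intro}, feeding it into Proposition \ref{fixed point proposition}, but to extract more than the clean type/cotype threshold yields. A commutative $L^p$-space has type $\min(p,2)$ and cotype $\max(p,2)$, and adjoining a one-dimensional summand with the $\ell_2$-norm changes neither; hence $\mathbb{C}\oplus_2 L^p$ meets the hypothesis $\tfrac1{p_1}-\tfrac1{p_2}<\tfrac14$ of Theorem \ref{Steinberg group theorem intro} exactly for $\tfrac43<p<4$, and the endpoint $p=4$ already fails it, whereas here we want every finite $p$. So outside that range the argument must return to the sharp criterion --- the Schatten-norm version, Theorem \ref{mSBT via Schatten norms} --- rather than to the packaged corollary, and must track how the class of Banach spaces for which robust Banach property (T) holds grows with the quality of the input.

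The structure I would follow is this. The Steinberg group $St_n(\mathbb{F}_q[t_1,\dots,t_m])$ with $n\ge3$ admits (working, if necessary, with an auxiliary group surjecting onto it, as in \cite{ErshovJZ}) the generation-by-subgroups structure on which Theorem \ref{mSBT via Schatten norms} operates, the ``local'' pieces being groups over $\mathbb{F}_q$, and the relevant angles between the generating subgroups in unitary representations of these pieces can be bounded with an estimate that approaches its optimal value as $q\to\infty$. Feeding this into Theorem \ref{mSBT via Schatten norms} and the transference procedure of \cite{Salle} --- using that one only needs $\sup_{g\in K}\Vert\pi(g)\Vert\le e^{s_0}$ over a fixed generating set $K$, so that $s_0$ may be taken independent of $q$ and the averaging operators stay uniformly bounded on the ``non-Hilbertian'' endpoint --- yields robust Banach property (T) for $St_n(\mathbb{F}_q[t_1,\dots,t_m])$ with respect to a class $\mathcal{E}_q$ of Banach spaces whose size is governed by that angle bound: $\mathcal{E}_q$ contains every space which, up to subquotients and $\ell_2$-sums, is a complex interpolation space $[\,H,\,S^\infty]_\theta$ between a Hilbert space $H$ and the Schatten class $S^\infty$ with $\theta<\theta_0(q)$, and $\theta_0(q)\to1$ as $q\to\infty$. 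Since for $2\le p<\infty$ one has $L^p=[L^2,L^\infty]_{1-2/p}$ and commutative $L^p$ is a subquotient of $S^p=[S^2,S^\infty]_{1-2/p}$, the space $\mathbb{C}\oplus_2 L^p$ belongs to $\mathcal{E}_q$ as soon as $1-2/p<\theta_0(q)$, i.e.\ as soon as $q$ is large in terms of $p$; Proposition \ref{fixed point proposition} then gives $F_{L^p}$. For $1\le p\le2$ the property $F_{L^p}$ follows instead from property (T) of $St_n(\mathbb{F}_q[t_1,\dots,t_m])$ (Ershov--Jaikin-Zapirain \cite{ErshovJZ}) together with \cite{BFGM}. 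Choosing $q(p_0,n,m)$ so that $\theta_0(q)>1-2/p_0$ for every prime $q\ge q(p_0,n,m)$ then gives $F_{L^p}$ for all $p\in[1,p_0)$.

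The crux --- and the step I expect to be the main obstacle --- is the quantitative angle estimate for the pieces and, above all, the assertion that it improves without limit as $q\to\infty$, so that $\theta_0(q)\to1$ rather than saturating below $1$; this is precisely where $n\ge3$ and the largeness of $q$ are used, and where the analysis of unitary representations of the relevant rank-two groups over $\mathbb{F}_q$ (in the spirit of \cite{Kassabov} and \cite{ErshovJZ}) must be made effective. The remaining ingredients --- the type/cotype bookkeeping, the interpolation identities for $L^p$ and $S^p$, the stability of $\mathcal{E}_q$ under subquotients, $\ell_2$-sums and hence adjoining a one-dimensional summand, and the low range $p\le2$ via property (T) --- are routine or directly quotable given the machinery established earlier and in \cite{Salle}.
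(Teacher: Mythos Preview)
Your approach is essentially the paper's, but a few moving parts are mis-identified. The paper exhibits $St_n(\mathbb{F}_q[t_1,\dots,t_m])$ directly as generated by $N=n+m$ cyclic subgroups $K_i\cong\mathbb{F}_q$ with each $K_{i,j}$ either $\mathbb{F}_q\times\mathbb{F}_q$ or the Heisenberg group $H_q$; no auxiliary surjecting group is needed. The ``crux'' you flag is then an explicit elementary computation on the irreducible representations of $H_q$: one gets $\cos(\angle(H^{\pi(K_i)},H^{\pi(K_j)}))\le 1/\sqrt{q}$, so the bound goes to $0$ as $q\to\infty$ with no further representation-theoretic analysis required.

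The theorem to feed this into is Theorem~\ref{mSBT from angles in unitary rep theorem} (the Hilbert-angle criterion), not Theorem~\ref{mSBT via Schatten norms}. The Schatten route has $\cos_{\max}^{S^r}\le (q^2-q)^{1/r}/\sqrt{q}$, which tends to $0$ only for $r>4$; this is exactly why method~2 produces the hard constraint $\tfrac{1}{p_1}-\tfrac{1}{p_2}<\tfrac14$ and cannot by itself reach every $L^p$, no matter how large $q$ is. By contrast, the angle criterion together with Proposition~\ref{interpolation of mathcal(E) prop} yields robust Banach~(T) for all $\theta'$-Hilbertian spaces with $\theta'>\theta(q)=\frac{\ln 2+\ln(8N-11)}{\ln 2+\ln\sqrt{q}}$, with no detour through $S^\infty$ or $S^p$; since $L^p$ is $\tfrac{2}{p}$-Hilbertian for $p\ge2$, this covers all $p<2/\theta(q)\to\infty$. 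Your interpolation via $S^p=[S^2,S^\infty]_{1-2/p}$ and the embedding of commutative $L^p$ is correct but an unnecessary extra step: Proposition~\ref{interpolation of mathcal(E) prop} allows interpolation against \emph{any} $X_0$, so one may take $X_0=L^\infty$ directly. The handling of $p\le2$ via property~(T) and \cite{BFGM} is fine and matches what the paper uses implicitly.
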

More examples (and a more general statement of the above examples) are given in the last section of this paper. 

\subsection{Expander graphs}
\begin{definition}
Let $X$ be a Banach space and $\lbrace (V_i,E_i) \rbrace_{i \in \mathbb{N}}$ be a sequence of finite graphs with uniformly bounded degree, such that $\lim_i \vert V_i \vert = \infty$. We say that $\lbrace (V_i,E_i) \rbrace_{i \in \mathbb{N}}$ has a uniform coarse embedding in $X$ if there are functions $\phi_i : V_i \rightarrow X$ and functions $\rho_-, \rho_+: \mathbb{N} \rightarrow \mathbb{R}$ such that $\lim_n \rho_- (n) = \infty$ and 
$$\forall i \in \mathbb{N}, \forall x,y \in V_i, \rho_- (d_i (x,y)) \leq \Vert \phi_i (x) - \phi_i (y) \Vert \leq \rho_+ (d_i (x,y)) ,$$
where $d_i (x,y)$ is the graph distance in $(V_i,E_i)$ between $x$ and $y$.  \\
If $\lbrace (V_i,E_i) \rbrace_{i \in \mathbb{N}}$ has no uniform coarse embedding in $X$, we shall say that $\lbrace (V_i,E_i) \rbrace_{i \in \mathbb{N}}$ is a family of $X$-expanders.
\end{definition}

\begin{proposition}
\label{expander proposition}
Let $G$ be a finitely generated discrete group and let $\lbrace N_i \rbrace_{i \in \mathbb{N}}$ be a sequence of finite index normal subgroups of $G$ such that $\bigcap_i N_i = \lbrace 1 \rbrace$. Let $\mathcal{E}$ be a class of Banach spaces that is closed under $l_2$ sums. Fix $S$ to be some symmetric generating set of $G$. If $G$ has robust property (T) with respect to $\mathcal{E}$, then the family of Cayley graphs $\lbrace (G/N_i,S) \rbrace_{i \in \mathbb{N}}$ is a family of $X$-expanders for any $X \in \mathcal{E}$.   
\end{proposition}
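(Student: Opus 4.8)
The plan is a contradiction argument in the spirit of Lafforgue's construction of Banach expanders: from a hypothetical uniform coarse embedding of the Cayley graphs into some $X\in\mathcal{E}$ I will manufacture, inside representations to which robust property (T) applies, vectors that are almost invariant but far from invariant, in conflict with the uniform Poincar\'e-type inequality that robust property (T) provides. To set up, write $V_i=G/N_i$, a finite group (and note $|V_i|\to\infty$, which is the case of interest). Since $\mathcal{E}$ is closed under $\ell_2$-sums, the space $Y_i:=\ell_2(V_i;X)$ --- the $\ell_2$-sum of $|V_i|$ copies of $X$, with normalized norm $\Vert\xi\Vert^2=|V_i|^{-1}\sum_{x\in V_i}\Vert\xi(x)\Vert_X^2$ --- lies in $\mathcal{E}$, and $G$ acts isometrically on $Y_i$ by the translation representation $\rho_i$. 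As an isometric representation $\rho_i$ lies in $\mathcal{F}(\mathcal{E},S,s_0)$ for the $s_0>0$ furnished by robust property (T) (Definition~\ref{robust property T definition - compact generation}, applied with $K=S$), whatever $s_0$ is. Because the translation action of $G$ on $V_i$ is transitive, $Y_i^{\rho_i}$ is exactly the space of constant functions; hence $\rho_i(p)$ is an idempotent onto the constants, and since each $\rho_i(g)$ fixes constant functions and $\int f_n=1$, also $\rho_i(f_n)\rho_i(p)=\rho_i(p)$ for every $n$.

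Next, fix $N$ so large that $\Vert f_N-p\Vert_{C_{\mathcal{F}(\mathcal{E},S,s_0)}}<\tfrac12$; as $\Vert\rho_i(h)\Vert_{\mathcal{B}(Y_i)}\le\Vert h\Vert_{C_{\mathcal{F}(\mathcal{E},S,s_0)}}$ for all $i$, this gives $\Vert\rho_i(f_N)-\rho_i(p)\Vert<\tfrac12$ uniformly in $i$, hence $\Vert\psi\Vert\le 2\Vert(I-\rho_i(f_N))\psi\Vert$ for every $\psi\in\ker\rho_i(p)$ --- the uniform Poincar\'e inequality. Now assume for contradiction a uniform coarse embedding $\phi_i:V_i\to X$, with gauges $\rho_\pm$ which we may take nondecreasing and with $\rho_-(n)\to\infty$. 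Put $\psi_i:=\phi_i-\rho_i(p)\phi_i=\phi_i-c_i$, where $c_i:=\rho_i(p)\phi_i$ is a constant function. Then $\psi_i\in\ker\rho_i(p)$, we have $\Vert\psi_i(x)-\psi_i(y)\Vert=\Vert\phi_i(x)-\phi_i(y)\Vert$ for all $x,y$, and since $\rho_i(f_N)c_i=c_i$ it follows that $(I-\rho_i(f_N))\psi_i=(I-\rho_i(f_N))\phi_i=\sum_g f_N(g)\bigl(\phi_i-\rho_i(g)\phi_i\bigr)$. Each $g$ in the finite set $\mathrm{supp}(f_N)$ moves every vertex of the Cayley graph $(V_i,S)$ a distance at most $\ell:=\max_{g\in\mathrm{supp}f_N}|g|_S$, so the upper gauge gives $\Vert\phi_i-\rho_i(g)\phi_i\Vert\le\rho_+(\ell)$, and therefore $\Vert\psi_i\Vert\le 2\Vert f_N\Vert_{\ell_1}\rho_+(\ell)=:C$, a bound independent of $i$.

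It remains to see that $\Vert\psi_i\Vert\to\infty$, which contradicts the above. Using $\Vert a-b\Vert^2\le 2\Vert a\Vert^2+2\Vert b\Vert^2$ and that $\psi_i$ and $\phi_i$ have the same pairwise differences,
\[
4\Vert\psi_i\Vert^2\;\ge\;\frac{1}{|V_i|^2}\sum_{x,y\in V_i}\Vert\psi_i(x)-\psi_i(y)\Vert^2\;=\;\frac{1}{|V_i|^2}\sum_{x,y}\Vert\phi_i(x)-\phi_i(y)\Vert^2\;\ge\;\frac{1}{|V_i|^2}\sum_{x,y}\rho_-\!\bigl(d_i(x,y)\bigr)^2 .
\]
Since $(V_i,S)$ is vertex-transitive the last expression equals $|V_i|^{-1}\sum_{r\ge 0}D_i(r)\rho_-(r)^2$, where $D_i(r)$ is the number of vertices at distance exactly $r$ from a fixed base vertex; as the degree of $(V_i,S)$ is at most $|S|$ and $|V_i|\to\infty$, for each fixed $R$ the ball of radius $R$ occupies a vanishing fraction of $V_i$, so this is at least $\bigl(1-o(1)\bigr)\rho_-(R)^2$, and letting $R\to\infty$ forces $\Vert\psi_i\Vert\to\infty$. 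Hence no uniform coarse embedding exists; since $X\in\mathcal{E}$ was arbitrary, $\{(G/N_i,S)\}_i$ is a family of $X$-expanders for every $X\in\mathcal{E}$.

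The one point I expect to need care is the bookkeeping that lets us ignore the precise nature of $\rho_i(p)$: we never use that it is the averaging projection, only that it is an idempotent onto the constants with $\rho_i(f_N)\rho_i(p)=\rho_i(p)$ (the latter from $\int f_N=1$), which is exactly what makes $\psi_i=\phi_i-c_i$ at once isometric to $\phi_i$ on pairs and annihilated by $\rho_i(p)$ so that the uniform spectral-gap estimate applies. Everything else is the familiar clash between a Poincar\'e inequality and the growth of spheres in Cayley graphs of growing finite quotients, the only geometric inputs being vertex-transitivity and bounded degree.
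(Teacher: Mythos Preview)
Your proof is correct and follows essentially the same strategy as the paper's appendix: use robust property~(T) on the isometric translation representations on $\ell_2(G/N_i;X)\in\mathcal{E}$ to obtain a uniform Poincar\'e-type inequality, then contradict a putative uniform coarse embedding via the bounded-degree growth of the Cayley graphs. The only noteworthy variations are cosmetic---the paper phrases the Poincar\'e inequality in terms of edge-sums and extracts the contradiction via a median argument (finding a large subset on which $\phi_i$ is bounded), whereas you bound $(I-\rho_i(f_N))\phi_i$ directly by $\rho_+(\ell)$ and average $\rho_-(d_i(x,y))^2$ over all pairs; both routes are standard and equivalent.
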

The proof of this proposition is similar to the proof given in \cite{Lafforgue2}. For completeness, the proof is provided in the appendix. 

Using this application we are able construct new examples of families of graph that are expanders with respect to large classes of (non supereflexive) Banach spaces. For instance we can show the following:
\begin{theorem}
\label{Expander construction intro}
For any Banach space $X$ of type $p_1$ and cotype $p_2$ such that $\frac{1}{p_1} - \frac{1}{p_2} < \frac{1}{4}$, we can construct an $X$-expander by taking Cayley graphs of quotients of the group $EL_n (\mathbb{F}_q [t_1,...,t_m])$ provided that $q$ is sufficiently large.
\end{theorem}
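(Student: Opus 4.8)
The plan is to deduce Theorem \ref{Expander construction intro} from two ingredients already available: the robust Banach property (T) of $G := EL_n(\mathbb{F}_q[t_1,\dots,t_m])$ (with $n\ge 3$) for a suitable class of Banach spaces, which is exactly what the criterion of Theorems \ref{mSBT from angles in unitary rep theorem} and \ref{mSBT via Schatten norms} is designed to produce and which underlies Theorem \ref{Steinberg group theorem intro}, and the expander criterion of Proposition \ref{expander proposition}. Fix $p_1,p_2$ with $\frac{1}{p_1}-\frac{1}{p_2}<\frac14$ and let $\mathcal{E}=\mathcal{E}(p_1,p_2)$ be the class of all Banach spaces of type $p_1$ and cotype $p_2$ with type and cotype constants bounded by a fixed constant large enough to contain the given $X$. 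First I would check that $\mathcal{E}$ is closed under $l_2$ sums: since $p_1\le 2\le p_2$ and $l_2$ itself has type $p_1$ and cotype $p_2$ with constant $1$, an $l_2$-direct sum of spaces with type $p_1$-constant $\le T$ again has type $p_1$ with constant $\le T$, and similarly for cotype $p_2$. Thus $\mathcal{E}$ is an admissible class for Proposition \ref{expander proposition}. (Note also that the gap condition together with cotype $\ge 2$ forces $p_1>1$, so every space in $\mathcal{E}$ has type $>1$, as the machinery requires.)

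Second, I would invoke the robust Banach property (T) statement for $G$ proved in the last section of the paper: for $q$ sufficiently large (large enough, as a function of $p_1,p_2$, to make the required angle bound hold) $G$ has robust Banach property (T) with respect to $\mathcal{E}$. This is the substantive step and is carried out just as for $St_n$ in Theorem \ref{Steinberg group theorem intro}: $G$ is finitely generated; it carries the Kac--Moody--Steinberg structure of \cite{ErshovJZ}, being generated by the elementary root subgroups arranged along the $A_{n-1}$ root system so that each relevant pair generates a subgroup to which the rank-two analysis applies; and the angle between these subgroups in every unitary representation of the subgroup they generate is bounded below in terms of $q$, with the bound improving as $q\to\infty$ — this is the property (T) estimate of \cite{ErshovJZ}. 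Feeding this unitary angle bound into Theorem \ref{mSBT via Schatten norms} and using the Schatten-norm interpolation machinery of de la Salle \cite{Salle}, one obtains, once $q$ is large enough, robust Banach property (T) with respect to every Banach space of type $p_1$ and cotype $p_2$ satisfying $\frac{1}{p_1}-\frac{1}{p_2}<\frac14$, i.e.\ with respect to all of $\mathcal{E}$.

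Third, I would record the standard facts that make Proposition \ref{expander proposition} applicable to $G$. For $n\ge 3$ and a finitely generated ring, $G$ is finitely generated; and, being a finitely generated linear group over a commutative ring, $G$ is residually finite by Malcev's theorem, so it admits a sequence $\{N_i\}$ of finite-index normal subgroups with $\bigcap_i N_i=\{1\}$. Fixing a symmetric generating set $S$ and applying Proposition \ref{expander proposition} with the class $\mathcal{E}$, the Cayley graphs $\{(G/N_i,S)\}_{i\in\mathbb{N}}$ form a family of $Y$-expanders for every $Y\in\mathcal{E}$; in particular they are $X$-expanders for the given $X$, which is the assertion of the theorem.

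The main obstacle is the second step — establishing robust Banach property (T) for $G$ with respect to the full class $\mathcal{E}(p_1,p_2)$. It has two genuinely non-trivial components: verifying that $EL_n(\mathbb{F}_q[t_1,\dots,t_m])$ really satisfies the structural hypothesis of the criterion and that the $A_2$-local angle estimates of \cite{ErshovJZ} can be made uniform and explicitly quantitative in $q$; and checking that this Hilbert-space angle bound is strong enough, after the passage to Schatten classes and interpolation à la \cite{Salle}, to cover precisely the range $\frac{1}{p_1}-\frac{1}{p_2}<\frac14$. The numerical threshold $\frac14$ is exactly where that interpolation closes, and ``$q$ sufficiently large'' is what makes the initial unitary angle small enough for the argument to start; the remaining ingredients — residual finiteness, stability of $\mathcal{E}$ under $l_2$ sums, and the passage from robust property (T) to expanders — are routine given the results already in the paper.
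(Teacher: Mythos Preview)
Your proposal is correct and follows the same overall route as the paper: establish robust Banach property~(T) for $G=EL_n(\mathbb{F}_q[t_1,\dots,t_m])$ with respect to the relevant type/cotype class via the Schatten-norm criterion (Theorem~\ref{mSBT via Schatten norms}), then feed this into Proposition~\ref{expander proposition}. Two small deviations are worth flagging. First, the paper does not use the full $A_{n-1}$ root system or quote angle estimates from \cite{ErshovJZ}; instead it writes down an explicit set of $n+m$ generators $K_1,\dots,K_{n+m}$ (Section~5.2.2) for which every $K_{i,j}$ is either $\mathbb{F}_q\times\mathbb{F}_q$ or the Heisenberg group $H_q$, and computes the eigenvalues of $\lambda(k_ik_j-k_{i,j})$ for $H_q$ directly --- this is where the threshold $\tfrac14$ arises, since the $S^r$-norm $\frac{(q^2-q)^{1/r}}{\sqrt{q}}$ tends to $0$ precisely when $r>4$. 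Second, the paper produces the $N_i$ explicitly as principal congruence subgroups for the ideals generated by degree-$i$ monomials rather than invoking Malcev's theorem, and sidesteps your $l_2$-sum verification by working with the class $\overline{Int(\mathcal{T}(p_1,p_2,T_{p_1},C_{p_2}),\ge\theta)}$, which is closed under $l_2$-sums by construction. Your alternatives (Malcev, direct $l_2$-closure of $\mathcal{T}$) are perfectly valid and arguably cleaner; the paper's choices are simply more explicit.
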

A more general statement of this example is given in the last section of this paper.  \\ \\
\textbf{Structure of this paper.} Section 2 includes all the needed background material. Section 3 is devoted to proving a criterion for quick convergence of the averaged projections method, relaying on the concept of an angle between projections. In section 4, we formulate and prove several criteria for robust Banach property (T). In section 5, we give examples of groups with robust Banach property (T) and construct Banach expanders. \\ \\
\textbf{Acknowledgements.} The author would like to thank Mikael de la Salle, Mikhail Ershov and Simeon Reich for reading a previous draft of this paper and adding their insightful comments. Most of the work was done while the author was a visiting assistant professor at the Ohio State University and the author thanks the university for its hospitality.  

\section{Background}
\subsection{Projections in a Banach space}  
Let $X$ be a Banach space. Recall that a projection $P$ is a bounded operator $P \in \mathcal{B} (X)$ such $P^2 =P$.  Note that $\Vert P \Vert \geq 1$ if $P \neq 0$. For subspaces $M, N$ of $X$, we'll say that $P$ is a projection on $M$ along $N$ if $P$ is a projection such that $Im (P) = M$, $ker(P)=N$. 

\subsection{The Banach-Mazur distance}
The Banach-Mazur distance measures a "distance" between finite dimensional Banach spaces:
\begin{definition}
Let $Y_1, Y_2$ be two isomorphic Banach spaces. The (multiplicative) Banach-Mazur distance between $Y_1$ and $Y_2$ is defined as 
$$d_{BM} (Y_1, Y_2) = \inf \lbrace \Vert T \Vert \Vert T^{-1} \Vert : T : Y_1 \rightarrow Y_2 \text{ is a linear isomorphism} \rbrace.$$
\end{definition}
This distance has a multiplicative triangle inequality:
\begin{proposition}
Let $Y_1,Y_2,Y_3$ be isomorphic Banach spaces. Then
$$d_{BM}(Y_1,Y_3) \leq d_{BM}(Y_1,Y_2) d_{BM}(Y_2,Y_3).$$
\end{proposition}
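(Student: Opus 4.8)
The plan is to prove the inequality directly from the definition by composing near-optimal isomorphisms. Fix $\varepsilon > 0$. By definition of the infimum, I would choose a linear isomorphism $T : Y_1 \to Y_2$ with $\Vert T \Vert \, \Vert T^{-1} \Vert \leq d_{BM}(Y_1,Y_2) + \varepsilon$ and a linear isomorphism $S : Y_2 \to Y_3$ with $\Vert S \Vert \, \Vert S^{-1} \Vert \leq d_{BM}(Y_2,Y_3) + \varepsilon$.

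Next I would consider the composition $R = S \circ T : Y_1 \to Y_3$, which is again a linear isomorphism (composition of isomorphisms), with inverse $R^{-1} = T^{-1} \circ S^{-1}$. Using submultiplicativity of the operator norm twice, $\Vert R \Vert \leq \Vert S \Vert \, \Vert T \Vert$ and $\Vert R^{-1} \Vert \leq \Vert T^{-1} \Vert \, \Vert S^{-1} \Vert$. Multiplying these two estimates and regrouping the four factors gives
$$\Vert R \Vert \, \Vert R^{-1} \Vert \leq \bigl( \Vert T \Vert \, \Vert T^{-1} \Vert \bigr) \bigl( \Vert S \Vert \, \Vert S^{-1} \Vert \bigr) \leq \bigl( d_{BM}(Y_1,Y_2) + \varepsilon \bigr) \bigl( d_{BM}(Y_2,Y_3) + \varepsilon \bigr).$$
Since $R$ is an admissible isomorphism from $Y_1$ to $Y_3$, the left-hand side bounds $d_{BM}(Y_1,Y_3)$ from above.

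Finally I would let $\varepsilon \to 0$: the right-hand side tends to $d_{BM}(Y_1,Y_2)\, d_{BM}(Y_2,Y_3)$, which yields the claimed inequality. There is essentially no obstacle here; the only point requiring a little care is that the infimum defining $d_{BM}$ need not be attained, which is precisely why one argues with an arbitrary $\varepsilon > 0$ rather than with exactly optimal maps. (If one restricts to finite-dimensional spaces, as in the stated definition, the infimum is in fact attained by a compactness argument, so one could alternatively pick optimal $T$ and $S$ directly and drop the $\varepsilon$.)
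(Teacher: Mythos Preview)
Your proof is correct and is exactly the standard argument; the paper itself does not give a proof but leaves this proposition as an exercise to the reader. Your use of an $\varepsilon$-approximation to handle the infimum is the appropriate level of care, and your remark about attainment in finite dimensions is accurate though not needed here.
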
 

We leave the proof of the above proposition as an exercise to the reader.

\subsection{Type and cotype}
Let $X$ be a Banach space. For $1< p_1 \leq 2$, $X$ is said to have (Gaussian) type $p_1$, if there is a constant $T_{p_1}$, such that for $g_1,...,g_n$ independent standard Gaussian random variables on a probability space $(\Omega, P)$, we have that for every $x_1,...,x_n \in X$ the following holds:
$$\left( \int_0^1 \left\Vert \sum_{i=1} g_i (\omega) x_i  \right\Vert^2 dP \right)^{\frac{1}{2}} \leq T_{p_1} \left( \sum_{i=1}^n \Vert x_i \Vert^{p_1} \right)^{\frac{1}{p_1}}.$$
The minimal constant $T_{p_1}$ such that this inequality is fulfilled is denoted $T_{p_1} (X)$. \\
For $2 \leq p_2 < \infty$, $X$ is said to have (Gaussian) cotype $p_2$, if there is a constant $C_{p_2}$, such that for $g_1,...,g_n$ independent standard Gaussian random variables on a probability space $(\Omega, P)$, we have that for every $x_1,...,x_n \in X$ the following holds:
$$C_{p_2} \left( \sum_{i=1}^n \Vert x_i \Vert^{p_2} \right)^{\frac{1}{p_2}} \leq \left( \int_0^1 \left\Vert \sum_{i=1} g_i (\omega) x_i  \right\Vert^2 dP \right)^{\frac{1}{2}}  .$$
The minimal constant $C_{p_2}$ such that this inequality is fulfilled is denoted $C_{p_2} (X)$. 
We shall say that a class of Banach spaces, $\mathcal{E}$, is of type $>1$, if there is $p_1 >1$, $K>0$ such that every $X \in \mathcal{E}$ is of type $p_1$ with $K^{(q_1)} (X) \leq K$. 

\begin{remark}
We remark that the Gaussian type and cotype defined above are equivalent to the usual (Rademacher) type and cotype (see \cite{HistoryBanach}[pages 311-312] and reference therein). 
\end{remark}

\begin{remark}
Recall that a Banach space is called superreflexive if it is isomorphic to a uniformly convex Banach space. 
In \cite{PisierXu}, Pisier and Xu showed that for any $p_2 >2$ one can construct a non superreflexive Banach space $X$ with type $2$ and cotype $p_2$.
\end{remark}


\subsection{Vector valued $L^2$ spaces} 
\label{Vector valued spaces section}
Given a measure space $(\Omega, \mu)$ and Banach space $X$, a function $s : \Omega \rightarrow X$ is called simple if it is of the form:
$$s(\omega) = \sum_{i=1}^n \chi_{E_i} (\omega) v_i,$$
where $\lbrace E_1,...,E_n \rbrace$ is a partition of $\Omega$ where each $E_i$ is a measurable set, $\chi_{E_i}$ is the indicator function on $E_i$ and $v_i \in X$. \\
A function $f : \Omega \rightarrow X$ is called Bochner measurable if it is almost everywhere the limit of simple functions. Denote $L^2 (\Omega ; X)$ to be the space of Bochner measurable functions such that 
$$\forall f \in L^2 (\Omega ; X), \Vert f \Vert_{L^2 (\Omega ; X)} = \left( \int_\Omega \Vert f (\omega) \Vert^2_X d \mu (\omega) \right)^{\frac{1}{2}} < \infty.$$ 
Given an operator $T \in B(L^2 (\Omega, \mu))$, we can define $T \otimes id_X \in B(L^2 (\Omega ; X))$ by defining it first on simple functions. We shall need to following facts:
\begin{lemma}\cite{Salle}[Lemma 3.1]
\label{L2 norm stability}
Let $(\Omega, \mu)$ be a measure space, $C\geq 0$ and $T$ a bounded operator on $L^2 (\Omega, \mu)$. The class of Banach spaces $X$, for which $\Vert T \otimes id_X \Vert \leq C$ is stable under quotients, subspaces, $l_2$-sums and ultraproducts.
\end{lemma}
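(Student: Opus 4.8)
The plan is to reduce everything to a statement about simple functions and finitely many vectors. Throughout I read the condition ``$\Vert T \otimes \mathrm{id}_X \Vert \leq C$'' as: for every simple function $s = \sum_{i=1}^{n} \chi_{E_i} v_i$ with the $E_i$ pairwise disjoint of finite measure and $v_i \in X$, one has $\Vert (T \otimes \mathrm{id}_X) s \Vert_{L^2(\Omega;X)} \leq C \Vert s \Vert_{L^2(\Omega;X)}$; since simple functions are dense in $L^2(\Omega;X)$ for every Banach space $X$, this is equivalent to boundedness of the extension with the stated norm bound. The guiding observation is that $T \otimes \mathrm{id}_X$ acts only on the $L^2(\Omega)$–variable: it sends $\sum_i \chi_{E_i} v_i$ to $\sum_i \phi_i v_i$ where $\phi_i := T \chi_{E_i} \in L^2(\Omega)$ depends on $T$ and the $E_i$ only, not on $X$. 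Hence $T \otimes \mathrm{id}_X$ commutes with any bounded linear map applied pointwise in the $X$–variable, and both $\Vert s \Vert_{L^2(\Omega;X)}^{2} = \sum_i \mu(E_i) \Vert v_i \Vert_X^2$ and $\Vert (T \otimes \mathrm{id}_X) s \Vert_{L^2(\Omega;X)}^{2} = \int_\Omega \Vert \sum_i \phi_i(\omega) v_i \Vert_X^2 \, d\mu(\omega)$ are assembled purely from norms of finite linear combinations of the finitely many vectors $v_1, \dots, v_n$. All four stability statements will come from these two facts.

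For \emph{subspaces}: if $Y \subseteq X$ is closed, a $Y$–valued simple function is an $X$–valued simple function, $(T \otimes \mathrm{id}_Y) s = (T \otimes \mathrm{id}_X) s$ has values in $Y$, and the two $L^2$–norms agree on it, so the inequality for $X$ restricts to $Y$. For \emph{quotients}: if $q \colon X \to Z = X/Y$ is the quotient map and $s = \sum_i \chi_{E_i} z_i$ is $Z$–valued, pick $v_i \in X$ with $q v_i = z_i$ and $\Vert v_i \Vert \leq (1+\varepsilon) \Vert z_i \Vert$; then $\tilde{s} := \sum_i \chi_{E_i} v_i$ has $q \circ \tilde{s} = s$, $\Vert \tilde{s} \Vert_{L^2(\Omega;X)} \leq (1+\varepsilon) \Vert s \Vert_{L^2(\Omega;Z)}$, and, post-composition with $q$ being norm-one and commuting with $T \otimes (-)$, $\Vert (T \otimes \mathrm{id}_Z) s \Vert = \Vert q \circ (T \otimes \mathrm{id}_X) \tilde{s} \Vert \leq \Vert (T \otimes \mathrm{id}_X) \tilde{s} \Vert \leq C(1+\varepsilon) \Vert s \Vert$; let $\varepsilon \to 0$. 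For \emph{$\ell_2$–sums}: when $X = (\bigoplus_j X_j)_{\ell_2}$, the map $(f_j)_j \mapsto [\omega \mapsto (f_j(\omega))_j]$ is an isometric isomorphism from $(\bigoplus_j L^2(\Omega;X_j))_{\ell_2}$ onto $L^2(\Omega;X)$ — isometry is Tonelli, surjectivity follows by composing with the coordinate projections, and the only point needing a word is Bochner measurability of the image, which holds because it is the $L^2$–limit, hence an a.e.\ limit of a subsequence, of its finite truncations. Under this identification $T \otimes \mathrm{id}_X$ corresponds to $\bigoplus_j (T \otimes \mathrm{id}_{X_j})$ (check on simple functions coordinatewise, using that $T \otimes (-)$ acts on scalars), so $\Vert T \otimes \mathrm{id}_X \Vert = \sup_j \Vert T \otimes \mathrm{id}_{X_j} \Vert \leq C$.

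The step I expect to be the main obstacle is \emph{ultraproducts}, precisely because the condition as phrased is infinitary — it refers to the fixed infinite-dimensional space $L^2(\Omega)$ and the fixed operator $T$ — whereas an ultraproduct only inherits properties that can be witnessed on finitely many vectors at a time; the remedy will be a finite-dimensional approximation. Let $X = \prod_{\mathcal U} X_k$, fix a simple $s = \sum_{i=1}^{n} \chi_{E_i} v_i$, write $v_i = (v_i^{(k)})_{\mathcal U}$ with bounded representatives, and put $s_k = \sum_i \chi_{E_i} v_i^{(k)} \in L^2(\Omega;X_k)$, so $\Vert (T \otimes \mathrm{id}_{X_k}) s_k \Vert \leq C \Vert s_k \Vert$. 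Approximate $(\phi_1, \dots, \phi_n) = (T\chi_{E_1}, \dots, T\chi_{E_n}) \in L^2(\Omega;\mathbb{C}^n)$ in norm by a simple $\mathbb{C}^n$–valued function $\sum_l \chi_{F_l} a^{(l)}$; with this approximant in place of the $\phi_i$, the integral $\int_\Omega \Vert \sum_i (\cdot)\, v_i \Vert^2$ collapses to the \emph{finite} sum $\sum_l \mu(F_l) \Vert \sum_i a_i^{(l)} v_i \Vert^2$, whose value in the ultraproduct is the limit along $\mathcal U$ of its values in the $X_k$, since the norm in $\prod_{\mathcal U} X_k$ of a fixed finite linear combination is the limit along $\mathcal U$ of the norms of the corresponding combinations in the $X_k$ and there are only finitely many $l$. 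The error incurred in replacing $(\phi_i)_i$ by its approximant is, via the triangle inequality in $L^2(\Omega;X_k)$, uniformly small in $k$: it is bounded by $\sqrt{n}\,(\sup_k \max_i \Vert v_i^{(k)} \Vert)$ times the $L^2$–approximation error. Feeding in the hypotheses $\Vert (T \otimes \mathrm{id}_{X_k}) s_k \Vert \leq C \Vert s_k \Vert$, passing to the limit along $\mathcal U$ (using $\Vert s_k \Vert_{L^2(\Omega;X_k)} \to \Vert s \Vert_{L^2(\Omega;X)}$, again a finite sum of terms convergent along $\mathcal U$), and finally letting the approximation error go to zero gives $\Vert (T \otimes \mathrm{id}_X) s \Vert_{L^2(\Omega;X)} \leq C \Vert s \Vert_{L^2(\Omega;X)}$, as required. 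An alternative route, avoiding the approximation, is to observe that $\omega \mapsto \Vert (T \otimes \mathrm{id}_{X_k}) s_k (\omega) \Vert$ converges pointwise along $\mathcal U$ to $\omega \mapsto \Vert (T \otimes \mathrm{id}_X) s(\omega) \Vert$ and is dominated by the fixed $L^1$ function $(\sum_i |\phi_i| \sup_k \Vert v_i^{(k)} \Vert)^2$, and then to invoke dominated convergence for limits along an ultrafilter; I prefer the approximation argument because it sidesteps that (standard but slightly technical) measure-theoretic lemma.
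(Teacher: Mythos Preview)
The paper does not give its own proof of this lemma: it cites the quotient, subspace and ultraproduct stability from de la Salle's paper and, in the remark immediately following, explicitly leaves the $\ell_2$-sum case ``as an exercise to the reader''. Your proof is correct and supplies complete arguments for all four cases. In particular, your handling of the ultraproduct case --- reducing to a simple function $s=\sum_i\chi_{E_i}v_i$, approximating $(T\chi_{E_1},\dots,T\chi_{E_n})$ in $L^2(\Omega;\mathbb{C}^n)$ by a simple $\mathbb{C}^n$-valued function so that only finitely many fixed linear combinations of the $v_i$ occur, and then passing to the limit along $\mathcal U$ with a uniform error bound in $k$ --- is a clean and standard route; the alternative dominated-convergence-along-an-ultrafilter argument you sketch is also valid. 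Your treatment of the $\ell_2$-sum case, via the isometric identification $L^2(\Omega;(\bigoplus_j X_j)_{\ell_2})\cong(\bigoplus_j L^2(\Omega;X_j))_{\ell_2}$ under which $T\otimes\mathrm{id}_X$ becomes the diagonal operator $\bigoplus_j(T\otimes\mathrm{id}_{X_j})$, is exactly what the paper's ``exercise'' is pointing at.
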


\begin{remark}
The fact that the above class is closed under $l_2$ sums, did not appear in \cite{Salle}[Lemma 3.1] and it is left as an exercise to the reader.
\end{remark}

\begin{lemma}
\label{norm of T otimes id using BM}
Let $(\Omega, \mu)$ be a measure space and $T$ a bounded operator on $L^2 (\Omega, \mu)$. Given two isomorphic Banach spaces $X$, $X'$, we have that 
$$\Vert T \otimes id_{X'} \Vert \leq d_{BM} (X,X') \Vert T \otimes id_{X} \Vert.$$ 
\end{lemma}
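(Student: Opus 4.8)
The plan is to factor $T \otimes id_{X'}$ through $T \otimes id_X$ using an arbitrary linear isomorphism $S : X \to X'$ and the induced ``amplified'' operator $id_{L^2(\Omega,\mu)} \otimes S$, and then to optimize over $S$.

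First I would record that any bounded linear map $S : X \to X'$ induces a bounded operator $\widetilde{S} := id_{L^2(\Omega,\mu)} \otimes S$ from $L^2(\Omega;X)$ to $L^2(\Omega;X')$ with $\Vert \widetilde S \Vert \leq \Vert S \Vert$. This is immediate on a simple function $s = \sum_i \chi_{E_i} v_i$ (with the $E_i$ pairwise disjoint), since
$$\Vert \widetilde S s \Vert_{L^2(\Omega;X')}^2 = \sum_i \mu(E_i)\Vert S v_i\Vert_{X'}^2 \leq \Vert S\Vert^2 \sum_i \mu(E_i)\Vert v_i\Vert_X^2 = \Vert S\Vert^2 \Vert s \Vert_{L^2(\Omega;X)}^2 ,$$
and since simple functions are dense in $L^2(\Omega;X)$, $\widetilde S$ extends uniquely to a bounded operator of norm $\leq \Vert S\Vert$. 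Applying this to $S^{-1}$ as well gives $\widetilde{S^{-1}} = id_{L^2} \otimes S^{-1}$ with $\Vert \widetilde{S^{-1}}\Vert \leq \Vert S^{-1}\Vert$, and one checks on simple functions that $\widetilde{S^{-1}} = (\widetilde S)^{-1}$.

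Next I would verify the operator identity $T \otimes id_{X'} = \widetilde S \circ (T \otimes id_X) \circ \widetilde{S^{-1}}$ on $L^2(\Omega;X')$. It suffices to check it on simple functions, where all operators act by the explicit formulas above: for $s' = \sum_i \chi_{E_i} w_i$ with $w_i \in X'$ one has $\widetilde{S^{-1}} s' = \sum_i \chi_{E_i} S^{-1} w_i$, then $T \otimes id_X$ sends this to $\sum_i (T\chi_{E_i}) S^{-1} w_i$, and finally $\widetilde S$ sends it to $\sum_i (T\chi_{E_i}) w_i = (T \otimes id_{X'}) s'$. Both sides are bounded operators (here the hypothesis that $T \otimes id_X$ is bounded is used) agreeing on the dense subspace of simple functions, hence they coincide. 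Combining the two steps, $\Vert T \otimes id_{X'}\Vert \leq \Vert \widetilde S\Vert\,\Vert T \otimes id_X\Vert\,\Vert \widetilde{S^{-1}}\Vert \leq \Vert S\Vert\,\Vert S^{-1}\Vert\,\Vert T \otimes id_X\Vert$ for every isomorphism $S : X \to X'$, and taking the infimum over all such $S$ yields the claim.

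The only point requiring any care is the passage from simple functions to all of $L^2(\Omega;X)$ --- that $id_{L^2}\otimes S$ is genuinely well defined and bounded there, and that the factorization valid on simple functions extends by density --- but this is routine since all operators involved are bounded and simple functions are dense, so I expect no real obstacle.
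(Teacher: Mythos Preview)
Your proof is correct and follows essentially the same approach as the paper: factor $T\otimes id_{X'}$ as $(id_{L^2}\otimes S)\circ(T\otimes id_X)\circ(id_{L^2}\otimes S^{-1})$, bound the norm by $\Vert S\Vert\,\Vert S^{-1}\Vert\,\Vert T\otimes id_X\Vert$, and take the infimum over isomorphisms $S$. The paper's proof is more terse, omitting the verification on simple functions and the density argument that you spell out, but the idea is identical.
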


\begin{proof}
Let $S:X \rightarrow X'$ be an isomorphism, then 
$$T \otimes id_{X'} = (id_{L^2 (\Omega, \mu)} \otimes S ) \circ (T \otimes id_{X}) \circ (id_{L^2 (\Omega, \mu)} \otimes S^{-1} )$$ 
and therefore 
$$\Vert T \otimes id_{X'} \Vert \leq \Vert id_{L^2 (\Omega, \mu)} \otimes S \Vert \Vert T \otimes id_{X} \Vert \Vert  id_{L^2 (\Omega, \mu)} \otimes S^{-1} \Vert = (\Vert S \Vert \Vert S^{-1} \Vert) \Vert \Vert T \otimes id_{X} \Vert,$$
and the conclusion of the lemma follows.
\end{proof}

\subsection{Interpolation}

Two Banach spaces $X_0, X_1$ form a compatible pair $(X_0,X_1)$ if there is a continuous linear embedding of both $X_0$ and $X_1$ in the same topological vector space. The idea of complex interpolation is that given a compatible pair $(X_0,X_1)$ and a constant $0 < \theta < 1$, there is a method to produce a new Banach space $[X_0, X_1]_\theta$ as a "combination" of $X_0$ and $X_1$. We will not review this method here, and the interested reader can find more information on interpolation in \cite{Salle}[Section 2.4] and reference therein. The only fact we shall use regarding complex interpolation is the following:
\begin{lemma} \cite{Salle}[Lemma 3.1]
\label{interpolation fact}
Given a compatible pair $(X_0,X_1)$, a measure space $(\Omega,\mu)$ and an operator $T \in B(L^2 (\Omega,\mu))$, we have for every $0 \leq \theta \leq 1$ that
$$\Vert T \otimes id_{[X_0, X_1]_\theta} \Vert_{B(L^2 (\Omega ; [X_0, X_1]_\theta))} \leq \Vert T \otimes id_{X_0} \Vert_{B(L^2 (\Omega ; X_0))}^{1-\theta} \Vert T \otimes id_{X_1} \Vert_{B(L^2 (\Omega ; X_1))}^{\theta}.$$
\end{lemma}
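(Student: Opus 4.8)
The plan is to deduce this from the classical complex interpolation theorem for linear operators, after identifying the vector-valued $L^2$ spaces along the interpolation scale. First I would use the standard isometric identification
$$[L^2(\Omega;X_0),\, L^2(\Omega;X_1)]_\theta = L^2(\Omega;\, [X_0,X_1]_\theta),$$
which is a classical fact of interpolation theory (see \cite{Salle}[Section 2.4] and the references therein). Here the integrability exponent is unchanged precisely because $\frac{1-\theta}{2}+\frac{\theta}{2}=\frac12$, so interpolating $L^2$ with $L^2$ again produces $L^2$, now with the interpolated Banach space as the target.

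Next I would verify that the three operators $T\otimes id_{X_0}$, $T\otimes id_{X_1}$ and $T\otimes id_{[X_0,X_1]_\theta}$ are compatible, i.e.\ that they are all restrictions of a single linear map on a common dense domain. The natural choice of domain is the space of simple functions taking values in $X_0\cap X_1$: such functions are dense in each of $L^2(\Omega;X_0)$, $L^2(\Omega;X_1)$ and $L^2(\Omega;[X_0,X_1]_\theta)$, and on them each of the three operators acts by the same rule, namely $T$ in the scalar variable and the identity in the vector variable. Consequently $T\otimes id_{[X_0,X_1]_\theta}$ is exactly the operator obtained by interpolating the couple $(T\otimes id_{X_0},\, T\otimes id_{X_1})$.

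With these two identifications in hand, the conclusion is immediate from the interpolation theorem for operators: a linear map that is bounded on $X_0$ with norm $M_0$ and bounded on $X_1$ with norm $M_1$ (on a compatible couple) is bounded on $[X_0,X_1]_\theta$ with norm at most $M_0^{1-\theta}M_1^{\theta}$, by log-convexity of operator norms along the complex interpolation scale. Applying this to $S=T\otimes id$ acting on the couple $(L^2(\Omega;X_0),\, L^2(\Omega;X_1))$ gives precisely the asserted bound.

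The main obstacle is not any substantial estimate but the bookkeeping in the compatibility step: one must confirm that $T\otimes id_X$ is genuinely well defined and bounded on $L^2(\Omega;X)$ for each $X$ in question — this uses boundedness of $T$ on $L^2(\Omega,\mu)$ together with an approximation argument through simple functions — and that the chosen dense subspace is simultaneously dense in all three vector-valued $L^2$ spaces, with the three operators agreeing there. All of these points are routine once the interpolation identification is granted, so the lemma follows with no further work.
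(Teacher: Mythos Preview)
Your argument is correct and is exactly the standard route: identify $[L^2(\Omega;X_0),L^2(\Omega;X_1)]_\theta$ with $L^2(\Omega;[X_0,X_1]_\theta)$ and then invoke the operator interpolation theorem. The paper itself does not prove this lemma at all --- it is quoted directly from \cite{Salle}[Lemma 3.1] --- so there is nothing to compare against; your sketch supplies the proof the paper omits. One small caveat: boundedness of $T$ on $L^2(\Omega,\mu)$ does \emph{not} by itself guarantee that $T\otimes id_X$ is bounded on $L^2(\Omega;X)$ for arbitrary $X$, so your parenthetical remark to that effect is slightly misleading; the correct reading of the lemma is that the inequality is vacuous when the right-hand side is infinite, and otherwise the boundedness on the endpoint spaces is part of the hypothesis feeding into the interpolation theorem.
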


\subsection{$\theta$-Hilbertian Banach spaces} 
The following definition is due to Pisier in \cite{Pisier}: a Banach space $X$ is called strictly $\theta$-Hilbertian for $0 < \theta \leq 1$, if there is a compatible pair $(X_0,X_1)$ such that $X_1$ is a Hilbert space such that $X = [X_0, X_1]_\theta$. Examples of are $L^p$ space and non-commutative $L^p$ spaces when  (in these cases $\theta = \frac{2}{p}$ if $2 \leq  p  < \infty $ and $\theta = 2 - \frac{2}{p}$ if $1 < p \leq 2$). In \cite{Pisier}, Pisier showed that any superreflexive Banach lattice is strictly $\theta$-Hilbertian and conjectured that any superreflexive Banach space is a subspace of a quotient of a strictly $\theta$-Hilbertian Banach space for some $\theta >0$.

\subsection{Group representations in a Banach space} 
Let $G$ be a locally compact group and $X$ a Banach space. Let $\pi$ be a representation $\pi :  G \rightarrow \mathcal{B} (X)$. Throughout this paper we shall always assume $\pi$ is continuous with respect to the strong operator topology without explicitly mentioning it. \\
Denote by $C_c (G)$ the groups algebra of compactly supported continuous functions on $G$ with convolution. For any $f \in C_c (G)$ we can define $\pi (f) \in \mathcal{B} (X)$ as
$$\forall v \in X, \pi (f). v = \int_G f(g) \pi(g).v  d\mu (g),$$
where the above integral is the Bochner integral with respect to the (left) Haar measure $\mu$ of $G$. \\
Recall that given $\pi$ one can define the following representations:
\begin{enumerate}
\item The complex conjugation of $\pi$, denoted $\overline{\pi} : G \rightarrow \mathcal{B} (\overline{X})$ is defined as 
$$\overline{\pi} (g). \overline{v} = \overline{\pi (g). v}, \forall g \in G, \overline{v} \in \overline{X}.$$  
\item The dual representation $\pi^* : G \rightarrow \mathcal{B} (X^*)$ is defined as 
$$\langle v, \pi^* (g) u  \rangle =  \langle \pi (g^{-1}) .v, u  \rangle, \forall g \in G, v \in X, u \in X^*.$$ 
\end{enumerate}

Next, we'll restrict ourselves to the case of compact groups. Let $K$ be a compact group with a Haar measure $\mu$ and let $C_c (K) = C(K)$ defined as above. Let $X$ be Banach space and let $\pi$ be a representation of $K$ on $X$ that is continuous with respect to the strong operator topology. We shall show that for every $f \in C_c (K)$ we can bound the norm of $\pi (f)$ using the norm of $\lambda \otimes id_X \in B(L^2 (K ;X))$ (the definition of $L^2 (K; X)$ is given in subsection \ref{Vector valued spaces section} above).
We shall start with the following result of de la Salle that deals with the case in which $\pi$ is an isometric representation:
\begin{proposition}\cite{Salle}[Proposition 2.7]
\label{bounding the norm of pi(f) - Salle}
Let $\pi$ be an isometric representation of a compact group $K$ on a Banach space $X$. Then for any real function $f \in C_c (K)$ we have that
$$\Vert \pi (f) \Vert_{B(X)} \leq \Vert (\lambda \otimes id_X) (f) \Vert_{B(L^2 (K ; X))},$$
where $\lambda$ is the left regular representation of $K$. 
\end{proposition}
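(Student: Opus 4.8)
\textit{Proof proposal.} The plan is to realize $X$ isometrically as a subspace of $L^2(K;X)$ via the orbit map associated to $\pi$, and to show that under this embedding $\pi(f)$ becomes the restriction of $(\lambda \otimes id_X)(f)$; the claimed norm bound is then immediate. Normalize the Haar measure $\mu$ of $K$ so that $\mu(K)=1$. For $v \in X$ define $\Phi_v : K \to X$ by $\Phi_v(h) = \pi(h^{-1}).v$. Since $\pi$ is strongly continuous and inversion is continuous, $\Phi_v$ is continuous on the compact set $K$, hence Bochner measurable (its range is compact, in particular separable); and since $\pi$ is isometric, $\Vert \Phi_v(h)\Vert_X = \Vert v\Vert_X$ for every $h$, so $\Vert \Phi_v\Vert_{L^2(K;X)} = \Vert v\Vert_X$. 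Thus $\iota : X \to L^2(K;X)$, $\iota(v) = \Phi_v$, is a linear isometric embedding.

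The heart of the argument is the intertwining identity $(\lambda \otimes id_X)(f)\circ \iota = \iota \circ \pi(f)$. Recall that $(\lambda(g)F)(h) = F(g^{-1}h)$, so that after integrating against $f$ one has $((\lambda \otimes id_X)(f)F)(h) = \int_K f(g)\, F(g^{-1}h)\, d\mu(g)$ for $F \in L^2(K;X)$. Substituting $F = \Phi_v$ and using $\Phi_v(g^{-1}h) = \pi(h^{-1}g).v = \pi(h^{-1}).\pi(g).v$, together with the fact that the bounded operator $\pi(h^{-1})$ may be pulled out of the Bochner integral, gives
$$((\lambda \otimes id_X)(f)\Phi_v)(h) = \pi(h^{-1}).\Big(\int_K f(g)\,\pi(g).v\, d\mu(g)\Big) = \pi(h^{-1}).\pi(f).v = \Phi_{\pi(f).v}(h),$$
that is, $(\lambda \otimes id_X)(f)\,\iota(v) = \iota(\pi(f).v)$. (Along the way one checks the routine points that $\Phi_v \in L^2(K;X)$, so the convolution integral is meaningful, and that the pointwise-in-$h$ and Bochner interpretations of the integral are compatible — standard in the $C_c(K)$ setting.)

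Combining the two steps, for every $v \in X$,
$$\Vert \pi(f).v\Vert_X = \Vert \iota(\pi(f).v)\Vert_{L^2(K;X)} = \Vert (\lambda \otimes id_X)(f)\,\iota(v)\Vert_{L^2(K;X)} \leq \Vert (\lambda \otimes id_X)(f)\Vert_{B(L^2(K;X))}\,\Vert v\Vert_X,$$
and taking the supremum over $\Vert v\Vert_X \le 1$ yields the proposition. The only place where care is genuinely required is the bookkeeping of conventions — the left regular representation versus right, left Haar measure, and the inverses in the definition of $\Phi_v$ — since an inconsistent choice destroys the intertwining identity; the measure-theoretic interchanges are the usual ones and the hypothesis that $f$ be real is not actually used for this inequality, though it is harmless to retain it.
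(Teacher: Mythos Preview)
Your argument is correct: the orbit map $\iota(v)(h)=\pi(h^{-1}).v$ gives an isometric embedding of $X$ into $L^2(K;X)$ (with normalized Haar measure), and the computation $(\lambda\otimes id_X)(f)\,\iota(v)=\iota(\pi(f).v)$ is exactly the intertwining needed to deduce the inequality. Your remark that the realness of $f$ plays no role in this bound is also correct.

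As for comparison with the paper: note that the paper does not give its own proof of this proposition. It is quoted verbatim as \cite{Salle}[Proposition~2.7] and used as a black box (the only in-paper work is the renorming argument in Corollary~\ref{bounding the norm of pi(f) - corollary} that reduces the non-isometric case to this one). Your orbit-map argument is in fact the standard proof of de~la~Salle's proposition, so there is nothing to contrast on the level of method; you have simply filled in what the paper chose to cite.
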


\begin{remark}
The above proposition in \cite{Salle} in phrased in the language of signed Borel measures on $K$ and not $f \in C_c (K)$, but this is equivalent, since $f$ can be thought of as the density function of a signed Borel measure.
\end{remark}

We can use the above proposition to bound the norm of $\pi (f)$  in the more general case in which $\sup_{g \in K} \Vert \pi (g) \Vert < \infty$:

\begin{corollary}
\label{bounding the norm of pi(f) - corollary}
Let $\pi$ be a representation of a compact group $K$ on a Banach space $X$. Assume that $\sup_{g \in K} \Vert \pi (g) \Vert < \infty$, then for any real function $f \in C_c (G)$ we have that
$$\Vert \pi (f) \Vert_{B(X)} \leq \left( \sup_{g \in K} \Vert \pi (g) \Vert \right)^2  \Vert (\lambda \otimes id_X) (f) \Vert_{B(L^2 (K ; X))},$$
where $\lambda$ is the left regular representation of $G$. 
\end{corollary}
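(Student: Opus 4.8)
The plan is to reduce the statement to the isometric case already treated in Proposition~\ref{bounding the norm of pi(f) - Salle} by replacing the norm of $X$ with an equivalent, $\pi$-invariant one. Set $C = \sup_{g \in K} \Vert \pi(g) \Vert_{B(X)}$, which is finite by hypothesis, and define on $X$ the new norm
$$\Vert v \Vert' = \sup_{g \in K} \Vert \pi(g) v \Vert_X .$$
Taking $g$ to be the identity element of $K$ shows $\Vert v \Vert_X \le \Vert v \Vert' \le C \Vert v \Vert_X$, so $\Vert \cdot \Vert'$ is indeed a norm equivalent to the original one; in particular $X' := (X, \Vert \cdot \Vert')$ is a Banach space, the strong operator topology on it is unchanged, so $\pi$ is still a strongly continuous representation on $X'$. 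The gain is that $\pi$ is now \emph{isometric} on $X'$: for $h \in K$, using that $g \mapsto gh$ is a bijection of $K$,
$$\Vert \pi(h) v \Vert' = \sup_{g \in K} \Vert \pi(g)\pi(h) v \Vert_X = \sup_{g \in K} \Vert \pi(gh) v \Vert_X = \Vert v \Vert' .$$

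Next I would apply Proposition~\ref{bounding the norm of pi(f) - Salle} to the isometric representation $\pi$ on $X'$, which for the real function $f$ gives $\Vert \pi(f) \Vert_{B(X')} \le \Vert (\lambda \otimes id_{X'})(f) \Vert_{B(L^2(K;X'))}$. Since re-norming changes neither the underlying vector spaces nor the linear maps, $\pi(f)$ (defined by a Bochner integral) is the same operator on $X$ and on $X'$, and $(\lambda \otimes id)(f)$ is the same operator on $L^2(K;X)$ and on $L^2(K;X')$; only the norms differ, and they differ by the factor $C$ on each side. Concretely, from $\Vert \cdot \Vert_X \le \Vert \cdot \Vert' \le C \Vert \cdot \Vert_X$ one gets $\Vert \pi(f) \Vert_{B(X)} \le C \, \Vert \pi(f) \Vert_{B(X')}$ directly. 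For the other side, the identity map witnesses $d_{BM}(X, X') \le C$, and the pointwise comparison of norms induces the same comparison between $L^2(K;X)$ and $L^2(K;X')$, so Lemma~\ref{norm of T otimes id using BM} (applied to $T = \lambda(f)$, noting $(\lambda \otimes id_Y)(f) = \lambda(f) \otimes id_Y$) yields
$$\Vert (\lambda \otimes id_{X'})(f) \Vert_{B(L^2(K;X'))} \le C \, \Vert (\lambda \otimes id_{X})(f) \Vert_{B(L^2(K;X))} .$$
Chaining the three inequalities produces the asserted factor $C^2 = \bigl( \sup_{g \in K} \Vert \pi(g) \Vert \bigr)^2$.

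I do not expect a genuine obstacle here; the points requiring a little care are purely bookkeeping: checking that $\Vert \cdot \Vert'$ is really a norm (not just a seminorm) and is finite, verifying that it makes $\pi$ isometric rather than merely bounded — this is precisely where the group structure of $K$ enters — and observing that passing to the equivalent norm does not alter the operators $\pi(f)$ and $(\lambda \otimes id)(f)$, so that Proposition~\ref{bounding the norm of pi(f) - Salle} can be invoked verbatim on $X'$. If one prefers not to cite Lemma~\ref{norm of T otimes id using BM}, the $L^2$-side estimate can be obtained by hand exactly as in that lemma's proof, factoring $(\lambda \otimes id_{X'})(f)$ through the identity maps $L^2(K;X) \to L^2(K;X') \to L^2(K;X)$.
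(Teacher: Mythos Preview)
Your proposal is correct and follows essentially the same route as the paper: define the equivalent norm $\Vert v \Vert' = \sup_{g\in K}\Vert \pi(g)v\Vert_X$ to make $\pi$ isometric, apply Proposition~\ref{bounding the norm of pi(f) - Salle} on $X'$, and then use the Banach--Mazur comparison (Lemma~\ref{norm of T otimes id using BM}) together with $\Vert\pi(f)\Vert_{B(X)}\le C\Vert\pi(f)\Vert_{B(X')}$ to pick up the two factors of $C$. Your write-up in fact spells out a couple of verifications (that $\pi$ really becomes isometric, and the norm equivalence bounds) that the paper leaves implicit.
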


\begin{proof}
Define the following norm $\Vert . \Vert'$ on $X$:
$$\forall v \in X, \Vert v \Vert' = \sup_{g \in K} \Vert \pi (g).v \Vert_X.$$
Denote $X'$ to be the Banach space $X' = (X, \Vert . \Vert')$, then $\pi$ is an isometric representation on $X'$ and $d_{BM} (X,X') \leq \sup_{g \in K} \Vert \pi (g) \Vert$. By proposition \ref{bounding the norm of pi(f) - Salle}, we have that 
$$\Vert \pi (f) \Vert_{B(X')} \leq \Vert (\lambda \otimes id_{X'}) (f) \Vert_{B(L^2 (K ; X'))}.$$
By lemma \ref{norm of T otimes id using BM}, we have that
$$\Vert (\lambda \otimes id_{X'}) (f) \Vert_{B(L^2 (K ; X'))} \leq   \left( \sup_{g \in K} \Vert \pi (g) \Vert \right) \Vert (\lambda \otimes id_X) (f) \Vert_{B(L^2 (K ; X))} .$$
One can also check that by the definition of $\Vert . \Vert'$, we have that 
$$\Vert \pi (f) \Vert_{B(X)}  \leq   \left( \sup_{g \in K} \Vert \pi (g) \Vert \right) \Vert \pi (f) \Vert_{B(X')},$$
and the conclusion of the corollary follows.
\end{proof}



\subsection{Spectra of bipartite graphs}

Let $\mathcal{G} = (V,E)$ be a graph without loops, multiple edges or isolated vertices. Recall the following definitions:
\begin{definition}
The normalized Laplacian on $(V,E)$ as the operator $\Delta$ acting on $L^2 (V)$ as 
$$\Delta \psi (v) = \psi (v) - \frac{1}{d(v)} \sum_{u \in V, \lbrace u, v \rbrace \in E} \psi (u) ,$$
where $d(v)$ is the valency of $v$. $\Delta$ is positive operator with respect to the following inner product on $L^2 (V)$:
$$\langle \psi_1, \psi_2 \rangle = \sum_{v \in V} d(v) \psi_1 (v) \psi_2 (v).$$

If $\mathcal{G}$ is connected, then $0$ is an eigenvalue of multiplicity $1$ of $\Delta$ and the smallest positive eigenvalue of $\Delta$ is called the spectral gap of the Laplacian.
\end{definition}

\begin{definition}
The graph $\mathcal{G}$ is called bipartite if there are non empty disjoint sets $V_1 , V_2 \subset V$ such that $V = V_1 \cup V_2$ and $E \subseteq \lbrace \lbrace v_1 , v_2 \rbrace : v_1 \in V_1, v_2 \in V_2 \rbrace$, i.e., there are no edges between two vertices of $V_1$ or between two vertices of $V_2$. A bipartite graph is called semi-regular if for every $i=1,2$ and every $v, v' \in V_i$ we have $d(v) =d(v')$. 
\end{definition}

The following proposition is well known and the proof is a simple computation which is left for the reader:
\begin{proposition}
\label{bipartite graph spectrum proposition 1}
Let $\mathcal{G} = (V,E)$ be a bipartite graph with $V_1,V_2 \subset V$ as in the definition above. If $\psi \in L^2 (V)$ is an eigenvector of Laplacian $\Delta$ with eigenvalue $\eta$, then 
$$\psi' (v) = \begin{cases}
\psi(v) & v \in V_1 \\
- \psi(v) & v \in V_2
\end{cases},$$
is an eigenvector of $\Delta$ with eigenvalue $2-\eta$.
\end{proposition}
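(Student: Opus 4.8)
The plan is to recast the statement in operator-theoretic terms and reduce it to a one-line sign bookkeeping argument. First I would introduce the averaging operator $A$ on $L^2(V)$ defined by $A\psi(v) = \frac{1}{d(v)} \sum_{u \in V, \{u,v\} \in E} \psi(u)$, so that $\Delta = I - A$. With this notation, the hypothesis $\Delta \psi = \eta \psi$ is equivalent to $A\psi = (1-\eta)\psi$, and the desired conclusion $\Delta \psi' = (2-\eta)\psi'$ is equivalent to $A\psi' = -(1-\eta)\psi'$. So the whole proposition amounts to showing that the ``sign-flip'' vector $\psi'$ is an eigenvector of $A$ with eigenvalue $-(1-\eta)$.

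Next I would verify this by splitting into the two cases $v \in V_1$ and $v \in V_2$, using that bipartiteness forces every neighbour of a vertex in $V_1$ to lie in $V_2$ and vice versa. For $v \in V_1$ we have $\psi'(v) = \psi(v)$ while $\psi'(u) = -\psi(u)$ for every neighbour $u$, hence $A\psi'(v) = -A\psi(v) = -(1-\eta)\psi(v) = -(1-\eta)\psi'(v)$. For $v \in V_2$ we have $\psi'(v) = -\psi(v)$ while $\psi'(u) = \psi(u)$ for every neighbour $u$, hence $A\psi'(v) = A\psi(v) = (1-\eta)\psi(v) = -(1-\eta)\psi'(v)$. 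In both cases the claimed eigenvalue relation holds pointwise, so $A\psi' = -(1-\eta)\psi'$ and therefore $\Delta\psi' = \psi' - A\psi' = (2-\eta)\psi'$.

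Finally I would record the two routine sanity checks: that $\psi' \in L^2(V)$ (immediate, since $|\psi'(v)| = |\psi(v)|$ for all $v$, so $\psi'$ has the same norm as $\psi$ under the weighted inner product), and that $\psi' \neq 0$ (immediate from $\psi \neq 0$, as the sign-flip map is a bijection on functions), so that $\psi'$ is a genuine eigenvector rather than the zero vector.

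I do not anticipate any real obstacle here; the only thing to be careful about is keeping the signs straight in the two cases, and making sure the bipartite hypothesis is invoked exactly where the neighbours of $v$ all switch sign. Everything else is bookkeeping, which is presumably why the statement is left to the reader in the source.
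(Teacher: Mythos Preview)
Your proof is correct and is exactly the ``simple computation'' the paper has in mind; the paper in fact omits the proof entirely and leaves it to the reader, so there is nothing further to compare.
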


\begin{proposition}
\label{bipartite graph spectrum proposition 2}
Let $\mathcal{G} = (V,E)$ be a bipartite graph with $V_1,V_2 \subset V$ as in the definition above. If $\vert V_2 \vert < \vert V_1 \vert$, then the space of eigenfunctions of $\Delta$ with the eigenvalue $1$ has a subspace of functions supported on $V_1$ of dimension $\geq \vert V_1 \vert - \vert V_2 \vert$. 
\end{proposition}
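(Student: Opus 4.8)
The plan is to reduce the claim to elementary linear algebra. First I would write down explicitly what it means for a finitely supported $\psi \in L^2(V)$ to be an eigenfunction of $\Delta$ with eigenvalue $1$: substituting $\eta = 1$ into $\Delta \psi = \eta \psi$ and using the defining formula for $\Delta$, the term $\psi(v)$ cancels on both sides, leaving the condition
$$\sum_{u \in V,\, \{u,v\} \in E} \psi(u) = 0 \quad \text{for every } v \in V.$$
Note that this characterization does not involve the valencies $d(v)$ or the weighted inner product at all; it is purely combinatorial.

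Next I would restrict attention to functions $\psi$ supported on $V_1$. Since $\mathcal{G}$ is bipartite, every neighbour of a vertex $v \in V_1$ lies in $V_2$, where such a $\psi$ vanishes; hence the equation above is automatically satisfied at every $v \in V_1$. The only genuine constraints come from the vertices of $V_2$: for each $v \in V_2$ one needs $\sum_{u \in V_1,\, \{u,v\} \in E} \psi(u) = 0$.

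Therefore the space of eigenfunctions of $\Delta$ with eigenvalue $1$ that are supported on $V_1$ is exactly the kernel of the linear map sending a function $\psi$ supported on $V_1$ to the function on $V_2$ defined by $v \mapsto \sum_{u \in V_1,\, \{u,v\} \in E} \psi(u)$. Its domain is the $|V_1|$-dimensional space of functions supported on $V_1$ and its codomain has dimension $|V_2|$, so by the rank–nullity theorem the kernel has dimension at least $|V_1| - |V_2|$, which is precisely the asserted bound (and positive by hypothesis).

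There is essentially no obstacle here; the only point that deserves a moment's care is the cancellation in the first step, which is what makes the eigenvalue-$1$ condition independent of the vertex weights and of the inner product. Once that is observed, the proposition follows immediately by counting dimensions.
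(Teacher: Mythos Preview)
Your proof is correct and follows essentially the same approach as the paper: both identify the relevant subspace as those functions vanishing on $V_2$ and satisfying the $|V_2|$ neighbour-sum constraints, then conclude by a dimension count. The paper phrases this as $2|V_2|$ linear equations on $|V_1|+|V_2|$ variables, whereas you first restrict to the $|V_1|$-dimensional space of functions supported on $V_1$ and invoke rank--nullity, but the argument is the same.
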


\begin{proof}
Define 
$$W = \lbrace \psi \in L^2 (V) : \forall v \in V_2, \psi(v) = 0 \text{ and } \sum_{u \in V_1, \lbrace v ,u \rbrace \in E} \psi(u) =0\rbrace.$$
We conclude by noticing that for every $\psi \in W$, $\Delta \psi = \psi$ and that $W$ is a subspace of $L^2 (V)$ defined by $2 \vert V_2 \vert$ linear equations on $\vert V_1 \vert + \vert V_2 \vert$ variables and therefore the dimension of $W$ is at least $\vert V_1 \vert - \vert V_2 \vert$.     
\end{proof}

\begin{proposition}
\label{bipartite graph spectrum proposition 3}
Let $\mathcal{G} = (V,E)$ be a semi-regular bipartite graph and let $\psi \in L^2 (V)$ be an eigenfunction of $\Delta$ with an eigenvalue $0 < \eta <2$, then for $i=1,2$ we have 
$$\sum_{v \in V_i} \psi(v) = 0.$$
\end{proposition}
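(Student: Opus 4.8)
The plan is to turn the eigenvalue equation $\Delta\psi=\eta\psi$ into a two-by-two linear system relating the two partial sums $S_i:=\sum_{v\in V_i}\psi(v)$, and then to show that this system only has the trivial solution when $0<\eta<2$. First I would use semi-regularity to fix notation: let $d_i$ be the common valency of all vertices of $V_i$, and note that, since the graph has no isolated vertices, $d_1,d_2\geq 1$. Rewriting the definition of $\Delta$, the condition $\Delta\psi=\eta\psi$ is equivalent to
$$\sum_{u\in V,\ \{u,v\}\in E}\psi(u)=(1-\eta)\,d(v)\,\psi(v)\qquad\text{for every }v\in V.$$

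Next I would sum this identity over all $v\in V_1$. On the right-hand side this gives $(1-\eta)\,d_1\,S_1$. On the left-hand side, because the graph is bipartite every neighbour of a vertex of $V_1$ lies in $V_2$, and each $u\in V_2$ is counted exactly once for each of its neighbours in $V_1$, i.e.\ exactly $d(u)=d_2$ times; hence the left-hand side equals $d_2\,S_2$. This yields $d_2 S_2=(1-\eta)\,d_1 S_1$, and by the same computation summed over $V_2$ we get $d_1 S_1=(1-\eta)\,d_2 S_2$.

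Finally I would combine the two relations: substituting the second into the first gives $d_1 S_1=(1-\eta)^2 d_1 S_1$, i.e.\ $d_1 S_1\bigl(1-(1-\eta)^2\bigr)=d_1 S_1\,\eta(2-\eta)=0$. Since $d_1\geq 1$ and $0<\eta<2$, the factor $\eta(2-\eta)$ is nonzero, forcing $S_1=0$; feeding this back into $d_2 S_2=(1-\eta)\,d_1 S_1$ gives $S_2=0$ as well, which is the claim. I do not expect any genuine obstacle here; the only points needing a little care are the double-counting step — one must invoke regularity of the \emph{opposite} side, $V_2$, when summing over $V_1$, and the bipartiteness to know all neighbours land there — and a sanity check of the degenerate case $\eta=1$, in which the two relations immediately read $d_2S_2=0$ and $d_1S_1=0$ and give the conclusion directly without the substitution.
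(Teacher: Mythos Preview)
Your proof is correct and takes a genuinely different route from the paper's. The paper argues via self-adjointness of $\Delta$: it observes that $\chi_V$ is an eigenfunction with eigenvalue $0$ and (using the earlier proposition on the symmetry $\eta\leftrightarrow 2-\eta$) that $\chi_{V_1}-\chi_{V_2}$ is an eigenfunction with eigenvalue $2$, so orthogonality to $\psi$ in the weighted inner product gives the pair of equations $d_1 S_1 + d_2 S_2 = 0$ and $d_1 S_1 - d_2 S_2 = 0$, whence $S_1=S_2=0$. Your approach instead sums the eigenvalue equation directly over each side, obtaining $d_2 S_2=(1-\eta)d_1 S_1$ and $d_1 S_1=(1-\eta)d_2 S_2$, and then eliminates. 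The paper's argument is cleaner once the spectral framework (self-adjointness, the $\eta\leftrightarrow 2-\eta$ proposition) is in place; yours is more elementary and self-contained, needing neither the self-adjointness of $\Delta$ nor the prior proposition, and the double-counting step where semi-regularity enters is made pleasantly explicit.
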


\begin{proof}
Let $\chi_{V}, \chi_{V_1}, \chi_{V_2}$ be the indicator functions of $V,V_1,V_2$. By definition of $\Delta$ we have that $\Delta \chi_V = 0$. Therefore by proposition \ref{bipartite graph spectrum proposition 1}, we get that $\Delta (\chi_{V_1} - \chi_{V_2}) = 2 (\chi_{V_1} - \chi_{V_2})$. From the fact that $\Delta$ is self-adjoint we deduce that $\langle \psi, \chi_V \rangle = \langle \psi, \chi_{V_1} - \chi_{V_2} \rangle =0$. Denote by $d_{V_1},d_{V_2}$ the valency of all the vertices in $V_1$ and $V_2$ respectively. Then 
$$0 = \langle \psi, \chi_V \rangle = d_{V_1} \sum_{v \in V_1} \psi(v) + d_{V_2} \sum_{v \in V_2} \psi(v),$$
$$0 = \langle \psi, \chi_{V_1} - \chi_{V_2} \rangle = d_{V_1} \sum_{v \in V_1} \psi(v) - d_{V_2} \sum_{v \in V_2} \psi(v),$$
and the proposition follows.
\end{proof}

\section{Averaged projections in a Banach space}

Given a family of projections $P_1,...,P_N$ on  $M_1,...,M_N$ in $X$, there is a well known algorithm of finding a projection on $\cap_{k=1}^N M_k$, which is known as the method of alternating projections. This algorithm can be stated (in full generality) as follows: let $\mathcal{S} (P_1,...,P_N)$  be the convex hull of the semigroup
consisting of all products with factors from $\lbrace P_1,...,P_N \rbrace$. Let $T \in \mathcal{S} (P_1,...,P_N)$, such that for every $k=1,...,n$, $P_k$ appears (in some product) in the decomposition of $T$ in $\mathcal{S} (P_1,...,P_N)$. Then under some assumptions on $X$ (for instance, if $X$ is uniformly convex and $\Vert P_k \Vert = 1$ for every $k$ as in \cite{BruckReich}), we have that the sequence $T^n$ converges in the strong operator topology to an operator $T^\infty$ that is a projection on $\cap_{k=1}^N M_k$. Below we shall restrict ourselves to a special case of the general alternating projections method described above in which $T= \frac{P_1+...+P_N}{N}$. This case is called the averaged projections method.\\
The rate of converges of this method is either "very slow" or "very fast". To make this statement precise we recall the following definitions and results from \cite{BadeaGrivauxM}:
\begin{definition}[Rates of convergence]
Let $X$ be a Banach space and let $(T_n)$ be a sequence of bounded operators on $X$. Assume that $(T_n)$ converges in the strong operator topology to $T^\infty \in \mathcal{B} (X)$. Then we say that:
\begin{enumerate}
\item Quick uniform convergence condition holds (abbreviated: (QUC) holds) if there are constants $C\geq 0$, $0 \leq r  <1$ such that $\Vert T^\infty - T_n \Vert \leq C r^n.$
\item Arbitrarily slow convergence condition holds (abbreviated: (ASC) holds) if for every sequence of positive numbers $(a_n)$ such that $\lim_{n \rightarrow \infty} (a_n)=0$ and for every $\varepsilon >0$, there exists $v \in X$ such that $\Vert v \Vert < \sup_n a_n + \varepsilon$ and for every $n$, 
$\Vert T^\infty v - T_n v \Vert \geq a_n.$
\end{enumerate}
\end{definition}

\begin{remark}
Note that in the above definition if (ASC) holds, then in particular $(T_n)$ does not converge to $T^\infty$ in the uniform operator topology. Indeed, for any arbitrary $n_0$, one can choose a positive sequence $(a_n)$ such that $\lim_{n \rightarrow \infty} (a_n)=0$, $\sup_n a_n <1$ and $a_{n_0} = \frac{1}{2}$. Therefore, by (ASC), there is $v \in X$, such that $\Vert v \Vert \leq 1$ and $\Vert T^\infty v - T_{n_0} v \Vert \geq \frac{1}{2}$. This is true for any $n_0$ and therefore we get that for any $n$, $\Vert T^\infty - T_n \Vert \geq \frac{1}{2}$.
\end{remark}

The conditions (QUC) and (ASC) represent the two extreme cases of convergence rates of sequences of bounded operators (assuming convergence in the strong operator topology). The theorem below states that in the case that $T_n$ is defined as $T_n = T^n$ for some fixed $T \in \mathcal{B} (X)$, these are the only possibilities of convergence:

\begin{theorem}[Dichotomy theorem]\cite{BadeaGrivauxM}[Theorem 2.1]
\label{Dichotomy thm}
Let $X$ be a Banach space and let $T \in \mathcal{B} (X)$. If $T^n$ converges in the strong operator topology to $T^\infty \in \mathcal{B} (X)$ then either (QUC) holds or (ASC) holds.  
\end{theorem}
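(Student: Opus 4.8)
The strategy is to reduce to the case $T^{\infty}=0$, settle one alternative with the spectral radius formula, and obtain the other by a gliding hump (lethargy) construction.

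\emph{Reduction.} Letting $n\to\infty$ in $T\cdot T^{n}$ and in $T^{n}\cdot T$ (using continuity of $T$) gives $TT^{\infty}=T^{\infty}T=T^{\infty}$ and $(T^{\infty})^{2}=T^{\infty}$, so $T^{\infty}$ is an idempotent commuting with $T$. Setting $S:=T-T^{\infty}$, an easy induction using $T^{n}T^{\infty}=T^{\infty}=(T^{\infty})^{2}$ shows $S^{n}=T^{n}-T^{\infty}$ for every $n\geq 1$; hence $S^{n}\to 0$ in the strong operator topology, $\Vert T^{n}-T^{\infty}\Vert=\Vert S^{n}\Vert$ and $\Vert T^{\infty}v-T^{n}v\Vert=\Vert S^{n}v\Vert$. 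So it is enough to prove: if $S\in\mathcal{B}(X)$ with $S^{n}\to 0$ strongly, then either $\Vert S^{n}\Vert\leq Cr^{n}$ for some $C\geq 0$ and $0\leq r<1$ (which is precisely (QUC)), or (ASC) holds.

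\emph{The geometric alternative.} Each orbit $(S^{n}x)$ converges, hence is bounded, so the uniform boundedness principle gives $M:=\sup_{n}\Vert S^{n}\Vert<\infty$. We may assume $\Vert S^{n}\Vert\neq 0$ for all $n$ (otherwise $S$ is nilpotent and (QUC) is trivial), and then by Gelfand's formula together with the subadditivity of $n\mapsto\log\Vert S^{n}\Vert$ one has $r(S)=\lim_{n}\Vert S^{n}\Vert^{1/n}=\inf_{n}\Vert S^{n}\Vert^{1/n}\leq M^{1/n}\to 1$. If $r(S)<1$, pick $r(S)<r<1$: then $\Vert S^{n}\Vert\leq r^{n}$ for all large $n$, and enlarging the constant to absorb the finitely many initial terms yields (QUC). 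If $r(S)=1$, then $\inf_{n}\Vert S^{n}\Vert^{1/n}=1$ forces $\Vert S^{n}\Vert\geq 1$ for every $n$, so (QUC) fails and it remains to establish (ASC).

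\emph{The lethargy alternative.} Assume $\Vert S^{n}\Vert\geq 1$ for all $n$. Fix a positive null sequence $(a_{n})$, which we may take non-increasing (replace $a_{n}$ by $\sup_{m\geq n}a_{m}$; this only raises each term and leaves $\sup_{n}a_{n}$ unchanged), and fix $\varepsilon>0$. We build $v=\sum_{k\geq 1}c_{k}u_{k}$ as follows. For a given index $n$, the bound $\Vert S^{n}\Vert\geq 1$ yields a unit vector with $\Vert S^{n}u\Vert$ as close to $1$ as we wish; we use near-maximizers $u_{k}$ of $S^{n_{k}}$ because then $\Vert S^{m}u_{k}\Vert\geq\Vert S^{n_{k}}u_{k}\Vert/\Vert S^{n_{k}-m}\Vert\geq(1-\delta)/M$ for all $0\leq m\leq n_{k}$, so one vector $u_{k}$ stays uniformly bounded below under all of the first $n_{k}$ powers of $S$ and can serve an entire block of indices up to $n_{k}$. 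The integers $n_{1}<n_{2}<\cdots$ and the $u_{k}$ are chosen inductively: once $u_{1},\dots,u_{k-1}$ are fixed, the pointwise convergence $S^{n}\to 0$ lets us take $n_{k}$ so large that $S^{n}$ is as small as desired on each of $u_{1},\dots,u_{k-1}$ for all $n>n_{k-1}$; the coefficients $c_{k}$, of size comparable to the largest value of $a_{n}$ on the $k$-th block $(n_{k-1},n_{k}]$ (with $n_{0}:=0$, so $c_{1}$ is of order $\sup_{n}a_{n}$), are taken to decay fast enough that $M\sum_{i>k}c_{i}$ is negligible. Then $\Vert v\Vert\leq\sum_{k}c_{k}<\sup_{n}a_{n}+\varepsilon$, since only $c_{1}$ is of order $\sup_{n}a_{n}$ and the rest form an arbitrarily small tail; and for each $n$, with $k$ the index of the block containing $n$, one checks $\Vert S^{n}v\Vert\geq c_{k}\Vert S^{n}u_{k}\Vert-(\text{cross terms from the }u_{i},\ i\neq k)\geq a_{n}$.

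\emph{Main obstacle.} The crux is that last inequality: securing $\Vert S^{n}v\Vert\geq a_{n}$ for \emph{every} $n$ rather than merely along the sparse sequence $(n_{k})$. A near-maximizer $u_{k}$ is guaranteed to stay large only through power $n_{k}$, whereas its residual $\Vert S^{n}u_{k}\Vert$ at indices $n$ just past $n_{k}$ — the start of the next block, where $a_{n}$ is still comparable to $c_{k}$ but $S^{n}u_{k}$ may still have norm of order $M$ — is not controlled by pointwise smallness, and $c_{k}$ can dominate the much smaller level $a_{n}$ required at the end of that next block. Making the construction go through therefore requires choosing the blocks and the near-maximizers so that each hump is actively damped once its block ends; arranging this bookkeeping is the technical heart of the argument, while the remaining estimates are routine once the block structure is in place.
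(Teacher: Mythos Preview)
The paper does not prove this theorem; it is quoted from \cite{BadeaGrivauxM} (their Theorem 2.1) and used as a black box, so there is no in-paper proof to compare against. Your reduction to $T^{\infty}=0$ and the spectral-radius dichotomy are correct and are exactly the opening moves of the argument in \cite{BadeaGrivauxM}.

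That said, your write-up is explicitly a sketch and, by your own account, leaves the decisive step unfinished. You correctly isolate the obstacle: for $n$ in the block $(n_{k},n_{k+1}]$ the term $c_{k}S^{n}u_{k}$ can swamp $c_{k+1}\Vert S^{n}u_{k+1}\Vert$, since $\Vert S^{n}u_{k}\Vert$ may remain of order $M$ just past $n_{k}$ while $c_{k}$ is at the level $a_{n_{k-1}+1}\gg a_{n}$. Saying that ``arranging this bookkeeping is the technical heart'' is accurate but is not a proof. The standard resolution (as in \cite{BadeaGrivauxM}, following M\"uller's lethargy lemma) is to interlace the choices: after fixing $u_{k}$ and $n_{k}$, one uses the pointwise convergence $S^{n}u_{k}\to 0$ to push the start of the next block far enough that $c_{k}\Vert S^{n}u_{k}\Vert$ is already below the target level $a_{n}$ for all $n$ in that next block, and one simultaneously lets $c_{k+1}$ drop only after this waiting period. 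Concretely, the coefficients $c_{k}$ are tied not to $\sup_{n_{k-1}<n\le n_{k}}a_{n}$ but to suitable values of $(a_{n})$ selected along the inductively constructed indices, and one needs a small margin (e.g.\ a geometric factor) in $c_{k}$ over $a_{n}$ to absorb the $\sum_{i\neq k}$ cross terms. Until you actually carry out this interlacing and write down the inequalities, the (ASC) half remains a heuristic rather than a proof.
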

Next, we want to establish a criterion that guarantees that the convergence is quick uniform in the case of averaged projections where $T=\frac{P_1+...+P_N}{N}$ (this case is considered because it will suit our needs later and because it is simple). In the case where $X=H$ is a Hilbert space, criteria for quick uniform are given in \cite{BadeaGrivauxM} and \cite{PRZ} (see also \cite{Kassabov} for related results) in terms of angles between subspaces. The basic idea is that if the angles between the subspaces are large enough, then the averaged projections method has quick uniform convergence. The concept of angle considered in the articles mentioned above is the Friedrichs angle defined as follows:
\begin{definition}[Friedrichs angle]
Let $M_1, M_2$ be subspaces of a Hilbert space $H$. Denote $M = M_1 \cap M_2$. If $M_1 =M$ or $M_2 = M$, Friedrichs angle between $M_1$ and $M_2$ is defined to be $\frac{\pi}{2}$. Otherwise, the Friedrichs angle between $M_1$ and $M_2$ is defined as:
$$\angle (M_1,M_2) = \arccos \left(
 \sup \left\lbrace \frac{\vert \langle u , v \rangle \vert}{\Vert u \Vert \Vert v \Vert} :0 \neq u \in M_1 \cap M^\perp,0 \neq v \in M_2 \cap M^\perp \right\rbrace  \right).$$
This definition is equivalent to 
$$\angle (M_1,M_2) = \arccos \left(\sup \lbrace \Vert P_{M_1} (v) \Vert : v \in M_2 \cap M^\perp, \Vert v \Vert =1 \rbrace \right),$$
where $P_{M_1}$ is the orthogonal projection on $M_1$. We'll leave this equivalence as an exercise to the reader.
\end{definition}
Although several authors gave definitions for angle between subspaces in Banach spaces (see for instance \cite{Ostrovski}), we could not find such a definition that suits our purpose and allows giving a criterion for quick uniform convergence of the averaged projections method in Banach spaces. There seems to be two major problems with such definitions: first, the lack of the concept orthogonality in Banach spaces (without passing to the dual space) and second, the fact that the projections are not uniquely determined by their image (unlike the case of orthogonal projections in a Hilbert space). In order to circumvent both problems, we shall define an angle between projections and not between subspaces. 
\begin{definition}[Friedrichs angle between projections]
Let $X$ be a Banach space and let $P_1, P_2$ be projections on $M_1,M_2$ respectively. Assume that there is a projection $P_{1,2}$ on $M_1 \cap M_2$ such that $P_{1,2} P_1 = P_{1,2}$ and $P_{1,2} P_2 = P_{1,2}$ and define 
$$\cos (\angle (P_1,P_2)) = \left(\max \left\lbrace \Vert P_1 (P_2 - P_{1,2} ) \Vert, \Vert P_2 (P_1 - P_{1,2} ) \Vert  \right\rbrace \right).$$ 
\end{definition}

\begin{remark}
In the above definition, we are actually defining the ``cosine'' of the angle. This is a little misleading, because in some cases  $\cos (\angle (P_1,P_2)) > 1$. 
\end{remark}

\begin{remark}
Note that the assumptions on $P_{1,2}$ above imply that $P_1 - P_{1,2}$ and $P_2- P_{1,2}$ are projections, i.e., $(P_1 - P_{1,2})^2 = P_1 - P_{1,2}$ and  $(P_2 - P_{1,2})^2 = P_2 - P_{1,2}$.
\end{remark}

\begin{remark}
In the case where $X$ is a Hilbert space and $P_1,P_2$ are orthogonal projections on $M_1,M_2$. The orthogonal projection $P_{1,2}$ on  $M_1 \cap M_2$ will always fulfil $P_{1,2} P_1 = P_{1,2}$ and $P_{1,2} P_2 = P_{1,2}$. Further more, note that in this case, we have that 
$$v \in M_2 \cap (M_1 \cap M_2)^\perp \Leftrightarrow (P_2-P_{1,2})v = v.$$
Therefore
\begin{dmath*}
\cos (\angle (M_1, M_2)) = {\sup \lbrace \Vert P_{1} (v) \Vert : v \in M_2 \cap M^\perp, \Vert v \Vert =1 \rbrace} = 
{\sup \lbrace \Vert P_{1} (v) \Vert : (P_2-P_{1,2})v = v, \Vert v \Vert =1 \rbrace} =  {\sup \lbrace \Vert P_{1} (P_2-P_{1,2})v \Vert : (P_2-P_{1,2})v = v, \Vert v \Vert =1 \rbrace} = \Vert P_{1} (P_2-P_{1,2}) \Vert = \cos (\angle (P_1, P_2)) . 
\end{dmath*}
Therefore the above definition is a complete analogue to the definition of Friedrichs angle in Hilbert spaces.
\end{remark}

Next, we shall also need the following useful constant $a(P_1,P_2)$:

\begin{definition}
Let $X$ be a Banach space and let  $P_1, P_2$ be projections on $M_1,M_2$ respectively. Define $a(P_1,P_2)$ as follows:
\begin{align*}
a (P_1, P_2 ) = \inf \lbrace \gamma \in [0, \infty ): \forall v \in X, \Vert (P_1 P_2 - P_2 P_1 ) v \Vert \leq \gamma \Vert (P_1 - P_2)  v \Vert  \rbrace.
\end{align*}
\end{definition}

Let $P_1,...,P_N$ be projections in a Banach space $X$. For $T=\frac{P_1+...+P_N}{N}$, we will show that if for every couple $P_i, P_j$, $\cos (\angle (P_i,P_j))$ is small enough, then the sequence $\lbrace T^n \rbrace$ converges uniformly quickly to $\bigcap_{i=1}^N Im (P_i)$. We shall prove this in two steps: first, we will show that if for every $P_i, P_j$, $a(P_i,P_j)$ is small enough then the sequence $\lbrace T^n \rbrace$ converges uniformly quickly to $\bigcap_{i=1}^N Im (P_i)$. Second, we will show $a(P_i,P_j)$ can be bounded from above by (a function of) $\cos (\angle (P_i,P_j))$.

We start the first step by introducing the following function $E: X \rightarrow \mathbb{R}$:
$$E (v) = \sum_{1 \leq i < j \leq N} \Vert (P_i -P_j) v \Vert.$$
\begin{lemma}
\label{E lemma}
Let $X$ be a Banach space and let $P_1,...,P_N$ be projections in $X$ (where $N \geq 2$).
Denote $T = \frac{P_1+...+P_N}{N},$
$$\alpha = \max \lbrace  a(P_i,P_j) : 1 \leq i < j \leq N \rbrace,$$
$$\beta = \max \lbrace \Vert P_1 \Vert,..., \Vert P_N \Vert \rbrace.$$
Then for every $v \in X$ and every $n \in \mathbb{N}$, we have that
$$E(T^n v) \leq \left(\dfrac{1+(N-2)\beta + (2N-3) \alpha}{N} \right)^n E(v),$$
and
$$E \left(T^n v \right) \leq 2 \beta {N \choose 2} \left(\dfrac{1+(N-2)\beta + (2N-3) \alpha}{N} \right)^n \Vert v \Vert.$$
\end{lemma}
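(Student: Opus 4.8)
The plan is to show the one-step estimate $E(Tv) \le \lambda E(v)$ with $\lambda = \frac{1+(N-2)\beta+(2N-3)\alpha}{N}$, and then iterate. For a fixed pair $i<j$, I would write $(P_i - P_j)Tv = \frac{1}{N}\sum_{k=1}^N (P_i - P_j)P_k v$ and split the sum into the terms $k=i$, $k=j$, and $k \ne i,j$. For $k=i$ we have $(P_i - P_j)P_i v = (P_i - P_j P_i)v$, and I want to rewrite $P_i - P_j P_i$ so that the $a(P_i,P_j)$ bound enters; the key algebraic identity is that $(P_i - P_j)P_i - \tfrac12\big((P_i - P_j) + (P_iP_j - P_jP_i)\big)$ should collapse to something controlled, more precisely $(P_i - P_j)P_i = \tfrac12(P_i - P_j) + \tfrac12(P_j P_i - P_i P_j) + \tfrac12(P_i - P_j)(P_i+P_j) $-type manipulation — I will need to find the exact combination so that $\|(P_i-P_j)P_i v\| \le \tfrac12\|(P_i-P_j)v\| + \tfrac12\|(P_iP_j - P_jP_i)v\| \le \tfrac{1+\alpha}{2}\|(P_i - P_j)v\|$, using the definition of $a(P_i,P_j)$ for the commutator term. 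The symmetric computation gives the same bound for the $k=j$ term, contributing $\tfrac{1+\alpha}{2} + \tfrac{1+\alpha}{2} = 1+\alpha$ in total from these two.

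For the remaining $N-2$ indices $k \ne i,j$, I would estimate $\|(P_i - P_j)P_k v\|$ by inserting telescoping: $(P_i - P_j)P_k = (P_i - P_k)P_k + (P_k - P_j)P_k = (P_i - P_k)P_k - (P_j - P_k)P_k$. Hmm — but $(P_i-P_k)P_k$ is not obviously small; instead I think the right move is $(P_i - P_j)P_k v = (P_i - P_j)v - (P_i-P_j)(I-P_k)v$, or better, to bound $\|(P_i - P_j)P_k v\| \le \beta\|(P_i - P_k)v\| + \cdots$ by writing $P_i - P_j = (P_i - P_k) - (P_j - P_k)$ and using $\|(P_i-P_k)P_k v\|$. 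Actually the cleanest is: $\|(P_i - P_j)P_k v\| \le \|P_i(P_k - \cdot)\| $ — let me instead bound $\|(P_i - P_j)P_k v\| \le \beta\big(\|(P_i - P_k)v\| + \|(P_j-P_k)v\|\big) + (\text{commutator terms bounded by }\alpha)$, landing on $(\beta+\alpha)$ times a sum of two $E$-summands (or $\beta\|(P_i-P_k)v\|+\alpha\|(P_i-P_k)v\|$ etc.), so each of the $N-2$ such $k$ contributes $(\beta+\alpha)$ worth of $E$-summands. Summing over all pairs $i<j$ (there are $\binom N2$ of them), each summand $\|(P_a-P_b)v\|$ of $E(v)$ gets counted a bounded number of times; a careful bookkeeping of the multiplicities is what produces the coefficient $(N-2)\beta + (2N-3)\alpha$ and then dividing by $N$ gives $\lambda$.

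Once $E(Tv) \le \lambda E(v)$ is established, the first displayed inequality follows by induction on $n$ since $T^n v = T(T^{n-1}v)$. For the second inequality, I would bound the initial value: $E(v) = \sum_{i<j}\|(P_i - P_j)v\| \le \sum_{i<j}(\|P_i\| + \|P_j\|)\|v\| \le \binom N2 \cdot 2\beta \|v\|$, and combine with the first inequality.

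\medskip

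\noindent\textbf{Main obstacle.} The delicate point is the bookkeeping in the one-step estimate — choosing the precise algebraic identities for $(P_i - P_j)P_k$ (both the $k \in \{i,j\}$ case, where the factor $\tfrac12$ and the commutator must appear exactly, and the $k \notin\{i,j\}$ case) so that after summing over all $\binom N2$ pairs the multiplicities collapse to exactly $(N-2)\beta + (2N-3)\alpha$ rather than something larger. Getting a clean coefficient, as opposed to a crude one, requires tracking which terms reuse the same $\|(P_a - P_b)v\|$ and exploiting cancellations; this is the part I expect to consume the real work, while the induction and the crude bound $E(v) \le 2\beta\binom N2\|v\|$ are routine.
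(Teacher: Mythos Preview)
Your overall strategy --- establish the one-step contraction $E(Tv)\le\lambda E(v)$, iterate, and then bound $E(v)\le 2\beta\binom{N}{2}\|v\|$ --- is exactly the paper's approach, and your closing bound on $E(v)$ is correct. The trouble lies in the algebra for the one-step estimate, where both of your proposed identities go wrong.

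For $k\in\{i,j\}$, do \emph{not} separate the two terms. The individual bound $\|(P_i-P_j)P_i v\|\le\tfrac{1+\alpha}{2}\|(P_i-P_j)v\|$ that you aim for is simply false: take $P_j=0$ (a projection), so that $a(P_i,P_j)=0$, and the inequality would read $\|P_iv\|\le\tfrac12\|P_iv\|$. The paper instead keeps $P_i+P_j$ grouped inside the norm before applying the triangle inequality: from
\[
(P_i-P_j)(P_i+P_j)=(P_i-P_j)+(P_iP_j-P_jP_i)
\]
one gets $\|(P_i-P_j)(P_i+P_j)v\|\le(1+\alpha)\|(P_i-P_j)v\|$ in one stroke.

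For $k\notin\{i,j\}$, the clean identity is obtained by commuting $P_k$ to the left:
\[
(P_i-P_j)P_k = P_k(P_i-P_j) + (P_iP_k-P_kP_i) - (P_jP_k-P_kP_j),
\]
which yields $\|(P_i-P_j)P_kv\|\le\beta\|(P_i-P_j)v\|+\alpha\|(P_i-P_k)v\|+\alpha\|(P_j-P_k)v\|$. Note that $\beta$ sits on the \emph{same} pair $(i,j)$ while $\alpha$ sits on the cross pairs --- your sketch had this reversed, which would overcount the $\beta$-contribution. With these two bounds,
\[
\|(P_i-P_j)Tv\|\le\tfrac{1}{N}\Big[(1+(N-2)\beta+\alpha)\|(P_i-P_j)v\|+\alpha\sum_{k\ne i,j}\big(\|(P_i-P_k)v\|+\|(P_j-P_k)v\|\big)\Big];
\]
summing over $i<j$, each cross summand $\|(P_a-P_b)v\|$ appears exactly $2(N-2)$ times by symmetry, and the total collapses to $\tfrac{1+(N-2)\beta+(2N-3)\alpha}{N}\,E(v)$ on the nose. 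The bookkeeping you flagged as the main obstacle is thus routine once the two identities above are in hand.
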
  

\begin{proof}
We start by observing that for every $1 \leq i < j \leq N$ we have that
$$(P_i - P_j) (P_i + P_j) = P_i - P_j + P_i P_j - P_j P_j.$$
Therefore, for every $v \in X$, we have that
\begin{dmath}
\label{ineq for P_i+ P_j}
\Vert (P_i - P_j) (P_i + P_j) v \Vert \leq \Vert (P_i - P_j)v \Vert + \Vert (P_i P_j - P_j P_j)v \Vert \leq \\ \Vert (P_i - P_j)v \Vert + a(P_i, P_j) \Vert (P_i - P_j)v \Vert \leq (1+ \alpha)  \Vert (P_i - P_j)v \Vert
\end{dmath}
Let $i,j$ as before and let $1 \leq k \leq N$ such that $k \neq i,j$. Observe that
\begin{dmath*}
(P_i - P_j) P_k = P_i P_k - P_j P_k =P_i P_k - P_k P_i +P_k P_i - P_k P_j + P_k P_j - P_j P_k = 
(P_i P_k - P_k P_i) + (P_k P_j - P_j P_k) + P_k (P_i - P_j). 
\end{dmath*} 
Therefore, for every $v \in X$, we have that 
\begin{dmath}
\label{ineq for P_i, P_j, P_k}
{\Vert (P_i - P_j) P_k v \Vert \leq \Vert (P_i P_k - P_k P_i)  v \Vert + \Vert (P_k P_j - P_j P_k) v \Vert + \Vert  P_k (P_i - P_j) v \Vert \leq} \\ a(P_i,P_k) \Vert (P_i - P_k)v \Vert + a (P_j, P_k)  \Vert (P_j - P_k)v \Vert + \Vert P_k \Vert \Vert  (P_i - P_j) v \Vert \leq \\ \alpha (\Vert (P_i - P_k)v \Vert  + \Vert (P_j - P_k)v \Vert) + \beta \Vert  (P_i - P_j) v \Vert.
\end{dmath}
Using \eqref{ineq for P_i+ P_j}, \eqref{ineq for P_i, P_j, P_k}, we get that 
\begin{dmath*}
\Vert (P_i - P_j) T v \Vert = \left\Vert (P_i - P_j) (\frac{P_1+...+P_N}{N}) v \right\Vert \leq \\ \dfrac{1+(N-2)\beta + \alpha}{N} \Vert (P_i - P_j)v \Vert + \dfrac{\alpha}{N} \sum_{1 \leq k \neq i, j \leq N} \left( \Vert (P_i - P_k)v \Vert  + \Vert (P_j - P_k)v \Vert \right).
\end{dmath*}
By summing this inequalities along all $1 \leq i < j \leq N$, we get that
$$E \left( T v \right)\leq \left(\dfrac{1+(N-2)\beta + (2N-3) \alpha}{N} \right) E (v).$$
Therefore, by induction we have for every $n \in \mathbb{N}$ that 
$$E \left(T^n v \right) \leq \left(\dfrac{1+(N-2)\beta + (2N-3) \alpha}{N} \right)^n E (v).$$
To conclude, notice that for every $v$ we have that 
$$\Vert E (v) \Vert \leq {N \choose 2} 2 \beta \Vert v \Vert.$$
\end{proof}

\begin{lemma}
\label{Quick uniform convergence a lemma}
Let $X$ be a Banach space and let $P_1,...,P_N$ be projections in $X$ (where $N \geq 2$).
Denote $T=\frac{P_1+...+P_N}{N}$.
If there are $ \alpha < \frac{1}{2N-3} \text{ and } \beta < \frac{N-1-(2N-3)\alpha}{N-2},$ such that 
$$ \max \lbrace  a(P_i,P_j) : 1 \leq i < j \leq N \rbrace \leq \alpha,$$
$$\max \lbrace \Vert P_1 \Vert,..., \Vert P_N \Vert \rbrace \leq \beta,$$
then there are constants $r' = r' (\alpha, \beta), C' = C'(\alpha, \beta)$ and an operator $T^\infty$, such that $0 \leq r' <1, C' \geq 0$ and $\Vert T^\infty - T^n  \Vert \leq C' (r')^n$. Moreover, $T^\infty$ is a projection on $\bigcap_{i=1}^N Im (P_i)$.
\end{lemma}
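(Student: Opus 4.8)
The plan is to feed Lemma~\ref{E lemma} into a telescoping estimate for the increments $T^{n+1}-T^n$, the only subtlety being an algebraic identity needed to express these increments in terms of $E$. First I would observe that the two hypotheses on $\alpha,\beta$ are precisely what forces the contraction factor of Lemma~\ref{E lemma} to be $<1$: writing $\rho=\frac{1+(N-2)\beta+(2N-3)\alpha}{N}$, the bound $\beta<\frac{N-1-(2N-3)\alpha}{N-2}$ rearranges to $1+(N-2)\beta+(2N-3)\alpha<N$, i.e.\ $\rho<1$ (for $N\ge 3$; the case $N=2$ is immediate since then $\rho=\frac{1+\alpha}{2}$), while $\alpha<\frac1{2N-3}$ guarantees the admissible range of $\beta$ is nonempty (recall $\beta\ge 1$). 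By Lemma~\ref{E lemma}, and monotonicity of its constant in the two parameters, we then have $E(T^nv)\le\rho^nE(v)\le 2\beta{N\choose 2}\rho^n\Vert v\Vert$ for all $v\in X$ and $n\in\mathbb{N}$.

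The crux is the identity $(P_j-I)P_i=(P_j-I)(P_i-P_j)$, valid for all $i,j$ (expand and use $P_j^2=P_j$). Averaging it over $i,j$ gives, for every $z\in X$,
$$T^2z-Tz=\frac{1}{N^2}\sum_{i,j}(P_j-I)(P_i-P_j)z,$$
so that $\Vert T^2z-Tz\Vert\le\frac{1+\beta}{N^2}\sum_{i,j}\Vert(P_i-P_j)z\Vert=\frac{2(1+\beta)}{N^2}E(z)$ (the diagonal terms vanish). Applying this with $z=T^{n-1}v$ for $n\ge 1$ and inserting the geometric decay of $E$ yields $\Vert T^{n+1}v-T^nv\Vert\le\frac{2(1+\beta)}{N^2}\rho^{n-1}E(v)\le C_0\rho^{n-1}\Vert v\Vert$ for a constant $C_0=C_0(\alpha,\beta)$. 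Summing the telescoping series, $\Vert T^m-T^n\Vert\le\sum_{k=n}^{m-1}\Vert T^{k+1}-T^k\Vert\le\frac{C_0}{1-\rho}\rho^{n-1}$ for $m>n\ge 1$; hence $(T^n)$ is Cauchy in the operator norm and converges to some $T^\infty\in\mathcal{B}(X)$, with $\Vert T^\infty-T^n\Vert\le C'(r')^n$ for $r'=\rho<1$ and a suitable $C'$ (enlarged to also absorb the term $n=0$).

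Finally I would identify $T^\infty$. Passing to the operator-norm limit in $T\cdot T^n=T^n\cdot T=T^{n+1}$ gives $TT^\infty=T^\infty T=T^\infty$, whence $(T^\infty)^2=T^\infty$, so $T^\infty$ is a projection. If $w\in\operatorname{Im}T^\infty$ then $w=T^\infty w=\lim_nT^nw$; since $E$ is continuous and $E(T^nw)\le\rho^nE(w)\to 0$, we get $E(w)=0$, i.e.\ all the $P_iw$ coincide, and moreover $Tw=(TT^\infty)w=T^\infty w=w$ forces this common value to equal $w$, so $w\in\bigcap_i\operatorname{Im}P_i$. Conversely, $w\in\bigcap_i\operatorname{Im}P_i$ gives $Tw=w$, hence $T^\infty w=w$, i.e.\ $w\in\operatorname{Im}T^\infty$. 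Thus $\operatorname{Im}T^\infty=\bigcap_i\operatorname{Im}P_i$ and $T^\infty$ is a projection onto it. I expect the one genuinely nonobvious point to be the need to shift by an extra power of $T$: the direct estimate $\Vert T^{n+1}-T^n\Vert=\Vert(T-I)T^n\Vert$ is \emph{not} controlled by $E(T^n\,\cdot\,)$, since $E$ vanishes whenever all the $P_iw$ agree, even for $w\notin\bigcap_i\operatorname{Im}P_i$; the identity $(P_j-I)P_i=(P_j-I)(P_i-P_j)$ is exactly what repairs this, and everything else is bookkeeping with Lemma~\ref{E lemma}.
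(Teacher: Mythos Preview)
Your proof is correct and follows essentially the same route as the paper: use Lemma~\ref{E lemma} to get geometric decay of $E(T^n v)$, control the increments $T^{n+1}-T^n$ by $E$ via an algebraic identity, sum the telescoping series, and identify the limit. The only cosmetic difference is the identity you use: you write $T^2-T=\frac{1}{N^2}\sum_{i,j}(P_j-I)(P_i-P_j)$, giving the constant $\frac{2(1+\beta)}{N^2}$, whereas the paper writes $T-T^2=\frac{1}{N^2}\sum_{i,j}P_i(P_i-P_j)$ (using $P_i^2=P_i$), which yields the marginally sharper $\frac{2\beta}{N^2}$; this has no effect on the argument.
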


\begin{proof}
Note that for every $v \in X$, we have that 
\begin{dmath*}
\Vert (T - T^2)v \Vert = \left\Vert \sum_{i=1}^N \frac{P_i}{N} (P_i - T)v \right\Vert =  \\ \left\Vert \sum_{i=1}^N \dfrac{P_i}{N} \sum_{j=1}^N \dfrac{P_i - P_j}{N} v \right\Vert \leq \dfrac{\beta}{N^2} \sum_{1 \leq i , j \leq N} \Vert (P_i - P_j)v \Vert  = \\ \dfrac{2 \beta}{N^2} \sum_{1 \leq i < j \leq N} \Vert (P_i - P_j)v \Vert = \dfrac{2 \beta}{N^2} E(v).
\end{dmath*}
Therefore, for any $n \in \mathbb{N}$ and any $v \in X$ we have that 
\begin{dmath}
\label{T^(n+1)-T^n}
\Vert (T^{n+1} - T^n)v \Vert = \Vert (T^2-T) (T^{n-1} v) \Vert \leq \dfrac{2 \beta}{N^2} E(T^{n-1} v) \leq \\ \dfrac{(2 \beta)^2}{N^2} {N \choose 2} \left(\dfrac{1+(N-2)\beta + (2N-3) \alpha}{N} \right)^{n-1} \Vert v \Vert,
\end{dmath}
where the last inequality is due to lemma \ref{E lemma}. Denote 
$$r' = \dfrac{1+(N-2)\beta + (2N-3) \alpha}{N} .$$
Notice that the conditions  $ \alpha < \frac{1}{2N-3} \text{ and } \beta < \frac{N-1-(2N-3)\alpha}{N-2}$ stated in the theorem, insure that $r' <1$. After simplifying \eqref{T^(n+1)-T^n}, we get that
$$\Vert T^{n+1} - T^n \Vert \leq \dfrac{(N-1)2 \beta^2}{N}  (r')^{n-1}.$$
Therefore, for every two integers $m >n$, we have that
\begin{dmath*}
\Vert T^m - T^n \Vert \leq \Vert T^m -T^{m-1} \Vert + \Vert T^{m-1} -T^{m-2} \Vert +...+  \Vert T^{n+1} -T^{n} \Vert \leq \\
 \dfrac{(N-1)2 \beta^2}{N} ((r')^{m-2} +(r')^{m-3} +...+(r')^{n-1} ) \leq \dfrac{(N-1)2 \beta^2}{N} \dfrac{1}{(1-r')r'} (r')^n.
\end{dmath*}
Denote $C' = \frac{(N-1)2 \beta^2}{N} \frac{1}{(1-r')r'}$. We showed that $(T^n)$ is a Cauchy sequence with respect to the operator norm and therefore converges to an operator $T^\infty$, and
$$\Vert T^\infty - T^n \Vert \leq C' (r')^n.$$
One can easily verify that $(T^\infty)^2 = T^\infty$ and therefore $T^\infty$ is a projection. To see that $Im (T^\infty) =  \bigcap_{i=1}^N Im (P_i)$, we first note that for every $v \in \bigcap_{i=1}^N Im (P_i)$, we have that $Tv =v$ and therefore $\bigcap_{i=1}^N Im (P_i) \subseteq Im (T^\infty)$. To finish the proof, we need to show that  $Im (T^\infty) \subseteq \bigcap_{i=1}^N Im (P_i)$. Note that  $T T^\infty = T^\infty$ and therefore for every $v \in Im (T^\infty)$, we have that $T v =v$.  From lemma \ref{E lemma}, we have that for every $v \in Im (T^\infty)$, $ E ( v ) = E (T v) \leq  r' E(v)$ and therefore  $E(v) =0$ (recall that $0 \leq r' <1$).
$E(v)$ is defined as 
$$E (v) = \sum_{1 \leq i < j \leq N} \Vert (P_i -P_j) v \Vert.$$
Therefore $E(v)=0$ implies that $ P_1 v = P_2 v = ... = P_N v$.
To finish, we'll again use the fact that $Tv =v$ and get that for every $1 \leq i \leq N$,
$$\left( v = Tv = \dfrac{P_1 v +... + P_N v}{N} = \dfrac{P_i v + ...+ P_i v}{N} = P_i v \right) \Rightarrow v \in Im (P_i).$$
Since this is true for all the $i$'s, we got that $Im (T^\infty) \subseteq \bigcap_{i=1}^N Im (P_i)$.
\end{proof}

Next, we turn to the second step, bounding $a(P_i,P_j)$ as a function of $\cos (\angle (P_i,P_j))$:
\begin{lemma}
\label{bounding a(P1,P2) by the angle lemma}
Let $X$ be a Banach space and let $P_1,P_2$ be two projections in $X$, such that there is a projection $P_{1,2}$ on $Im (P_1) \cap Im (P_2)$ such that $P_{1,2} P_1 = P_{1,2}$ and $P_{1,2} P_2 = P_{1,2}$. Denote $\beta = \max \lbrace \Vert P_1 \Vert, \Vert P_2 \Vert \rbrace$. Then 
$$a(P_1,P_2) \leq \dfrac{2 (1+ \beta) \cos (\angle (P_1,P_2))}{1-\cos (\angle (P_1,P_2))}.$$ 
\end{lemma}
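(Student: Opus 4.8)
The plan is to reduce everything to the two auxiliary projections $Q_1 := P_1 - P_{1,2}$ and $Q_2 := P_2 - P_{1,2}$ and to exploit that, by the very definition of the angle, the two ``off-diagonal'' products $Q_1 Q_2$ and $Q_2 Q_1$ have norm at most $c := \cos(\angle(P_1,P_2))$. Since the asserted estimate is only meaningful when $c < 1$, I assume $c < 1$ throughout. First I record the relevant algebra: from $P_{1,2} P_j = P_{1,2}$ (hypothesis) together with $P_j P_{1,2} = P_{1,2}$ (which holds since $\mathrm{Im}(P_{1,2}) = \mathrm{Im}(P_1) \cap \mathrm{Im}(P_2) \subseteq \mathrm{Im}(P_j)$) one gets that $Q_1, Q_2$ are projections and $Q_i P_{1,2} = P_{1,2} Q_i = 0$. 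Expanding then gives $P_1 P_2 = Q_1 Q_2 + P_{1,2}$ and $P_2 P_1 = Q_2 Q_1 + P_{1,2}$, hence $P_1 P_2 - P_2 P_1 = Q_1 Q_2 - Q_2 Q_1$ and $P_1 - P_2 = Q_1 - Q_2$; moreover $Q_1 Q_2 = (P_1 - P_{1,2}) Q_2 = P_1 Q_2 = P_1(P_2 - P_{1,2})$ and symmetrically $Q_2 Q_1 = P_2(P_1 - P_{1,2})$, so $\Vert Q_1 Q_2 \Vert \leq c$ and $\Vert Q_2 Q_1 \Vert \leq c$.

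Next, fix $v \in X$ and set $w := (P_1 - P_2) v = (Q_1 - Q_2) v$, $x := Q_1 Q_2 v$ and $y := Q_2 Q_1 v$; the goal becomes $\Vert x - y \Vert \leq \frac{2c}{1-c} \Vert w \Vert$. Using $Q_i^2 = Q_i$ together with the relations $Q_1 v = Q_2 v + w$ and $Q_2 v = Q_1 v - w$, I would first derive $Q_1 v = x + Q_1 w$ and $Q_2 v = y - Q_2 w$, and then substitute these back to obtain the coupled identities
$$x = Q_1 y - Q_1 Q_2 w, \qquad y = Q_2 x + Q_2 Q_1 w.$$
The crucial point is that $x \in \mathrm{Im}(Q_1)$ and $y \in \mathrm{Im}(Q_2)$, so $Q_1 y = Q_1 Q_2 y$ and $Q_2 x = Q_2 Q_1 x$ have norms at most $c \Vert y \Vert$ and $c \Vert x \Vert$ respectively; combining this with $\Vert Q_1 Q_2 w \Vert, \Vert Q_2 Q_1 w \Vert \leq c \Vert w \Vert$ yields $\Vert x \Vert \leq c(\Vert y \Vert + \Vert w \Vert)$ and $\Vert y \Vert \leq c(\Vert x \Vert + \Vert w \Vert)$. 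Adding the two and rearranging (this is where $c < 1$ enters) gives $\Vert x \Vert + \Vert y \Vert \leq \frac{2c}{1-c} \Vert w \Vert$, so that $\Vert (P_1 P_2 - P_2 P_1) v \Vert = \Vert x - y \Vert \leq \Vert x \Vert + \Vert y \Vert \leq \frac{2c}{1-c} \Vert (P_1 - P_2) v \Vert$. As $v$ was arbitrary, $a(P_1, P_2) \leq \frac{2c}{1-c}$, and the stated bound follows at once since $\beta = \max\{\Vert P_1 \Vert, \Vert P_2 \Vert\} \geq 1$ forces $\frac{2c}{1-c} \leq \frac{2(1+\beta)c}{1-c}$.

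There is no genuinely hard step here; the one idea that makes the argument go through is to feed the smallness of $Q_1 Q_2$ and $Q_2 Q_1$ into a self-referential pair of inequalities for $x = Q_1 Q_2 v$ and $y = Q_2 Q_1 v$, which is precisely what produces the $(1-c)^{-1}$ factor. The only thing requiring care is the bookkeeping of which vector lies in $\mathrm{Im}(Q_1)$ and which in $\mathrm{Im}(Q_2)$, so that at each stage the operator being estimated is literally $Q_1 Q_2$ or $Q_2 Q_1$ applied to a vector, and hence controlled by $c$. I note in passing that this route in fact delivers the sharper constant $\frac{2c}{1-c}$, with no dependence on $\beta$; presumably the slightly weaker form in the statement is all that is used later on.
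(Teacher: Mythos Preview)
Your argument is correct, and it is genuinely different from the paper's proof. The paper proceeds in two stages: it first bounds $\Vert (P_1P_2-P_2P_1)v\Vert$ by $2c\,\max\{\Vert Q_1 v\Vert,\Vert Q_2 v\Vert\}$, and then shows, via the decomposition $(P_1-P_2)v=(I-P_2)(P_1-P_2)v+P_2(P_1-P_2)v$ and a string of triangle inequalities, that $\max\{\Vert Q_1 v\Vert,\Vert Q_2 v\Vert\}\leq \frac{1+\beta}{1-c}\Vert (P_1-P_2)v\Vert$; the appearance of $(I-P_2)$ is what brings $\beta$ into the final constant. You instead work directly with the two pieces $x=Q_1Q_2 v$ and $y=Q_2Q_1 v$ of the commutator, produce the coupled estimates $\Vert x\Vert\leq c(\Vert y\Vert+\Vert w\Vert)$ and $\Vert y\Vert\leq c(\Vert x\Vert+\Vert w\Vert)$, and solve the resulting system. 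This bypasses any reference to $\Vert P_j\Vert$ and yields the strictly sharper bound $a(P_1,P_2)\leq \frac{2c}{1-c}$; the paper's intermediate inequality controlling $\Vert Q_j v\Vert$ in terms of $\Vert (P_1-P_2)v\Vert$ is not needed anywhere else, so nothing is lost by your shortcut. One cosmetic remark: to pass from $\frac{2c}{1-c}$ to $\frac{2(1+\beta)c}{1-c}$ you only need $1+\beta\geq 1$, which holds for any $\beta\geq 0$, so invoking $\beta\geq 1$ is harmless but unnecessary.
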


\begin{proof}
Let $v \in X$. We start by noting that 
\begin{dmath*}
(P_1 P_2 - P_2 P_1)v = (P_1 P_2 - P_{1,2}) v - (P_2 P_1 - P_{1,2}) v  = P_1 (P_2 - P_{1,2})(P_2 - P_{1,2})v - P_2 (P_{1} - P_{1,2}) (P_{1} - P_{1,2})v .
\end{dmath*}
This yields that
\begin{dmath*}
{\Vert (P_1 P_2 - P_2 P_1)v \Vert \leq \cos (\angle (P_1,P_2)) \left(\Vert (P_1 - P_{1,2})v \Vert +\Vert (P_2 - P_{1,2})v \Vert\right)} \leq \\ 2\cos (\angle (P_1,P_2)) \max \lbrace \Vert (P_1 - P_{1,2})v \Vert,\Vert (P_2 - P_{1,2})v \Vert \rbrace.
\end{dmath*}
Therefore in order to prove the inequality stated in the lemma it is enough to show that 
$$\max \lbrace \Vert (P_1 - P_{1,2})v \Vert,\Vert (P_2 - P_{1,2})v \Vert \rbrace \leq \dfrac{1+\beta}{1-\cos (\angle (P_1,P_2))}\Vert (P_1 - P_2)v \Vert.$$
Assume without loss of generality that $\Vert (P_1 - P_{1,2})v \Vert \geq \Vert (P_2 - P_{1,2})v \Vert$. Then we have that
\begin{dmath*}
\Vert (P_1-P_2)v \Vert = \Vert (I-P_2)(P_1-P_2)v + P_2 (P_1-P_2)v  \Vert \geq \\ \Vert (I-P_2)(P_1-P_2)v \Vert - \Vert P_2 (P_1-P_2)v  \Vert \geq \\ \Vert (I-P_2)(P_1-P_{1,2} + P_{1,2} - P_2)v \Vert -  \beta \Vert (P_1-P_2)v  \Vert = \\  \Vert (I-P_2)(P_1-P_{1,2})v \Vert -  \beta \Vert (P_1-P_2)v  \Vert \geq \\ \Vert (P_1-P_{1,2})v \Vert -\Vert P_2(P_1-P_{1,2})v \Vert - \beta \Vert (P_1-P_2)v  \Vert = \\ \Vert (P_1-P_{1,2})v \Vert -\Vert P_2(P_1-P_{1,2})(P_1-P_{1,2})v \Vert - \beta \Vert (P_1-P_2)v  \Vert \geq \\ (1-\cos (\angle (P_1,P_2))\Vert (P_1-P_{1,2})v \Vert -\beta \Vert (P_1-P_2)v  \Vert,        
\end{dmath*}
which yields the necessary inequality to finish the proof.
\end{proof}

Combining the two lemmas above gives raise to the following convergence criterion:
\begin{theorem}
\label{Quick uniform convergence criterion}
Let $X$ be a Banach space and let $P_1,...,P_N$ be projections in $X$ (where $N \geq 2$).
Denote $T=\frac{P_1+...+P_N}{N}$,
$$\cos_\max = \max \lbrace  \cos(\angle (P_i,P_j)) : 1 \leq i < j \leq N \rbrace.$$
Assume there are constants 
$$\gamma < \frac{1}{8N-11} \text{ and } \beta < 1+ \frac{1-(8N-11)\gamma}{N-2 + (3N-4)\gamma},$$
such that  $ \max \lbrace \Vert P_1 \Vert,..., \Vert P_N \Vert \rbrace \leq \beta$ and $\cos_\max \leq \gamma$.
Then there are constants $r = r(\gamma, \beta), C = C(\gamma, \beta)$ and an operator $T^\infty$, such that $0 \leq r <1, C \geq 0$ and $\Vert T^\infty - T^n  \Vert \leq C r^n$. Moreover, $T^\infty$ is a projection on $\bigcap_{i=1}^N Im (P_i)$.
\end{theorem}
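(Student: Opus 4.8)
The plan is to obtain this theorem by combining Lemma \ref{bounding a(P1,P2) by the angle lemma} with Lemma \ref{Quick uniform convergence a lemma}. First I would note that the mere fact that $\cos_\max$ is defined forces, for each pair $i<j$, the existence of a projection $P_{i,j}$ on $Im (P_i) \cap Im (P_j)$ with $P_{i,j} P_i = P_{i,j}$ and $P_{i,j} P_j = P_{i,j}$, so Lemma \ref{bounding a(P1,P2) by the angle lemma} is available for every pair. Since $\cos (\angle (P_i,P_j)) \leq \gamma < 1$ and $t \mapsto \frac{2(1+\beta)t}{1-t}$ is increasing on $[0,1)$, that lemma yields
\[
a(P_i,P_j) \leq \alpha_0 := \frac{2(1+\beta)\gamma}{1-\gamma}, \qquad 1 \leq i < j \leq N .
\]

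Next I would verify that the hypotheses on $\gamma$ and $\beta$ are precisely what is needed to apply Lemma \ref{Quick uniform convergence a lemma} with $\alpha = \alpha_0$, namely $\alpha_0 < \frac{1}{2N-3}$ and $\beta < \frac{N-1-(2N-3)\alpha_0}{N-2}$. The convenient reformulation is that the bound $\beta < 1 + \frac{1-(8N-11)\gamma}{N-2+(3N-4)\gamma}$ is equivalent (clearing the denominator, which is positive) to
\[
1+\beta < \frac{(2N-3)(1-\gamma)}{N-2+(3N-4)\gamma},
\]
whence $\alpha_0 < \frac{2(2N-3)\gamma}{N-2+(3N-4)\gamma}$. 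Using the identity $2(2N-3)^2 - (3N-4) = (8N-11)(N-2)$, the hypothesis $\gamma < \frac{1}{8N-11}$ is exactly $\frac{2(2N-3)\gamma}{N-2+(3N-4)\gamma} < \frac{1}{2N-3}$, giving the first inequality; and substituting the two displayed estimates into $(N-2)\beta + (2N-3)\alpha_0$ makes the fractional part collapse to $\frac{N-2+(3N-4)\gamma}{N-2+(3N-4)\gamma} = 1$, so that $(N-2)\beta + (2N-3)\alpha_0 < N-1$, which is the second inequality.

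Finally I would invoke Lemma \ref{Quick uniform convergence a lemma} with $\alpha = \alpha_0$: it supplies an operator $T^\infty$ which is a projection onto $\bigcap_{i=1}^N Im (P_i)$ and satisfies $\Vert T^\infty - T^n \Vert \leq C r^n$ with $r = \frac{1+(N-2)\beta+(2N-3)\alpha_0}{N} \in [0,1)$ and $C = \frac{(N-1)2\beta^2}{N}\cdot\frac{r}{1-r}$; since $\alpha_0$ is a function of $\gamma$ and $\beta$ alone, so are $r$ and $C$, as claimed. There is no conceptual obstacle here beyond the two lemmas already proved; the one point requiring care is the elementary-but-fiddly algebra of the middle paragraph, and the role of the constant $8N-11$ in the statement is precisely to make that bookkeeping come out. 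I would expect that verification to be the only real work.
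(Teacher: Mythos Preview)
Your proposal is correct and follows exactly the paper's approach: apply Lemma~\ref{bounding a(P1,P2) by the angle lemma} to bound each $a(P_i,P_j)$ by $\alpha_0 = \frac{2(1+\beta)\gamma}{1-\gamma}$, then verify algebraically that the hypotheses on $\gamma,\beta$ force the two inequalities needed for Lemma~\ref{Quick uniform convergence a lemma}. Your algebra (via the reformulation $1+\beta < \frac{(2N-3)(1-\gamma)}{N-2+(3N-4)\gamma}$ and the identity $2(2N-3)^2-(3N-4)=(8N-11)(N-2)$) is in fact cleaner and more transparent than the paper's, which simply asserts ``standard algebraic manipulations'' at the two key steps.
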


\begin{proof}
By lemma \ref{bounding a(P1,P2) by the angle lemma}, we get that
$$\max \lbrace  a(P_i,P_j) : 1 \leq i < j \leq N \rbrace \leq  \dfrac{2 (1+ \beta) \cos_\max}{1-\cos_\max} \leq \dfrac{2 (1+ \beta) \gamma}{1-\gamma} .$$
Denote $\alpha = \frac{2 (1+ \beta) \gamma}{1-\gamma}$. By lemma \ref{Quick uniform convergence a lemma}, it is enough to verify that 
$$\alpha < \dfrac{1}{2N-3} \text{ and } \beta < \frac{N-1-(2N-3)\alpha}{N-2},$$
and then take $C = C' ( \frac{2 (1+ \beta) \gamma}{1-\gamma}, \beta)$, $r = r' ( \frac{2 (1+ \beta) \gamma}{1-\gamma}, \beta)$, where $C'$ and $r'$ are the constants given in lemma \ref{Quick uniform convergence a lemma}. 

By our notations $ \alpha < \frac{1}{2N-3}$, is equivalent to
$$ \dfrac{2 (1+ \beta) \gamma}{1-\gamma} < \dfrac{1}{2N-3}.$$
Standard algebraic manipulations yields that the above inequality is equivalent to 
$$\beta < 1+ \dfrac{1-(8N-11)\gamma}{(4N-6)\gamma}.$$
First note that $\beta \geq 1$ and therefore without the assumption  $\gamma <\frac{1}{8N-11}$ this inequality cannot hold. Second note that $\gamma <\frac{1}{8N-11} <1$ and therefore the assumption that $\beta < 1+ \frac{1-(8N-11)\gamma}{N-2 + (3N-4) \gamma}$ implies the needed inequality. 

Next, we need to check that 
$\beta < \frac{N-1-(2N-3)\alpha}{N-2}$, i.e., we need to check that
$$\beta < \frac{N-1-(2N-3)\frac{2 (1+ \beta) \gamma}{1-\gamma}}{N-2}.$$
Standard algebraic manipulations yields that this is equivalent to 
$$ \beta < 1+ \frac{1-(8N-11)\gamma}{N-2 + (3N-4)\gamma},$$
as needed.
\end{proof}

\section{Robust Banach property (T)}
Throughout this section we will work under the following assumptions (and notations): $G$ is a locally compact group with a Haar measure $\mu$. Assume that $G$ generated by compact subgroups $K_1,...,K_N$, such that $\mu (K_i) >0$ for every $i=1,...,N$ and such that for each $i \neq j$, $K_{i,j}=\langle K_i, K_j \rangle$ is compact. Denote $K=\bigcup_{1 \leq i < j \leq N} K_{i,j}$. Define functions $k_i \in C_c (G), \forall 1 \leq i \leq N$ and $k_{i,j} \in C_c (G), \forall 1 \leq i < j \leq N$ as 
$$k_i (g) = \begin{cases}
\frac{1}{\mu (K_i)} & g \in K_i \\
0 & g \notin K_i
\end{cases},$$
$$k_{i,j} (g) = \begin{cases}
\frac{1}{\mu (K_{i,j})} & g \in K_{i,j} \\
0 & g \notin K_{i,j}
\end{cases}.$$

\begin{proposition}
\label{pi (k_i) are projections}
Let $G$ be as above and let $X$ be a Banach space. For any $1 \leq i \leq N$ and any representation $\pi$ of $G$ on $X$, $\pi (k_i)$ is a projection on $X^{\pi (K_i)} = \lbrace v \in X : \forall g \in K_i, \pi (g).v=v \rbrace$. Further more, for any $1 \leq i <j \leq N$ and any representation $\pi$ of $G$ on $X$, $\pi (k_{i,j})$ is a projection on $X^{\pi (K_{i,j})} = X^{\pi (K_i)} \cap X^{\pi (K_j)}$ and $\pi (k_{i,j}) \pi (k_i) =\pi (k_{i,j}), \pi (k_{i,j}) \pi (k_j) =\pi (k_{i,j})$. 
\end{proposition}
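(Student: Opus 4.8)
The plan is to handle each compact subgroup uniformly. Write $K$ for either $K_i$ or $K_{i,j}$ and $k=\frac{1}{\mu(K)}\mathbf{1}_K$, so that $\pi(k)v=\frac{1}{\mu(K)}\int_K\pi(g)v\,d\mu(g)$ is just the operator of averaging over $K$. First I would record that $\pi(k)$ is a well-defined bounded operator: since $\pi$ is strongly continuous and $K$ is compact, each orbit map $g\mapsto\pi(g)v$ is continuous on $K$, hence bounded, so the uniform boundedness principle gives $\sup_{g\in K}\|\pi(g)\|<\infty$, and the Bochner integral defining $\pi(k)v$ converges.

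The projection statement for $\pi(k)$ then follows in three steps, using only that $\mu$ is left-invariant on $G$ and that $K$ is a subgroup (so $hK=K$ for $h\in K$ and $K^{-1}=K$). (i) If $v\in X^{\pi(K)}$ then $\pi(k)v=\frac{1}{\mu(K)}\int_K v\,d\mu(g)=v$. (ii) For arbitrary $v$ and $h\in K$, pulling the bounded operator $\pi(h)$ through the integral and performing the substitution $g\mapsto hg$ (which preserves $\mu$ and maps $K$ onto $hK=K$) gives $\pi(h)\pi(k)v=\frac{1}{\mu(K)}\int_K\pi(hg)v\,d\mu(g)=\pi(k)v$, so $\pi(k)v\in X^{\pi(K)}$. (iii) Combining (i) and (ii), $\pi(k)^2=\pi(k)$ and $\mathrm{Im}(\pi(k))=X^{\pi(K)}$. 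Taking $K=K_i$ and $K=K_{i,j}$ yields the two projection assertions, and $X^{\pi(K_{i,j})}=X^{\pi(K_i)}\cap X^{\pi(K_j)}$ is immediate because $K_i$ and $K_j$ generate $K_{i,j}$, so a vector fixed by both is fixed by every element of $K_{i,j}$.

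For the absorption identities $\pi(k_{i,j})\pi(k_i)=\pi(k_{i,j})$ and $\pi(k_{i,j})\pi(k_j)=\pi(k_{i,j})$ I would first establish the convolution identity $k_{i,j}*k_i=k_{i,j}$ and then invoke multiplicativity of $f\mapsto\pi(f)$; the latter is legitimate here because $k_i,k_{i,j}$ are bounded and supported in the compact set $K_{i,j}$, on which $\pi$ is bounded, so Fubini applies to the iterated Bochner integral. To see $k_{i,j}*k_i=k_{i,j}$: for $g\in G$ one computes $(k_{i,j}*k_i)(g)=\frac{1}{\mu(K_{i,j})\mu(K_i)}\,\mu\big(\{h\in K_{i,j}:h^{-1}g\in K_i\}\big)$; if $g\notin K_{i,j}$ this set is empty (otherwise $g=h(h^{-1}g)\in K_{i,j}K_{i,j}=K_{i,j}$), while for $g\in K_{i,j}$ it equals $gK_i\subseteq K_{i,j}$, of $\mu$-measure $\mu(K_i)$ by left-invariance, so the value is $\frac{1}{\mu(K_{i,j})}=k_{i,j}(g)$. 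The same computation with $i$ replaced by $j$ gives the other identity.

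The step I expect to be the most delicate is this last one — both the bookkeeping in the convolution computation and the justification that $\pi$ respects convolution when $\pi$ is only assumed bounded on $K$ rather than globally bounded. Everything else is routine once boundedness on compact subgroups (via uniform boundedness) is in hand.
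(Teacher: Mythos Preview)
Your proof is correct and follows essentially the same route as the paper: both arguments reduce to the left-invariance of $k$ under $K$ to get the projection onto $X^{\pi(K)}$, and both prove the absorption identities by establishing $k_{i,j}*k_i=k_{i,j}$ in $C_c(G)$ and then passing to $\pi$. The only differences are organizational---the paper derives idempotency at the level of $C_c(G)$ (from $g.k_i=k_i$ one gets $k_i*k_i=k_i$) while you derive it directly at the operator level from your steps (i) and (ii)---and that you spell out the boundedness and Fubini justifications more carefully than the paper does.
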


\begin{proof}
Fix some $i$. Note that for every $g \in K_i$, $g. k_i = k_i$. This implies two things: first,  $(k_i)^2 = k_i$ and therefore $\pi (k_i)^2 = \pi (k_i)$, i.e., $\pi (k_i)$ is a projection. Second, for every $g \in K_i$ and every $v \in X$, $\pi (g) \pi (k_i). v =  \pi (k_i). v$ and therefore $Im (\pi (k_i)) \subseteq X^{\pi (K_i)}$.  To see that $X^{\pi (K_i)} \subseteq Im (\pi (k_i))$, notice that for every $v \in X^{\pi (K_i)}$ we have that
$$\pi (k_i).v = \int_{g \in K_i}  \dfrac{\pi (g).v}{\mu (K_i)} d \mu (g) = \dfrac{1}{\mu (K_i)} \int_{g \in K_i} v d \mu (g) = v.$$
The proof that $\pi (k_{i,j})$ is a projection on $X^{\pi (K_{i,j})}$ is similar and therefore is left for the reader. To see that  $\pi (k_{i,j}) \pi (k_i) =\pi (k_{i,j}), \pi (k_{i,j}) \pi (k_j) =\pi (k_{i,j})$, we note that in the algebra $C_c (G)$, we have that for any $g \in K_{i,j}$, $k_{i,j} k_i = k_{i,j}, k_{i,j} k_j = k_{i,j}$ and therefore this equality passes to any representation $\pi$.
\end{proof}
The above proposition shows that for any representation $\pi$ of $G$, we can define $\cos (\angle (\pi (k_i),\pi (k_j)))$ as in the previous section, i.e, 
$$\cos (\angle (\pi (k_i),\pi (k_j))) = \max \lbrace \Vert \pi (k_i) \pi (k_j) - \pi (k_{i,j} ) \Vert,  \Vert \pi (k_j) \pi (k_i) - \pi (k_{i,j} ) \Vert  \rbrace.$$
Note that this yields that
$$\cos (\angle (\pi (k_i),\pi (k_j))) = \max \lbrace \Vert \pi (k_i k_j - k_{i,j} ) \Vert,  \Vert \pi (k_j k_i - k_{i,j} ) \Vert \rbrace.$$
In particular, this is true when $G=K_{i,j}$ and therefore $\cos (\angle (\pi (k_i),\pi (k_j)))$ is defined for any representation of $K_{i,j}$. For any $1 \leq i < j \leq N$ and any Banach space $X$, denote by $\lambda_{i,j}$ the left regular representation on $L^2 (K_{i,j}, \mu)$ and recall that $\lambda_{i,j} \otimes id_X$ is an isometric representation on $L^2 (K_{i,j} ; X)$. Denote
$$\cos^X (\angle (k_i, k_j)) = \cos (\angle ((\lambda_{i,j} \otimes id_X) (k_i), (\lambda_{i,j} \otimes id_X) (k_j))),$$
$$\cos^X_\max = \max_{1 \leq i < j \leq N} \cos^X (\angle (k_i, k_j)).$$
\begin{remark}
As noted above 
$$\cos^X (\angle (k_i, k_j)) = \max \lbrace \Vert (\lambda_{i,j} \otimes id_X) (k_i k_j - k_{i,j} ) \Vert,  \Vert (\lambda_{i,j} \otimes id_X) (k_j k_i - k_{i,j} ) \Vert \rbrace.$$
Therefore $\cos^X (\angle (k_i, k_j))$ is given as a maximum of two operator of the general form $T \otimes id_X$, where $T$ is an operator on $T \in B(L^2 (K_{i,j} ,\mu)$ and therefore we can apply the results for vector value $L^2$ spaces to bound $\cos^X (\angle (k_i, k_j))$.
\end{remark}
For a class $\mathcal{E}$ of Banach spaces denote 
$$\cos^{\mathcal{E}}_\max= \sup_{X \in \mathcal{E}} \cos^X_\max.$$
With this notation, we can use the criterion stated in theorem \ref{Quick uniform convergence criterion} to get a criterion for robust Banach property (T):
\begin{theorem}
\label{mSBT criterion via regular rep}
Let $G$,$K$ be as above and let $\mathcal{E}$ be a class of Banach spaces. Assume that there is $\varepsilon >0$ such that 
$$\cos^{\mathcal{E}}_\max  \leq \frac{1- \varepsilon}{8N-11} .$$ 
Then there is  $s_0 >0$ such that the sequence $(\frac{k_1 + ... +k_N}{N} )^n$ converges to $p$  in $C_{\mathcal{F} (\mathcal{E},K,s_0)}$  as $n \rightarrow \infty$ and $\forall \pi \in \mathcal{F} (\mathcal{E},K, s_0)$, $\pi (p)$ is a projection on $X^\pi$.
\end{theorem}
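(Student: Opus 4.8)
The plan is to fix an arbitrary $\pi \in \mathcal{F}(\mathcal{E},K,s_0)$ acting on a Banach space $X$, apply the quick uniform convergence criterion of Theorem \ref{Quick uniform convergence criterion} to the projections $P_i = \pi(k_i)$, $i = 1,\dots,N$, with constants that do not depend on $\pi$, and then deduce that the convolution powers form a Cauchy sequence in the norm of $C_{\mathcal{F}(\mathcal{E},K,s_0)}$. By Proposition \ref{pi (k_i) are projections}, each $\pi(k_i)$ is a projection onto $X^{\pi(K_i)}$, and $\pi(k_{i,j})$ is a projection onto $Im(\pi(k_i)) \cap Im(\pi(k_j))$ with $\pi(k_{i,j})\pi(k_i) = \pi(k_{i,j}) = \pi(k_{i,j})\pi(k_j)$, so it serves as the operator $P_{i,j}$ in the definition of the Friedrichs angle between $\pi(k_i)$ and $\pi(k_j)$. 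Since $\pi$ is an algebra homomorphism, $\pi\big((\tfrac{k_1 + \dots + k_N}{N})^{n}\big) = \big(\tfrac{\pi(k_1) + \dots + \pi(k_N)}{N}\big)^{n} = T_\pi^{\,n}$; hence, once the criterion applies, $T_\pi^{\,n}$ converges in operator norm to a projection $T_\pi^{\infty}$ onto $\bigcap_{i=1}^N Im(\pi(k_i)) = \bigcap_{i=1}^N X^{\pi(K_i)} = X^{\pi}$, the last equality because $K_1,\dots,K_N$ generate $G$.

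The heart of the argument is bounding $\max_i \Vert \pi(k_i) \Vert$ and $\cos(\angle(\pi(k_i),\pi(k_j)))$ uniformly in $\pi$. For the norms, directly from the Bochner integral $\Vert \pi(k_i) \Vert \le \sup_{g \in K_i} \Vert \pi(g) \Vert \le e^{s_0}$, since $K_i \subseteq K$. For the angles, recall that $\cos(\angle(\pi(k_i),\pi(k_j))) = \max\{ \Vert \pi(k_i k_j - k_{i,j}) \Vert, \Vert \pi(k_j k_i - k_{i,j}) \Vert \}$, and the functions $k_i k_j - k_{i,j}$ and $k_j k_i - k_{i,j}$ are real and supported on the compact open subgroup $K_{i,j}$, so their convolution products may be computed inside the group algebra of $K_{i,j}$. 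Viewing $\pi$ restricted to $K_{i,j}$ as a representation of the compact group $K_{i,j}$ with $\sup_{g \in K_{i,j}} \Vert \pi(g) \Vert \le e^{s_0}$, Corollary \ref{bounding the norm of pi(f) - corollary} gives $\Vert \pi(f) \Vert_{B(X)} \le e^{2 s_0} \Vert (\lambda_{i,j} \otimes id_X)(f) \Vert_{B(L^2(K_{i,j};X))}$ for $f \in \{ k_i k_j - k_{i,j}, k_j k_i - k_{i,j} \}$; taking the maximum over the two and recalling the description of $\cos^X(\angle(k_i,k_j))$ from the remark preceding the theorem, we obtain $\cos(\angle(\pi(k_i),\pi(k_j))) \le e^{2 s_0} \cos^X(\angle(k_i,k_j)) \le e^{2 s_0} \cos^{\mathcal{E}}_{\max} \le e^{2 s_0} \tfrac{1-\varepsilon}{8N-11}$.

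It remains to choose $s_0$. Put $\gamma = e^{2 s_0} \tfrac{1-\varepsilon}{8N-11}$ and $\beta = e^{s_0}$; by the previous paragraph these dominate $\max_{i<j}\cos(\angle(\pi(k_i),\pi(k_j)))$ and $\max_i\Vert\pi(k_i)\Vert$ for every $\pi \in \mathcal{F}(\mathcal{E},K,s_0)$. As $s_0 \downarrow 0$ we have $\gamma \to \tfrac{1-\varepsilon}{8N-11} < \tfrac{1}{8N-11}$, so $1 - (8N-11)\gamma \to \varepsilon > 0$ and $1 + \tfrac{1-(8N-11)\gamma}{N-2+(3N-4)\gamma}$ tends to a number strictly larger than $1$, while $\beta = e^{s_0} \to 1$; hence there is $s_0 > 0$ with $\gamma < \tfrac{1}{8N-11}$ and $\beta < 1 + \tfrac{1-(8N-11)\gamma}{N-2+(3N-4)\gamma}$. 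Fixing such $s_0$, Theorem \ref{Quick uniform convergence criterion} applies to $\pi(k_1),\dots,\pi(k_N)$ for every $\pi \in \mathcal{F}(\mathcal{E},K,s_0)$ and yields $\Vert T_\pi^{\infty} - T_\pi^{\,n} \Vert \le C r^{n}$ with $C = C(\gamma,\beta)$, $r = r(\gamma,\beta)$ independent of $\pi$. Therefore, for $m > n$,
$$\Big\Vert \big(\tfrac{k_1+\dots+k_N}{N}\big)^{m} - \big(\tfrac{k_1+\dots+k_N}{N}\big)^{n} \Big\Vert_{\mathcal{F}(\mathcal{E},K,s_0)} = \sup_{\pi} \Vert T_\pi^{\,m} - T_\pi^{\,n} \Vert \le C(r^{m} + r^{n}),$$
which tends to $0$ as $n \to \infty$, so the sequence is Cauchy in $C_{\mathcal{F}(\mathcal{E},K,s_0)}$ and converges to some $p$. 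Since each $\pi \in \mathcal{F}(\mathcal{E},K,s_0)$ extends to a contractive algebra homomorphism on $C_{\mathcal{F}(\mathcal{E},K,s_0)}$, we get $\pi(p) = \lim_n T_\pi^{\,n} = T_\pi^{\infty}$, which is a projection onto $X^{\pi}$ by the first paragraph.

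The step I expect to be the main obstacle is the middle one: transferring control from an abstract Banach representation $\pi$ to a quantity governed solely by the class $\mathcal{E}$. This is precisely where de la Salle's machinery is used — Corollary \ref{bounding the norm of pi(f) - corollary} (built on Proposition \ref{bounding the norm of pi(f) - Salle}), applied to the fact that $k_i k_j - k_{i,j}$ and $k_j k_i - k_{i,j}$ are located inside the compact group $K_{i,j}$, so that the relevant operators have the form $T \otimes id_X$ for a fixed $T \in B(L^2(K_{i,j}))$. A secondary but essential point is that the constants $C, r$ produced by Theorem \ref{Quick uniform convergence criterion} depend only on $\gamma$ and $\beta$, not on $\pi$ or $X$; this uniformity is exactly what upgrades operator-norm convergence for each individual $\pi$ to convergence in the norm of $C_{\mathcal{F}(\mathcal{E},K,s_0)}$.
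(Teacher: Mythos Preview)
Your proposal is correct and follows essentially the same route as the paper: bound $\Vert \pi(k_i)\Vert$ directly by $e^{s_0}$, bound $\cos(\angle(\pi(k_i),\pi(k_j)))$ via Corollary \ref{bounding the norm of pi(f) - corollary} by $e^{2s_0}\cos^{\mathcal{E}}_{\max}$, choose $s_0$ small enough so that Theorem \ref{Quick uniform convergence criterion} applies with $\pi$-independent $\gamma,\beta$, and then use the uniformity of $C(\gamma,\beta),r(\gamma,\beta)$ to pass from operator-norm convergence for each $\pi$ to convergence in $C_{\mathcal{F}(\mathcal{E},K,s_0)}$. The only cosmetic difference is that the paper picks explicit values $\gamma = \tfrac{1-\varepsilon/2}{8N-11}$, $\beta = 1+\tfrac{\varepsilon}{8N}$ and works out concrete $s_1,s_2$ with $s_0=\min\{s_1,s_2\}$, whereas you argue the existence of $s_0$ by continuity as $s_0\downarrow 0$; both are equally valid.
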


\begin{proof}
Assume without loss of generality that $\varepsilon <1$. \\
\textbf{Step 1:} We'll show that there is $s_1 >0$ such that for every $\pi \in \mathcal{F} (\mathcal{E},K, s_1)$, we have that
\begin{equation}
\label{ineq of step 1 in criterion thm}
\cos_\max^\pi = \max_{1 \leq i < j \leq N} \cos (\angle (\pi (k_i),\pi (k_j))) \leq \dfrac{1- \frac{\varepsilon}{2}}{8N-11} 
\end{equation}
By definition for every $s_1 >0$ and every $\pi \in \mathcal{F} (\mathcal{E},K, s_1)$ we have that 
$$\forall 1 \leq i < j \leq N, \sup_{g \in K_{i,j}} \Vert \pi (g) \Vert \leq e^{s_1} .$$
Combining the above inequality with corollary \ref{bounding the norm of pi(f) - corollary}, yields that for every $f \in C_c (K_{i,j})$ we have that 
$$\Vert \pi (f) \Vert_{B(X)} \leq e^{2 s_1} \Vert (\lambda_{i,j} \otimes id_X) (f) \Vert_{B(L^2 (K_{i,j} ;X))}.$$
By the definition of $\cos (\angle (\pi (k_i),\pi (k_j)))$ and $\cos^X (\angle (k_i, k_j))$, this yields that for every $1 \leq i < j \leq N$ we have that
$$\cos (\angle (\pi (k_i),\pi (k_j))) \leq e^{2 s_1} \cos^X (\angle (k_i, k_j)) \leq e^{2 s_1} \left( \frac{1- \varepsilon}{8N-11} \right).$$  
Therefore choosing 
$$s_1 = \dfrac{\ln (1 + \frac{\varepsilon}{2(1-\varepsilon)})}{2},$$
yields inequality \eqref{ineq of step 1 in criterion thm} as needed. \\
\textbf{Step 2:} We'll show that there is $s_2 >0$ such that for every $\pi \in \mathcal{F} (\mathcal{E},K, s_2)$, we have that
\begin{equation}
\label{ineq of step 2 in criterion thm}
 \max_{1 \leq i \leq N} \Vert \pi (k_i) \Vert < 1 + \dfrac{\varepsilon}{8N}.
\end{equation}
By definition for every $s_2 >0$ and every $\pi \in \mathcal{F} (\mathcal{E},K, s_2)$, we have that
$$\forall 1 \leq i \leq N, \sup_{g \in K_i} \Vert \pi (g) \Vert \leq e^{s_2}.$$
By the definition of the functions $k_i$, this yields that 
$$\forall 1 \leq i \leq N, \Vert \pi (k_i) \Vert \leq e^{s_2}.$$
Therefore choosing 
$$s_2 = \ln (1 + \frac{1}{8N}),$$
yields inequality \eqref{ineq of step 2 in criterion thm} as needed. \\
\textbf{Step 3:} To finish, choose $s_0 = \min \lbrace s_1, s_2 \rbrace >0$. Denote
$$\gamma = \dfrac{1- \frac{\varepsilon}{2}}{8N-11}, \beta = 1 + \dfrac{\varepsilon}{8N}.$$
For every $\pi \in \mathcal{F} (\mathcal{E},K,s_0)$, we have that by previous steps
$$\cos_\max^\pi \leq \gamma,$$
$$\max_{1 \leq i \leq N} \Vert \pi (k_i) \Vert \leq \beta.$$
Also note that 
$$\beta \leq 1+ \dfrac{\varepsilon}{8N} < 1+ \dfrac{\varepsilon}{8N-12} \leq 1 + \dfrac{\frac{\varepsilon}{2}}{4N-6} \leq 1+ \dfrac{1 - (8N-11)\gamma}{N-2 + (3N-4)\gamma}.$$
This implies that the conditions of theorem \ref{Quick uniform convergence criterion} are fulfilled for the projections $\pi (k_1),...,\pi (k_N)$ and therefore  there are $0 \leq r = r(\gamma,\beta) <1 , C = C (\gamma,\beta) \geq 0$ and an operator which we denote as $\pi (p)$ such that 
$$\left\Vert \pi (p) - \left( \dfrac{\pi (k_1) + ...+ \pi (k_N)}{N} \right)^n \right\Vert \leq C r^n,$$
and such that $\pi (p)$ is a projection on $\bigcap_{1 \leq i \leq N} Im (\pi (k_i)) = \bigcap_{1 \leq i \leq N} X^{\pi (K_i)} = X^\pi$ (the last equality is due to the fact that $K_1,...,K_N$ generate $G$). Note that the constants that bound the rate of convergence $r,C$ are independent of $\pi$ and therefore $\left( \frac{\pi (k_1) + ...+ \pi (k_N)}{N} \right)^n$ converges in $C_{\mathcal{F} (\mathcal{E},K, s_0)}$.
\end{proof}

Next we shall address the following question: let $\mathcal{E}$ be a class of Banach spaces such that $\cos_\max^{\mathcal{E}} \leq c$, how can we expand $\mathcal{E}$ to a larger class $\mathcal{E}'$ such that $\cos_\max^{\mathcal{E}'} \leq c'$ when $c'$ is a function of $c$. 

First, we note that after finding a class $\mathcal{E}$ with a bound on $\cos_\max^{\mathcal{E}}$, we can assume it is stable under certain operations. To be specific, given a class of Banach spaces $\mathcal{E}$, denote by $\overline{\mathcal{E}}$ to be the smallest class of Banach spaces that contains $\mathcal{E}$ and is stable under quotients, subspaces, $l_2$-sums, ultraproducts and complex interpolation for any $0 < \theta< 1$ of any compatible pair $(X_0,X_1)$ such that $X_0,X_1 \in \overline{\mathcal{E}}$. The next proposition states that a bound on $\cos_\max^{\mathcal{E}}$ implies a bound on $\cos_\max^{\overline{\mathcal{E}}}$:
\begin{proposition}
\label{closure of mathcal(E) prop}
Let $\mathcal{E}$ be a class of Banach spaces and let $c \geq 0$ be some constant. If $\cos_\max^{\mathcal{E}} \leq c$, then $\cos_\max^{\overline{\mathcal{E}}} \leq c$.
\end{proposition}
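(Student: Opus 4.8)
The plan is to show that each of the stabilizing operations—quotients, subspaces, $l_2$-sums, ultraproducts, and complex interpolation of compatible pairs—preserves the bound $\cos^X_\max \le c$, and then invoke the fact that $\overline{\mathcal{E}}$ is generated from $\mathcal{E}$ by iterating these operations. The key observation, already flagged in the remark preceding Lemma \ref{L2 norm stability}, is that $\cos^X(\angle(k_i,k_j))$ is the maximum of two quantities of the form $\Vert T \otimes id_X \Vert_{B(L^2(K_{i,j};X))}$, where $T = \lambda_{i,j}(k_i k_j - k_{i,j})$ or $T = \lambda_{i,j}(k_j k_i - k_{i,j})$ is a fixed bounded operator on $L^2(K_{i,j},\mu)$ that does not depend on $X$. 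So the whole proposition reduces to: the class of Banach spaces $X$ for which $\Vert T \otimes id_X \Vert \le c$ (for this fixed finite collection of operators $T$, one pair for each $1 \le i < j \le N$) is closed under the listed operations.

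First I would fix $i < j$ and the two operators $T^{(1)}_{i,j} = \lambda_{i,j}(k_i k_j - k_{i,j})$, $T^{(2)}_{i,j} = \lambda_{i,j}(k_j k_i - k_{i,j})$ on $L^2(K_{i,j},\mu)$. For the hypothesis $\cos^{\mathcal{E}}_\max \le c$ to hold means precisely that for every $X \in \mathcal{E}$ and every choice of $i<j$ and $\ell \in \{1,2\}$, we have $\Vert T^{(\ell)}_{i,j} \otimes id_X \Vert \le c$. Now Lemma \ref{L2 norm stability} (de la Salle, with the $l_2$-sum case added in the remark) says exactly that, for a fixed operator and a fixed constant $c$, the class of $X$ with $\Vert T \otimes id_X \Vert \le c$ is stable under quotients, subspaces, $l_2$-sums and ultraproducts. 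For complex interpolation, Lemma \ref{interpolation fact} gives $\Vert T \otimes id_{[X_0,X_1]_\theta}\Vert \le \Vert T \otimes id_{X_0}\Vert^{1-\theta}\Vert T \otimes id_{X_1}\Vert^\theta \le c^{1-\theta}c^\theta = c$ whenever both $X_0, X_1$ satisfy the bound. Intersecting over the finitely many pairs $(i,j,\ell)$, the class of $X$ with $\cos^X_\max \le c$ is stable under all five operations.

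To finish, I would argue that $\overline{\mathcal{E}}$, being by definition the smallest class containing $\mathcal{E}$ and closed under these operations, is contained in $\{X : \cos^X_\max \le c\}$ — because the latter is a class that contains $\mathcal{E}$ (by hypothesis) and is closed under the operations (by the previous paragraph). Hence $\cos^{\overline{\mathcal{E}}}_\max = \sup_{X \in \overline{\mathcal{E}}}\cos^X_\max \le c$, which is the claim. One point worth being slightly careful about: the definition of $\overline{\mathcal{E}}$ is self-referential (it allows interpolation of pairs $X_0,X_1 \in \overline{\mathcal{E}}$), so the clean way to phrase it is to let $\mathcal{C} = \{X : \cos^X_\max \le c\}$, observe $\mathcal{E} \subseteq \mathcal{C}$ and that $\mathcal{C}$ is closed under all the operations including interpolation of pairs drawn from $\mathcal{C}$ itself, and then minimality of $\overline{\mathcal{E}}$ gives $\overline{\mathcal{E}} \subseteq \mathcal{C}$ directly.

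I do not expect any real obstacle here; the content is entirely bookkeeping, and all the analytic work is quarantined inside Lemmas \ref{L2 norm stability} and \ref{interpolation fact}. The only thing to get right is the reduction step—making explicit that $\cos^X(\angle(k_i,k_j))$ depends on $X$ only through norms of the form $\Vert T \otimes id_X \Vert$ for $X$-independent operators $T$—and then applying the cited lemmas to each of the finitely many such operators and taking a maximum.
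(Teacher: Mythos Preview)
Your proposal is correct and follows exactly the approach the paper takes: the paper's proof is the one-line instruction ``Combine the definition of $\cos_\max^X$ with lemma \ref{L2 norm stability} and lemma \ref{interpolation fact},'' and you have simply unpacked this combination carefully, including the minimality argument for $\overline{\mathcal{E}}$.
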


\begin{proof}
Combine the definition of $\cos_\max^X$ with lemma \ref{L2 norm stability} and lemma \ref{interpolation fact}.
\end{proof}

Second, we observe that considering a neighbourhood of $\mathcal{E}$ with respect to the Banach-Mazur distance changes $\cos_\max^{\mathcal{E}}$ by the radius of this neighbourhood. To be precise:

\begin{proposition}
\label{neigh. of mathcal(E) prop}
Let $\mathcal{E}$ be a class of Banach spaces and let $c \geq 0, \delta \geq 0$ be some constants. Let 
$B_{BM} (\mathcal{E},\delta)$ be the class of Banach spaces defined as:
$$B_{BM} (\mathcal{E},\delta) = \lbrace X : \exists Y \in \mathcal{E}, d_{BM} (X,Y) \leq 1 + \delta \rbrace.$$
If $\cos_\max^{\mathcal{E}} \leq c$, then $\cos_\max^{B_{BM} (\mathcal{E},\delta)} \leq c(1+\delta)$.
\end{proposition}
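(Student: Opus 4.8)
The plan is to reduce the statement to Lemma \ref{norm of T otimes id using BM}, which is precisely the tool that controls how $\Vert T \otimes id_X \Vert$ changes when $X$ is replaced by a Banach--Mazur neighbour. Recall that, as observed in the remark following the definition of $\cos^X (\angle (k_i, k_j))$, this quantity is the maximum of the two operator norms $\Vert (\lambda_{i,j} \otimes id_X)(k_i k_j - k_{i,j}) \Vert$ and $\Vert (\lambda_{i,j} \otimes id_X)(k_j k_i - k_{i,j}) \Vert$, each of which is of the form $\Vert T \otimes id_X \Vert$ for a fixed bounded operator $T$ on $L^2 (K_{i,j}, \mu)$ (namely $T = \lambda_{i,j}(k_i k_j - k_{i,j})$, resp. $T = \lambda_{i,j}(k_j k_i - k_{i,j})$).

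First I would fix $X \in B_{BM}(\mathcal{E}, \delta)$ and pick $Y \in \mathcal{E}$ with $d_{BM}(X, Y) \leq 1 + \delta$. For each pair $1 \leq i < j \leq N$ and each of the two relevant operators $T$ on $L^2 (K_{i,j}, \mu)$, Lemma \ref{norm of T otimes id using BM} gives $\Vert T \otimes id_X \Vert \leq d_{BM}(X,Y)\, \Vert T \otimes id_Y \Vert \leq (1+\delta)\Vert T \otimes id_Y \Vert$. Taking the maximum over the two operators attached to $(i,j)$ yields $\cos^X (\angle (k_i, k_j)) \leq (1+\delta)\cos^Y (\angle (k_i, k_j))$. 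Then I would take the maximum over $1 \leq i < j \leq N$ to obtain $\cos^X_{\max} \leq (1+\delta)\cos^Y_{\max} \leq (1+\delta)\cos^{\mathcal{E}}_{\max} \leq c(1+\delta)$, and finally take the supremum over all $X \in B_{BM}(\mathcal{E}, \delta)$ to conclude $\cos^{B_{BM}(\mathcal{E}, \delta)}_{\max} \leq c(1+\delta)$.

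I do not expect any genuine obstacle here; this is a direct substitution into Lemma \ref{norm of T otimes id using BM}. The only point worth a word of care is the bookkeeping that $\lambda_{i,j}(k_i k_j - k_{i,j})$ and $\lambda_{i,j}(k_j k_i - k_{i,j})$ are honest bounded operators on $L^2 (K_{i,j}, \mu)$, so that the lemma applies as stated; this holds because $k_i, k_j, k_{i,j} \in C_c (K_{i,j})$ and $\lambda_{i,j}$ is the left regular (unitary) representation, so $\lambda_{i,j}(f)$ is bounded for every $f \in C_c (K_{i,j})$. Everything else is immediate.
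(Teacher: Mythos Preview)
Your proposal is correct and follows exactly the approach of the paper: the paper's proof simply says to combine the definition of $\cos_\max^X$ with Lemma~\ref{norm of T otimes id using BM}, and you have spelled out precisely how that combination goes.
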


\begin{proof}
Combine the definition of $\cos_\max^X$ with lemma \ref{norm of T otimes id using BM}. 
\end{proof}

Third, we observe that taking $\theta$-interpolation of some $X \in \mathcal{E}$ changes $\cos_\max^{\mathcal{E}}$ as a function of $\theta$:

\begin{proposition}
\label{interpolation of mathcal(E) prop}
Let $\mathcal{E}$ be a class of Banach spaces and let $2 > c \geq 0, 0 <\theta \leq 1$ be some constants. Let $Int(\mathcal{E}, \geq \theta)$ be the class of Banach spaces defined as
$$Int(\mathcal{E}, \geq \theta) = \lbrace X : \exists X_1 \in \mathcal{E} \text{ and } X_0 \text{ such that } X = [X_0,X_1]_{\theta'} \text{ for some } \theta' \geq \theta \rbrace. $$
If $\cos_\max^{\mathcal{E}} \leq c$, then $\cos_\max^{Int(\mathcal{E}, \geq \theta)} \leq 2 \left( \frac{c}{2} \right)^{\theta}$.
\end{proposition}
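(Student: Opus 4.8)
The plan is to reduce the statement to a single application of the complex interpolation inequality of Lemma~\ref{interpolation fact} together with a crude universal bound. First I would recall, as in the remark preceding the proposition, that for each pair $1 \le i < j \le N$ one has
$$\cos^X(\angle(k_i,k_j)) = \max\big\{\,\|T_{ij}^{+}\otimes id_X\|,\ \|T_{ij}^{-}\otimes id_X\|\,\big\},$$
where $T_{ij}^{+}=\lambda_{i,j}(k_ik_j-k_{i,j})$ and $T_{ij}^{-}=\lambda_{i,j}(k_jk_i-k_{i,j})$ are fixed bounded operators on $L^2(K_{i,j},\mu)$ that do not depend on $X$. Hence it suffices to bound $\|T\otimes id_X\|$ uniformly for $T\in\{T_{ij}^{\pm}\}$ and $X\in Int(\mathcal{E},\ge\theta)$.

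Next I would fix $X\in Int(\mathcal{E},\ge\theta)$, write $X=[X_0,X_1]_{\theta'}$ for a compatible pair $(X_0,X_1)$ with $X_1\in\mathcal{E}$ and some $\theta'\ge\theta$, and apply Lemma~\ref{interpolation fact}:
$$\|T\otimes id_X\|\le \|T\otimes id_{X_0}\|^{\,1-\theta'}\,\|T\otimes id_{X_1}\|^{\,\theta'}.$$
The factor attached to $X_1$ is controlled by hypothesis: since $X_1\in\mathcal{E}$ we get $\|T\otimes id_{X_1}\|\le \cos^{X_1}_\max\le \cos^{\mathcal{E}}_\max\le c$. For the factor attached to $X_0$ I would use that $\lambda_{i,j}\otimes id_{X_0}$ is an isometric representation of $K_{i,j}$ on $L^2(K_{i,j};X_0)$, so that $(\lambda_{i,j}\otimes id_{X_0})(k_i)$, $(\lambda_{i,j}\otimes id_{X_0})(k_j)$ and $(\lambda_{i,j}\otimes id_{X_0})(k_{i,j})$ all have norm at most $1$ (each is of the form $\sigma(f)$ with $\sigma$ isometric and $\int|f|\,d\mu=1$); therefore $\|T\otimes id_{X_0}\|\le 1+1=2$ for \emph{every} Banach space $X_0$. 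Substituting gives
$$\|T\otimes id_X\|\le 2^{1-\theta'}c^{\theta'}=2\Big(\tfrac c2\Big)^{\theta'}.$$

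Finally I would invoke the hypothesis $0\le c<2$: this forces $0\le c/2<1$, so $t\mapsto (c/2)^t$ is non-increasing, and $\theta'\ge\theta$ gives $(c/2)^{\theta'}\le (c/2)^{\theta}$, whence $\|T\otimes id_X\|\le 2(c/2)^{\theta}$. Taking the maximum over $T\in\{T_{ij}^{\pm}\}$ and over all pairs $1\le i<j\le N$ yields $\cos^X_\max\le 2(c/2)^{\theta}$, and taking the supremum over $X\in Int(\mathcal{E},\ge\theta)$ gives the assertion. I do not expect a genuine obstacle: the only two points that need a line of justification are the universal estimate $\|T\otimes id_{X_0}\|\le 2$ for an arbitrary $X_0$ (which is precisely what keeps the $(1-\theta')$-power harmless, since $X_0$ is not assumed to lie in any controlled class), and the use of $c<2$ to trade the exponent $\theta'$ for the smaller admissible exponent $\theta$.
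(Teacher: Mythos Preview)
Your proposal is correct and follows essentially the same route as the paper's proof: bound $\|T\otimes id_{X_0}\|\le 2$ for any Banach space $X_0$ via the isometry of $\lambda_{i,j}\otimes id_{X_0}$ and the triangle inequality, bound $\|T\otimes id_{X_1}\|\le c$ from the hypothesis on $\mathcal{E}$, apply the interpolation inequality of Lemma~\ref{interpolation fact}, and then use $c<2$ to pass from the exponent $\theta'$ to $\theta$. You have also made explicit the role of the hypothesis $c<2$, which the paper uses implicitly in the step $2^{1-\theta'}c^{\theta'}\le 2^{1-\theta}c^{\theta}$.
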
  

\begin{proof}
Note that for any Banach space $X$, we have for every $1 \leq i < j \leq N$ that
\begin{dmath*}
{\Vert (\lambda_{i,j} \otimes id_X) (k_i).v \Vert = \left\Vert \int_{g \in K_i}  \dfrac{(\lambda_{i,j} \otimes id_X) (g).v}{\mu (K_i)} d \mu (g) \right\Vert \leq}\\
 \int_{g \in K_i}  \dfrac{\Vert (\lambda_{i,j} \otimes id_X) (g).v \Vert}{\mu (K_i)} d \mu (g) = \int_{g \in K_i}  \dfrac{\Vert v \Vert}{\mu (K_i)} d \mu (g) = \Vert v \Vert. 
\end{dmath*} 
Therefore $$\Vert (\lambda_{i,j} \otimes id_X) (k_i) \Vert \leq 1$$ 
and similarly 
$$\Vert (\lambda_{i,j} \otimes id_X) (k_j) \Vert \leq 1, \Vert (\lambda_{i,j} \otimes id_X) (k_{i,j}) \Vert \leq 1.$$
This yields that for every $1 \leq i < j \leq N$ we have that 
$$\Vert \lambda_{i,j} (k_i k_j - k_{i,j}) \otimes id_X  \Vert \leq 2, \Vert \lambda_{i,j} (k_j k_i - k_{i,j}) \otimes id_X  \Vert \leq 2.$$ 
Let $X_1 \in \mathcal{E}$ and $X_0$ be a Banach space such that $(X_0,X_1)$ are a compatible pair. By lemma \ref{interpolation fact} we have that for every $\theta \leq \theta'  \leq 1$ and every $1 \leq i < j \leq N$ that 
\begin{dmath*}
{\Vert \lambda_{i,j} (k_i k_j - k_{i,j}) \otimes id_{[X_0,X_1]_{\theta '}}  \Vert \leq} \\ {\Vert \lambda_{i,j} (k_i k_j - k_{i,j}) \otimes id_{X_0}  \Vert^{1-\theta'} \Vert \lambda_{i,j} (k_i k_j - k_{i,j}) \otimes id_{X_0}  \Vert^{\theta'} \leq} \\ 2^{1-\theta'} c^{\theta '} \leq 2^{1-\theta} c^{\theta } = 2 \left(\dfrac{c}{2} \right)^{\theta}.   
\end{dmath*}
Similarly,
$$\Vert \lambda_{i,j} (k_j k_i - k_{i,j}) \otimes id_{[X_0,X_1]_{\theta '}}  \Vert \leq 2 \left(\dfrac{c}{2} \right)^{\theta}, $$
and we are done by the definition of $\cos_\max^{[X_0,X_1]_{\theta '}}$.
\end{proof}

Combining all the above propositions yields the following:
\begin{corollary}
\label{enlarging mathcal(E) corollary}
Let $G$ be as above and let $\mathcal{E}$ be a class of Banach spaces. Assume that there is a constant $c \geq 0$ such that
$$\cos^{\mathcal{E}}_\max  \leq c < \frac{1}{8N-11} .$$
Let $c'$ be a constant such that $c \leq c' <  \frac{1}{8N-11}$. Denote $\delta = \frac{c'}{c} -1, \theta = \frac{\ln (2)- \ln (c')}{\ln (2) - \ln (c) }$ and
$$\mathcal{E} ' = \overline{B_{BM} (\mathcal{E}, \delta) \cup Int(\mathcal{E}, \geq \theta)}.$$
Then
$$\cos^{\mathcal{E}'}_\max  \leq c' < \frac{1}{8N-11} ,$$
and there is  $s_0 >0$ such that the sequence $(\frac{k_1 + ... +k_N}{N} )^n$ converges to $p$  in $C_{\mathcal{F} (\mathcal{E}',K,s_0)}$  as $n \rightarrow \infty$ and $\forall \pi \in \mathcal{F} (\mathcal{E}',K, s_0)$, $\pi (p)$ is a projection on $X^\pi$.
\end{corollary}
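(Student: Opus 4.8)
The plan is to simply chain together Propositions \ref{closure of mathcal(E) prop}, \ref{neigh. of mathcal(E) prop}, \ref{interpolation of mathcal(E) prop} and Theorem \ref{mSBT criterion via regular rep}, and then do the bookkeeping on the constants $\delta$ and $\theta$ to check they produce exactly the bound $c'$. First I would verify the two numerical claims:

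\begin{enumerate}
\item By Proposition \ref{neigh. of mathcal(E) prop}, since $\cos_\max^{\mathcal{E}} \leq c$, we get $\cos_\max^{B_{BM}(\mathcal{E},\delta)} \leq c(1+\delta)$. With $\delta = \frac{c'}{c} - 1$ this is exactly $c(1+\delta) = c \cdot \frac{c'}{c} = c'$.
\item By Proposition \ref{interpolation of mathcal(E) prop} (which applies because $c < \frac{1}{8N-11} < 2$), since $\cos_\max^{\mathcal{E}} \leq c$, we get $\cos_\max^{Int(\mathcal{E}, \geq \theta)} \leq 2(\frac{c}{2})^{\theta}$. With $\theta = \frac{\ln 2 - \ln c'}{\ln 2 - \ln c}$ one computes $\theta(\ln 2 - \ln c) = \ln 2 - \ln c'$, hence $2(\frac{c}{2})^\theta = 2 \cdot 2^{-\theta} c^{\theta} = 2 \cdot \exp\big(-\theta(\ln 2 - \ln c)\big) = 2 \exp(-(\ln 2 - \ln c')) = 2 \cdot \frac{c'}{2} = c'$. (Here one should note $0 < \theta \leq 1$ since $c \leq c' < 2$, so the proposition is indeed applicable.)
\end{enumerate}

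Thus both $B_{BM}(\mathcal{E},\delta)$ and $Int(\mathcal{E}, \geq \theta)$ have $\cos_\max \leq c'$, and therefore so does their union $B_{BM}(\mathcal{E},\delta) \cup Int(\mathcal{E}, \geq \theta)$, since $\cos_\max^{\mathcal{E}_1 \cup \mathcal{E}_2} = \max\{\cos_\max^{\mathcal{E}_1}, \cos_\max^{\mathcal{E}_2}\}$ directly from the definition of $\cos_\max^{\mathcal{E}}$ as a supremum over the class. Applying Proposition \ref{closure of mathcal(E) prop} to this union then gives $\cos_\max^{\mathcal{E}'} \leq c'$, where $\mathcal{E}' = \overline{B_{BM}(\mathcal{E},\delta) \cup Int(\mathcal{E}, \geq \theta)}$. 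Since $c' < \frac{1}{8N-11}$ by hypothesis, we may write $c' = \frac{1-\varepsilon}{8N-11}$ for some $\varepsilon > 0$, so the hypothesis of Theorem \ref{mSBT criterion via regular rep} is met for the class $\mathcal{E}'$, and the conclusion about the existence of $s_0 > 0$, the convergence of $(\frac{k_1 + \cdots + k_N}{N})^n$ to $p$ in $C_{\mathcal{F}(\mathcal{E}',K,s_0)}$, and $\pi(p)$ being a projection onto $X^\pi$ for all $\pi \in \mathcal{F}(\mathcal{E}',K,s_0)$ follows immediately.

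The proof is essentially routine once the constants are identified; there is no real obstacle, only the minor care needed to (i) confirm $\theta \in (0,1]$ so that Proposition \ref{interpolation of mathcal(E) prop} applies, (ii) observe that $\cos_\max$ of a union of classes is the max of the two values — which is immediate from the definition — and (iii) check the strict inequality $c' < \frac{1}{8N-11}$ is preserved so Theorem \ref{mSBT criterion via regular rep} can be invoked. The only mild subtlety is making sure $\mathcal{E} \subseteq \mathcal{E}'$ (so this genuinely enlarges the class): this holds because $\mathcal{E} \subseteq B_{BM}(\mathcal{E},\delta)$ via the identity isomorphism (taking $Y = X$, $d_{BM}(X,X) = 1 \leq 1+\delta$), and $\overline{(\cdot)}$ only enlarges further.
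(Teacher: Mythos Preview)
Your proposal is correct and follows exactly the approach indicated in the paper, which simply says to combine Propositions \ref{closure of mathcal(E) prop}, \ref{neigh. of mathcal(E) prop}, \ref{interpolation of mathcal(E) prop} and Theorem \ref{mSBT criterion via regular rep}. Your write-up is in fact more detailed than the paper's one-line proof: you explicitly verify the arithmetic showing that the choices of $\delta$ and $\theta$ yield the bound $c'$, check that $\theta\in(0,1]$, and spell out why $\cos_\max$ of a union is the maximum --- all of which the paper leaves implicit.
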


\begin{proof}
Combing propositions \ref{closure of mathcal(E) prop}, \ref{neigh. of mathcal(E) prop}, \ref{interpolation of mathcal(E) prop} and theorem \ref{mSBT criterion via regular rep}.
\end{proof}

The above corollary gives us a way to get a class of Banach spaces $\mathcal{E}'$ for which $G$ has robust Banach property (T) providing that we have a class of of Banach spaces $\mathcal{E}$ such that $\cos^{\mathcal{E}}_\max $ is small. We are left with the question of how to produce such a class $\mathcal{E}$. Below we shall describe two methods to do so based on our knowledge of unitary representations of $K_{i,j}'s$ in Hilbert spaces. These methods can be summarized as follows:
\begin{itemize}
\item Method 1: take $\mathcal{E} = \mathcal{H}$ as the class of all Hilbert spaces. In this case $\cos^{\mathcal{H}}_\max $ can be bounded via analysing the classical Friedrichs angles between fixed subspaces in Hilbert spaces. This in turn can be done via analysing angles in irreducible representations of $K_{i,j}$.
\item Method 2: having knowledge on all the eigenvalues of $\pi (k_i k_j - k_{i,j})$ (and not just the norm) in any unitary representation $\pi$ allows us to pass to a richer class of Banach spaces via the Schatten norm of $\lambda_{i,j} (k_i k_j - k_{i,j})$. This method is taken from the work of de la Salle \cite{Salle}.
\end{itemize}
Next, we shall give a detailed account on each method.
\subsection{Robust Banach property (T) via bounding $\cos^{\mathcal{H}}_\max $}
Let $\mathcal{H}$ be the class of all Hilbert spaces. Bounding  $\cos^{\mathcal{H}}_\max $ is achieved by classical  Friedrichs angles in Hilbert spaces, by the following observation:
\begin{observation}
Notice that for any $H \in \mathcal{H}$, we have that for any $1 \leq i < j \leq N$, $L^2 (K_{i,j} ; H)$ is a Hilbert space and $\lambda_{i,j} \otimes id_H$ is a unitary representation on this space. Also note that since $\lambda_{i,j} (k_i) \otimes id_H$, $\lambda_{i,j} (k_i) \otimes id_H$, $\lambda_{i,j} (k_{i,j}) \otimes id_H$ are all projections of norm $1$ and therefore they are orthogonal projections. Therefore for any $1 \leq i < j \leq N$, bounding 
$$\cos^{\mathcal{H}} (\angle (k_i, k_j)) = \sup_{H \in \mathcal{H}} \cos^H (\angle (k_i, k_j))$$ 
boils down to bounding the (classical) Friedrichs angle $\cos (\angle (H^{\pi (K_i)}, H^{\pi (K_j)}))$ for any unitary representation $\pi$ on some Hilbert space $H$. 
\end{observation}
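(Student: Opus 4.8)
The plan is to check that, for Hilbert-space coefficients, the constructions of the previous section reduce to the classical theory of Friedrichs angles; the verification splits into three routine observations together with an appeal to the remark treating the Hilbert-space case of the Friedrichs angle between projections. Fix a Hilbert space $H$ and indices $1\le i<j\le N$, and abbreviate $H_{i,j}:=L^2(K_{i,j};H)$ and $\rho:=\lambda_{i,j}\otimes id_H$.

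First I would note that $H_{i,j}$, with the inner product $\langle f,g\rangle=\int_{K_{i,j}}\langle f(\omega),g(\omega)\rangle_H\,d\mu(\omega)$, is a Hilbert space: the associated norm is the one defining $L^2(K_{i,j};H)$ and this space is complete (concretely, it is the Hilbert tensor product $L^2(K_{i,j},\mu)\otimes H$). Since $K_{i,j}$ is compact it is unimodular, so $\lambda_{i,j}$ is a \emph{unitary} representation of $K_{i,j}$ on $L^2(K_{i,j},\mu)$; hence each $\lambda_{i,j}(g)\otimes id_H$ is unitary on $H_{i,j}$, strong continuity passes to $\rho$ (approximate an arbitrary vector of $H_{i,j}$ by finite sums of elementary tensors), and $\rho$ is a unitary representation of $K_{i,j}$ on $H_{i,j}$.

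Next I would apply Proposition~\ref{pi (k_i) are projections}, with the group there taken to be $K_{i,j}$ (which satisfies the standing hypotheses of this section, with $N=2$, since $\langle K_i,K_j\rangle=K_{i,j}$ is compact) and the representation taken to be $\rho$. This gives that $(\lambda_{i,j}\otimes id_H)(k_i)$, $(\lambda_{i,j}\otimes id_H)(k_j)$ and $(\lambda_{i,j}\otimes id_H)(k_{i,j})$ are idempotents with images $H_{i,j}^{\rho(K_i)}$, $H_{i,j}^{\rho(K_j)}$ and $H_{i,j}^{\rho(K_{i,j})}=H_{i,j}^{\rho(K_i)}\cap H_{i,j}^{\rho(K_j)}$ respectively, and that $(\lambda_{i,j}\otimes id_H)(k_{i,j})$ composes on the right with $(\lambda_{i,j}\otimes id_H)(k_i)$ and with $(\lambda_{i,j}\otimes id_H)(k_j)$ to give itself. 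From the opening lines of the proof of Proposition~\ref{interpolation of mathcal(E) prop}, each of these three idempotents has norm $\le 1$; and a contractive idempotent $P$ on a Hilbert space is an orthogonal projection, since a vector in $Im(P)$ is the element of least norm in its coset modulo $\ker(P)$, forcing $Im(P)\perp\ker(P)$. Hence these three operators are the \emph{orthogonal} projections onto $H_{i,j}^{\rho(K_i)}$, $H_{i,j}^{\rho(K_j)}$ and $H_{i,j}^{\rho(K_i)}\cap H_{i,j}^{\rho(K_j)}$, so in the definition of $\cos(\angle((\lambda_{i,j}\otimes id_H)(k_i),(\lambda_{i,j}\otimes id_H)(k_j)))$ one may take for $P_{1,2}$ the orthogonal projection onto the intersection of the two ranges.

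Finally I would quote the remark, made just after the definition of the Friedrichs angle between projections, stating that for orthogonal projections $P_1,P_2$ onto $M_1,M_2$ in a Hilbert space (with $P_{1,2}$ the orthogonal projection onto $M_1\cap M_2$) one has $\cos(\angle(P_1,P_2))=\cos(\angle(M_1,M_2))$, the classical Friedrichs angle. Applied to $P_1=(\lambda_{i,j}\otimes id_H)(k_i)$, $P_2=(\lambda_{i,j}\otimes id_H)(k_j)$ this yields
\[
\cos^H(\angle(k_i,k_j))=\cos\big(\angle(H_{i,j}^{\rho(K_i)},H_{i,j}^{\rho(K_j)})\big),
\]
which is precisely the classical Friedrichs angle between the $K_i$- and $K_j$-fixed subspaces in the unitary representation $\rho=\lambda_{i,j}\otimes id_H$ of $K_{i,j}$. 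Taking the supremum over all Hilbert spaces $H$ then exhibits $\cos^{\mathcal H}(\angle(k_i,k_j))$ as a supremum of classical Friedrichs angles of this form, which is the reduction asserted in the observation. I do not expect any genuine obstacle: the only non-formal inputs are the standard facts that a contractive idempotent on a Hilbert space is self-adjoint and that $\cos(\angle(P_1,P_2))$ agrees with the Friedrichs angle of the two ranges in the Hilbert case. The one point requiring a little care — and not actually needed for the statement as phrased — is the matching lower bound, for which one checks that every unitary representation of the compact group $K_{i,j}$ embeds (up to multiplicity) in some $\lambda_{i,j}\otimes id_{H'}$ and that restricting to an invariant subspace does not increase the cosine of the Friedrichs angle.
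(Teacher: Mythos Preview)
Your proposal is correct and follows exactly the reasoning the paper intends; the paper states this as an observation without proof, and you have simply supplied the routine verifications (Hilbert structure of $L^2(K_{i,j};H)$, unitarity of $\lambda_{i,j}\otimes id_H$, contractive idempotents being orthogonal, and the remark identifying $\cos(\angle(P_1,P_2))$ with the classical Friedrichs angle) that the reader is implicitly expected to carry out. Your closing remark about the matching lower bound is also accurate: only the upper-bound direction is used in the subsequent theorem, and indeed each $\lambda_{i,j}\otimes id_H$ is itself a unitary representation of $K_{i,j}$, which is all that is needed.
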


Combining the above observation with corollary \ref{enlarging mathcal(E) corollary} gives the following theorem:
\begin{theorem}
\label{mSBT from angles in unitary rep theorem}
Let $G$ be as above. Assume that there is some constant $c < \frac{1}{8N-11}$ such that for any $1 \leq i < j \leq N$ and any unitary representation of $K_{i,j}$ on a Hilbert space $H$ we have that $\cos (\angle (H^{\pi (K_i)}, H^{\pi (K_j)})) \leq c$. 
Let $c'$ be a constant such that $c \leq c' <  \frac{1}{8N-11}$. Denote $\delta = \frac{c'}{c} -1, \theta = \frac{\ln (2)- \ln (c')}{\ln (2) - \ln (c) }$ and
$$\mathcal{E}  = \overline{B_{BM} (\mathcal{H}, \delta) \cup Int(\mathcal{H}, \geq \theta)},$$
where $Int(\mathcal{H}, \geq \theta)$ is the class of all the $\theta'$-Hilbertian Banach spaces with $\theta ' \geq \theta$ and $B_{BM} (\mathcal{H}, \delta)$ is the class of all the spaces $X$ isomorphic to some Hilbert space $H=H (X)$ such that $d_{BM} (X,H) \leq 1+\delta$.
Then there is  $s_0 >0$ such that the sequence $(\frac{k_1 + ... +k_N}{N} )^n$ converges to $p$  in $C_{\mathcal{F} (\mathcal{E},K,s_0 )}$  as $n \rightarrow \infty$ and $\forall \pi \in \mathcal{F} (\mathcal{E},K, s_0)$, $\pi (p)$ is a projection on $X^\pi$. 
\end{theorem}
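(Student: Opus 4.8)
The plan is to deduce Theorem \ref{mSBT from angles in unitary rep theorem} directly from Corollary \ref{enlarging mathcal(E) corollary} by verifying that the hypothesis of that corollary holds for the class $\mathcal{E} = \mathcal{H}$ of all Hilbert spaces, namely that $\cos^{\mathcal{H}}_\max \leq c < \frac{1}{8N-11}$. Once this is established, the corollary (applied with $\mathcal{H}$ in the role of $\mathcal{E}$, and with the same $c,c',\delta,\theta$ and resulting class $\mathcal{E}' = \overline{B_{BM}(\mathcal{H},\delta) \cup Int(\mathcal{H},\geq\theta)}$) immediately gives the existence of $s_0>0$, the convergence of $\left(\frac{k_1+\dots+k_N}{N}\right)^n$ to $p$ in $C_{\mathcal{F}(\mathcal{E},K,s_0)}$, and the fact that $\pi(p)$ projects onto $X^\pi$ for every $\pi \in \mathcal{F}(\mathcal{E},K,s_0)$. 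So the entire content of the proof is the identification of $\cos^{\mathcal{H}}_\max$ with a supremum of classical Friedrichs angles, which is exactly the Observation preceding the theorem.

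First I would spell out the Observation carefully. Fix $1 \leq i < j \leq N$ and a Hilbert space $H$. Then $L^2(K_{i,j};H)$ is again a Hilbert space, and $\lambda_{i,j}\otimes id_H$ is a unitary representation of $K_{i,j}$ on it. By Proposition \ref{pi (k_i) are projections} applied to the representation $\pi = \lambda_{i,j}\otimes id_H$ of $G = K_{i,j}$, the operators $(\lambda_{i,j}\otimes id_H)(k_i)$, $(\lambda_{i,j}\otimes id_H)(k_j)$, $(\lambda_{i,j}\otimes id_H)(k_{i,j})$ are idempotents onto $M_i := L^2(K_{i,j};H)^{\pi(K_i)}$, $M_j := L^2(K_{i,j};H)^{\pi(K_j)}$, and $M_i\cap M_j$ respectively. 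As computed in Proposition \ref{interpolation of mathcal(E) prop}, each of these idempotents has norm $\leq 1$, hence norm exactly $1$ (being nonzero projections, assuming the fixed spaces are nonzero; the degenerate cases are handled trivially), and a norm-one idempotent on a Hilbert space is an orthogonal projection. Therefore $P_{M_i\cap M_j}$ satisfies $P_{M_i\cap M_j}P_{M_i} = P_{M_i\cap M_j} = P_{M_i\cap M_j}P_{M_j}$, so the hypotheses in the definition of $\cos(\angle(P_1,P_2))$ are met, and by the Remark relating that definition to the classical Friedrichs angle in Hilbert spaces we get
$$\cos^H(\angle(k_i,k_j)) = \cos(\angle(M_i,M_j)),$$
the classical Friedrichs angle between the fixed subspaces.

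Next I would reduce the Friedrichs angle in $L^2(K_{i,j};H)$ to one in a plain Hilbert space. There is a unitary identification $L^2(K_{i,j};H) \cong L^2(K_{i,j})\otimes H$ under which $\lambda_{i,j}\otimes id_H$ corresponds to $\lambda_{i,j}\otimes id$; this is a unitary representation of $K_{i,j}$ on a Hilbert space, and its $K_i$-fixed and $K_j$-fixed subspaces are $H^{\lambda_{i,j}(K_i)}\otimes H$-type spaces. More to the point, $\lambda_{i,j}\otimes id_H$ \emph{is} a unitary representation of $K_{i,j}$, so $\cos(\angle(M_i,M_j)) \leq \sup_\rho \cos(\angle(H_\rho^{\rho(K_i)},H_\rho^{\rho(K_j)}))$, where the supremum runs over all unitary representations $\rho$ of $K_{i,j}$ on all Hilbert spaces; and by the hypothesis of the theorem this supremum is $\leq c$. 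Taking the supremum over $H \in \mathcal{H}$ and then the maximum over $1 \leq i < j \leq N$ gives $\cos^{\mathcal{H}}_\max \leq c < \frac{1}{8N-11}$, which is precisely the hypothesis of Corollary \ref{enlarging mathcal(E) corollary} with $\mathcal{E} = \mathcal{H}$.

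Finally I would invoke Corollary \ref{enlarging mathcal(E) corollary}. With $\mathcal{E} = \mathcal{H}$, $\delta = \frac{c'}{c}-1$ and $\theta = \frac{\ln(2)-\ln(c')}{\ln(2)-\ln(c)}$, the corollary produces $\mathcal{E}' = \overline{B_{BM}(\mathcal{H},\delta)\cup Int(\mathcal{H},\geq\theta)}$ with $\cos^{\mathcal{E}'}_\max \leq c' < \frac{1}{8N-11}$ and the desired convergence statement; one only needs to note that $Int(\mathcal{H},\geq\theta)$ is exactly the class of $\theta'$-Hilbertian spaces with $\theta'\geq\theta$ (since $X_1$ ranges over Hilbert spaces, this is the definition of strictly $\theta'$-Hilbertian from the Background section), and that $B_{BM}(\mathcal{H},\delta)$ is the class of spaces within Banach--Mazur distance $1+\delta$ of a Hilbert space, matching the statement. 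This is where the $\mathcal{E}$ of the theorem statement agrees verbatim with the $\mathcal{E}'$ of the corollary, completing the proof.

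I expect the only genuinely delicate point to be the passage from a norm-one idempotent to an orthogonal projection and the resulting identity $P_{M_i\cap M_j}P_{M_i} = P_{M_i\cap M_j}$ in the Hilbert setting — i.e., checking that the abstract "Friedrichs angle between projections" really does collapse to the classical notion here; but this is exactly what the Remark following the definition of the angle between projections records, so it is more a matter of careful citation than of new work. Everything else — the unitarity of $\lambda_{i,j}\otimes id_H$, the norm-$\leq 1$ bounds on the relevant idempotents, and the bookkeeping of $\delta$ and $\theta$ — is routine and already available from the results quoted above.
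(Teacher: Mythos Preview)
Your proposal is correct and follows essentially the same approach as the paper: the paper's proof is simply ``Combining the above observation with corollary \ref{enlarging mathcal(E) corollary} gives the following theorem,'' and you have faithfully unpacked both the Observation (that $\cos^{\mathcal{H}}_\max$ reduces to classical Friedrichs angles because the relevant idempotents are norm-one, hence orthogonal, projections on the Hilbert space $L^2(K_{i,j};H)$) and the invocation of Corollary \ref{enlarging mathcal(E) corollary} with $\mathcal{E}=\mathcal{H}$. Your extra care in citing where the norm-$\leq 1$ bound appears and in noting that $\lambda_{i,j}\otimes id_H$ is itself a unitary representation of $K_{i,j}$ (so the hypothesis applies directly) is exactly right.
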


This theorem has a nice corollary regarding fixed point properties:
\begin{corollary}
\label{fixed point corollary from angles in unitary rep}
Let $G$ be as above. Assume that there is some constant $c < \frac{1}{8N-11}$ such that for any $1 \leq i < j \leq N$ and any unitary representation of $K_{i,j}$ on a Hilbert space $H$ we have that $\cos (\angle (H^{\pi (K_i)}, H^{\pi (K_j)})) \leq c$. Denote $\delta = \frac{1}{c(8N-11)} -1$, $\theta = \frac{\ln (2) + \ln (8N-11)}{\ln (2) - \ln (c)}$. If $X$ is a Banach space of one of the following types:
\begin{enumerate}
\item $X$ is isomorphic to a Hilbert space $H$ with $d_{BM} (X,H) < 1+\delta$.
\item $X$ is $\theta'$-Hilbertian with $\theta' > \theta$.
\end{enumerate}
Then $G$ has property $F_X$, i.e., every continuous affine isometric action of $G$ on $X$ has a fixed point.
\end{corollary}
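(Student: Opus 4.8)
The corollary is obtained by combining Theorem~\ref{mSBT from angles in unitary rep theorem} with Proposition~\ref{fixed point proposition}. Concretely, the plan is: (i) choose the auxiliary parameter $c'$ so that Theorem~\ref{mSBT from angles in unitary rep theorem} applies to a class $\mathcal E$ that contains both $\mathbb C$ and $X$; (ii) deduce from the $\ell_2$-stability of $\mathcal E$ that $\mathbb C\oplus X$ (with the $\ell_2$-norm) also lies in $\mathcal E$; (iii) read off that $G$ has robust Banach property (T) with respect to the single space $\mathbb C\oplus X$, and invoke Proposition~\ref{fixed point proposition}.

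For step (i), I would fix the Banach space $X$ and pick a constant $c'$ with $c\le c'<\tfrac{1}{8N-11}$; the freedom to keep $c'$ strictly below $\tfrac{1}{8N-11}$ is exactly what the strict inequalities in the hypotheses on $X$ buy us. Writing $\delta(c')=\tfrac{c'}{c}-1$ and $\theta(c')=\tfrac{\ln 2-\ln c'}{\ln 2-\ln c}$ as in Theorem~\ref{mSBT from angles in unitary rep theorem}, one checks that $1+\delta(c')=\tfrac{c'}{c}\nearrow\tfrac{1}{c(8N-11)}=1+\delta$ and $\theta(c')\searrow\tfrac{\ln 2+\ln(8N-11)}{\ln 2-\ln c}=\theta$ as $c'\nearrow\tfrac{1}{8N-11}$. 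Hence in case (1), since $d_{BM}(X,H)<1+\delta$, one may choose $c'$ with $d_{BM}(X,H)\le 1+\delta(c')$, so $X\in B_{BM}(\mathcal H,\delta(c'))$; and in case (2), since $\theta'>\theta$, one may choose $c'$ with $\theta(c')\le\theta'$, so $X\in Int(\mathcal H,\ge\theta(c'))$. In either case set $\mathcal E=\overline{B_{BM}(\mathcal H,\delta(c'))\cup Int(\mathcal H,\ge\theta(c'))}$, so that Theorem~\ref{mSBT from angles in unitary rep theorem} produces $s_0>0$ with the stated convergence of $\bigl(\tfrac{k_1+\dots+k_N}{N}\bigr)^{n}$ in $C_{\mathcal F(\mathcal E,K,s_0)}$.

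For steps (ii)--(iii): the functions $f_n:=\bigl(\tfrac{k_1+\dots+k_N}{N}\bigr)^{n}$ are real, symmetric and of integral $1$ (being convolution powers of a single real symmetric function of integral $1$), and $K=\bigcup_{i<j}K_{i,j}$ is a symmetric compact generating set of $G$; thus Theorem~\ref{mSBT from angles in unitary rep theorem} says precisely that $G$ has robust Banach property (T) with respect to $\mathcal E$ in the sense of Definition~\ref{robust property T definition - compact generation}. Now $\mathbb C$ is a Hilbert space, hence lies in $B_{BM}(\mathcal H,\delta(c'))\subseteq\mathcal E$, and $X\in\mathcal E$ by step (i); since $\mathcal E$ is closed under $\ell_2$-sums, $\mathbb C\oplus X$ with the $\ell_2$-norm lies in $\mathcal E$. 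Restricting the conclusion of robust Banach property (T) to the single representation space $\mathbb C\oplus X$, Proposition~\ref{fixed point proposition} yields property $F_X$.

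The only part that is not pure bookkeeping is the parameter chase in step (i): one must verify that the open conditions $d_{BM}(X,H)<1+\delta$ and $\theta'>\theta$ really do leave a legitimate choice $c'<\tfrac{1}{8N-11}$ placing $X$ (and hence $\mathbb C\oplus X$) strictly inside the corresponding enlargement of $\mathcal H$. I expect this to be the only spot requiring any care; recognising the conclusion of Theorem~\ref{mSBT from angles in unitary rep theorem} as robust Banach property (T) and passing from a class to a single member are then immediate.
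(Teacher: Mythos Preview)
Your proposal is correct and follows essentially the same approach as the paper: combine Theorem~\ref{mSBT from angles in unitary rep theorem} with Proposition~\ref{fixed point proposition}, using that $\mathbb{C}\in\mathcal{E}$ and that $\mathcal{E}$ is closed under $\ell_2$-sums. You are in fact more careful than the paper, which glosses over the parameter chase entirely; your observation that the strict inequalities $d_{BM}(X,H)<1+\delta$ and $\theta'>\theta$ are exactly what permit a choice of $c'<\tfrac{1}{8N-11}$ (since the corollary's $\delta,\theta$ are the limiting values as $c'\nearrow\tfrac{1}{8N-11}$) is a genuine clarification of a point the paper leaves implicit.
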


\begin{proof}
Note that in the above theorem $\mathcal{E}$ contain every Hilbert space and in particular $\mathbb{C} \in \mathcal{E}$. Also note that $\mathcal{E}$ is closed under $l_2$ sums. Therefore we get the corollary by combining the above theorem with proposition \ref{fixed point proposition}.
\end{proof}

Last, we'll make two remark regarding bounding $\cos (\angle (H^{\pi (K_i)}, H^{\pi (K_j)}))$ for some fixed $1 \leq i < j \leq N$.
\begin{remark}
Observe that due to Peter-Weyl theorem, if for any irreducible unitary representation $\pi$ we have that 
$$\cos (\angle (H^{\pi (K_i)}, H^{\pi (K_j)})) \leq c,$$
then for any unitary representation $\pi$ we have
$$\cos (\angle (H^{\pi (K_i)}, H^{\pi (K_j)})) \leq c.$$
Therefore, it is enough to bound the angle for irreducible representations. 
\end{remark}

\subsection{Robust Banach property (T) via Schatten norms}
We start by recalling the following definitions: for a Hilbert space $H$ and a bounded operator $T \in B(H)$ and a constant $r \in [1,\infty]$, the $r$-th Schatten norm is defined as 
$$r < \infty, \Vert T \Vert_{S^r} = \left( \sum_{i=1}^\infty (s_i (T))^r \right)^{\frac{1}{r}},$$
$$\Vert T \Vert_{S^\infty} = \sup \lbrace s_1 (T), ...\rbrace,$$ 
where $s_1 (T) \geq s_2 (T) \geq ...$ are the eigenvalues of $\sqrt{T^* T}$. An operator $T$ is said to be of Schatten class $r$ if $\Vert T \Vert_{S^r} < \infty$. 

In \cite{Salle} the following proposition is proved:
\begin{proposition}\cite{Salle}[Proposition 3.3]
\label{Schatten norms prop from Salle}
Let $1 < p_1 \leq 2 \leq p_2 < \infty$ and let $r \in [2, \infty)$ such that $\frac{1}{p_1} - \frac{1}{p_2} < \frac{1}{r}$. There is a constant $M=M(p_1,p_2,r) \geq 0$ such that the following holds. If $X$ is a Banach space of type $p_1$ and cotype $p_2$,  $(\Omega, \mu)$ is a measure space and  $T \in B(L^2 (\Omega,\mu))$ of Schatten class $r$, then
$$\Vert T \otimes id_X \Vert_{B(L^2(\Omega ; X))} \leq M T_{p_1} (X) C_{p_2} (X) \Vert T \Vert_{S^r}.$$  
\end{proposition}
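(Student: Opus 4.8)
The plan is to reduce to a finite-rank operator, use the singular value decomposition to present $T\otimes\mathrm{id}_X$ as a composition of simple ``model'' maps, and estimate the pieces separately; the arithmetic relation $\frac1{p_1}-\frac1{p_2}<\frac1r$ will enter exactly once, through a Hölder inequality on the singular values, while the type and cotype of $X$ will be used to control the two outer factors. An auxiliary ingredient throughout is that the desired inequality is of the form $\Vert T\otimes\mathrm{id}_X\Vert\le C$ with $C$ depending only on $T$, so one may freely pass to limits in $T$ and use the stability and interpolation properties of such $X$ (Lemmas \ref{L2 norm stability}, \ref{norm of T otimes id using BM}, \ref{interpolation fact}).

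First I would reduce to finite-rank $T$: finite-rank operators are dense in $S^r$ for $r<\infty$, and if $T_n\to T$ in $S^r$ with the estimate valid for each $T_n$, then $(T_n\otimes\mathrm{id}_X)_n$ is Cauchy in $B(L^2(\Omega;X))$, its limit must be $T\otimes\mathrm{id}_X$, and the bound passes to the limit. For finite-rank $T$, write the singular value decomposition $T=\sum_{k=1}^{m}s_k\langle\,\cdot\,,e_k\rangle f_k$ with $s_1\ge\cdots\ge s_m>0$ and $\{e_k\}$, $\{f_k\}$ orthonormal in $L^2(\Omega,\mu)$, so $\Vert T\Vert_{S^r}=\big(\sum_k s_k^r\big)^{1/r}$, and for $\xi\in L^2(\Omega;X)$ one has
$$(T\otimes\mathrm{id}_X)\xi=\sum_{k=1}^m s_k\,c_k\,f_k,\qquad c_k:=\int_\Omega \overline{e_k}\,\xi\,d\mu\in X .$$
Splitting $s_k=s_k^{1-\sigma}s_k^{\sigma}$ with $\sigma$ chosen below, one views $T\otimes\mathrm{id}_X$ as a composition in which a diagonal factor carrying the singular values is sandwiched between a ``reconstruction'' map built from the $f_k$ (handled by the type-$p_1$ inequality for $X$) and a ``coefficient'' map built from the $e_k$ (handled, dually, by the cotype-$p_2$ inequality). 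By Hölder's inequality in the index $k$, the diagonal factor contributes a norm bounded by $\big(\sum_k s_k^{\gamma}\big)^{1/\gamma}$ with $\tfrac1\gamma=\tfrac1{p_1}-\tfrac1{p_2}$; the hypothesis $\tfrac1{p_1}-\tfrac1{p_2}<\tfrac1r$ gives $\gamma>r$, hence $\big(\sum_k s_k^{\gamma}\big)^{1/\gamma}\le\big(\sum_k s_k^{r}\big)^{1/r}=\Vert T\Vert_{S^r}$. This is the only point at which $p_1$, $p_2$ and $r$ interact.

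The substantive work, and the step I expect to be the main obstacle, is the estimate of the two outer factors in terms of $T_{p_1}(X)$ and $C_{p_2}(X)$. Concretely one needs that for an arbitrary orthonormal system $\{g_k\}$ in $L^2(\Omega,\mu)$ the vector-valued combination $\sum_k g_k x_k$ is controlled in $L^2(\Omega;X)$ by the $\ell^{p_1}$-norm of $(x_k)$ via the type constant — i.e. $X$ has an ``$L^2$-type $p_1$'' valid not only for Gaussian sequences but for every orthonormal system — together with the dual (cotype) statement. A naive estimate that pulls the scalars $g_k(\omega)$ out of the norm by Cauchy–Schwarz is too lossy (it costs a factor $\sqrt m$), so one must genuinely compare the orthonormal system with a Gaussian sequence inside $L^2(\Omega;X)$, using that orthonormality limits how ``coherent'' the functions $g_k$ can be, and then invoke the Gaussian definitions of type and cotype; equivalently, one controls the $\gamma$-radonifying norm of the operator $\phi\mapsto\int_\Omega\phi\,\xi\,d\mu$ by $\Vert\xi\Vert_{L^2(\Omega;X)}$. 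Obtaining these comparisons with constants independent of $m$ and of the system, and tuning the split parameter $\sigma$ so that the Hölder exponents match $p_1$ and $p_2$ (which is exactly what the strict inequality allows), is the technical heart of the argument; multiplying the three resulting operator norms then gives $\Vert T\otimes\mathrm{id}_X\Vert_{B(L^2(\Omega;X))}\le M\,T_{p_1}(X)\,C_{p_2}(X)\,\Vert T\Vert_{S^r}$. For orientation, the two endpoint cases are easy — $\Vert T\otimes\mathrm{id}_X\Vert\le\Vert T\Vert_{S^2}$ for every Banach space $X$ (take absolute values in the integral kernel of a Hilbert–Schmidt $T$), and $\Vert T\otimes\mathrm{id}_H\Vert=\Vert T\Vert_{S^\infty}$ for a Hilbert space $H$ — and Lemma \ref{interpolation fact} interpolates these along a Hilbertian scale, but this by itself does not reach the stated bound for a general space of type $p_1$ and cotype $p_2$, which is why the Gaussian comparison above is needed.
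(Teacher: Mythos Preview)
The paper does not prove this proposition: it is quoted verbatim from de~la~Salle \cite{Salle} and used as a black box, so there is no ``paper's own proof'' to compare against. What the paper does add is the explicit value
\[
M=\sum_{i=1}^\infty 2^{\frac{r}{r-1}\bigl(\frac{1}{p_1}-\frac{1}{p_2}-\frac{1}{r}\bigr)i},
\]
and the shape of this constant (a geometric series indexed by dyadic levels) already tells you that the intended argument is a dyadic decomposition of the singular values --- one proves a rank-$n$ estimate of the type $\Vert T\otimes\mathrm{id}_X\Vert\le T_{p_1}(X)C_{p_2}(X)\,n^{1/p_1-1/p_2}\Vert T\Vert$ and then sums over levels --- rather than the single H\"older step you propose.

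Your proposal has a genuine gap at exactly the place you flag as the ``main obstacle''. You need, for an \emph{arbitrary} orthonormal system $(g_k)$ in $L^2(\Omega)$, the reconstruction bound $\Vert\sum_k g_k x_k\Vert_{L^2(\Omega;X)}\le T_{p_1}(X)\bigl(\sum_k\Vert x_k\Vert^{p_1}\bigr)^{1/p_1}$ and the dual coefficient bound $\bigl(\sum_k\Vert c_k\Vert^{p_2}\bigr)^{1/p_2}\le C_{p_2}(X)\Vert\xi\Vert_{L^2(\Omega;X)}$, uniformly in the system and in its length. These are \emph{false}: type and cotype control Gaussian (equivalently Rademacher) sequences, not arbitrary orthonormal systems, and there is no universal comparison. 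For a concrete failure of the coefficient bound, take $\Omega=\mathbb{Z}/n\mathbb{Z}$ with counting measure, the Fourier characters $e_k(j)=e^{2\pi i kj/n}$, $X=\ell^1_n$ (cotype $2$ with constant independent of $n$), and $\xi(j)$ the $j$-th standard basis vector: then each $c_k=\tfrac{1}{n}(e^{-2\pi ikj/n})_j$ has $\Vert c_k\Vert_{\ell^1}=1$, so $\bigl(\sum_k\Vert c_k\Vert^2\bigr)^{1/2}=\sqrt{n}$, while $\Vert\xi\Vert_{L^2(\Omega;\ell^1)}=1$. Thus no constant independent of $n$ can work, and your factorisation through $\ell^{p_1}(X)$ and $\ell^{p_2}(X)$ with a single diagonal H\"older step cannot be completed as written. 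The $\gamma$-radonifying reformulation you mention does not rescue this: the $\gamma$-norm of the coefficient map $\xi\mapsto(c_k)$ is \emph{not} bounded by $\Vert\xi\Vert_{L^2(\Omega;X)}$ uniformly in the orthonormal system, for the same reason. What \emph{is} true is a bound that degrades with the rank, and it is precisely this degradation, balanced against the decay of the singular values, that produces the geometric series for $M$; your single-shot H\"older argument tries to avoid the rank dependence altogether, which is why it breaks.
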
 

\begin{remark}
The constant $M$ in the above proposition can be computed explicitly. To be precise
$$M= \sum_{i=1}^\infty 2^{\frac{r}{r-1} (\frac{1}{p_1} - \frac{1}{p_2} - \frac{1}{r} )i}  .$$
 \end{remark}
 
Using the above proposition gives us a way to bound $\cos^{\mathcal{E}}_\max$ using the Schatten norm of $\lambda (k_i k_j - k_{i,j}), \lambda (k_j k_i - k_{i,j})$ in $B(L^2 (K_{i,j}, \mu))$ for certain classes of Banach spaces. We shall need the following notation: for $1 < p_1 \leq 2 \leq p_2 < \infty$, $T_{p_1} \geq 1, C_{p_2} \geq 1$ constants denote $\mathcal{T} (p_1, p_2, T_{p_1}, C_{p_2} )$ to be the class of Banach spaces of type $p_1$ and cotype $p_2$ such that for every $X \in \mathcal{T} (p_1, p_2, T_{p_1}, C_{p_2} )$ we have that 
$T_{p_1} (X) \leq T_{p_1}, C_{p_2} (X) \leq C_{p_2}$.
\begin{proposition}
\label{Schatten norms prop for cos}
Let $G$ as above. Let $1 < p_1 \leq 2 \leq p_2 < \infty$, $T_{p_1} \geq 1, C_{p_2} \geq 1$ constants and let $r \in [2, \infty)$ such that $\frac{1}{p_1} - \frac{1}{p_2} < \frac{1}{r}$. Assume that for every $1 \leq i < j \leq N$ we have that $\lambda (k_i k_j - k_{i,j}) \in B(L^2 (K_{i,j}, \mu))$ are of Schatten class $r$. Denote 
$$\cos_\max^{S^r} = \max \lbrace \Vert \lambda (k_i k_j - k_{i,j}) \Vert_{S^r} : 1 \leq i < j \leq N  \rbrace.$$
Then for $M=M(p_1,p_2,r)$ as in the proposition above we have that 
$$\cos_\max^{\mathcal{T} (p_1, p_2, T_{p_1}, C_{p_2} )} \leq M T_{p_1} C_{p_2} \cos_\max^{S^r}.$$
\end{proposition}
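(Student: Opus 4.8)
The plan is to unwind the definition of $\cos_\max^{\mathcal{T}(p_1,p_2,T_{p_1},C_{p_2})}$ into the operator norms of $\lambda_{i,j}(k_ik_j - k_{i,j})\otimes id_X$ and $\lambda_{i,j}(k_jk_i - k_{i,j})\otimes id_X$ on $B(L^2(K_{i,j};X))$, and then apply Proposition~\ref{Schatten norms prop from Salle} to each of these operators of the form $T\otimes id_X$ with $T = \lambda_{i,j}(k_ik_j - k_{i,j})$ (or $T = \lambda_{i,j}(k_jk_i - k_{i,j})$) acting on $L^2(K_{i,j},\mu)$. The point is simply that the two ``building block'' operators appearing in $\cos^X(\angle(k_i,k_j))$ are exactly of the form covered by de la Salle's proposition, and they are of Schatten class $r$ by hypothesis.

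First I would recall, as noted in the remark following the definition of $\cos^X_\max$, that for any Banach space $X$ and any $1 \le i < j \le N$,
$$\cos^X(\angle(k_i,k_j)) = \max\left\{ \Vert \lambda_{i,j}(k_ik_j - k_{i,j})\otimes id_X \Vert_{B(L^2(K_{i,j};X))}, \ \Vert \lambda_{i,j}(k_jk_i - k_{i,j})\otimes id_X \Vert_{B(L^2(K_{i,j};X))} \right\}.$$
Next, fix $X \in \mathcal{T}(p_1,p_2,T_{p_1},C_{p_2})$, so that $X$ has type $p_1$ and cotype $p_2$ with $T_{p_1}(X) \le T_{p_1}$ and $C_{p_2}(X) \le C_{p_2}$. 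Since $\frac{1}{p_1} - \frac{1}{p_2} < \frac{1}{r}$ and, by assumption, $\lambda_{i,j}(k_ik_j - k_{i,j})$ (and likewise $\lambda_{i,j}(k_jk_i - k_{i,j})$, since Schatten norms are invariant under taking adjoints and $(\lambda_{i,j}(k_ik_j - k_{i,j}))^*$ differs from $\lambda_{i,j}(k_jk_i - k_{i,j})$ only by the involution on $C_c(K_{i,j})$) is of Schatten class $r$ on $L^2(K_{i,j},\mu)$, Proposition~\ref{Schatten norms prop from Salle} gives
$$\Vert \lambda_{i,j}(k_ik_j - k_{i,j})\otimes id_X \Vert_{B(L^2(K_{i,j};X))} \le M\, T_{p_1}(X)\, C_{p_2}(X)\, \Vert \lambda_{i,j}(k_ik_j - k_{i,j}) \Vert_{S^r} \le M\, T_{p_1}\, C_{p_2}\, \cos_\max^{S^r},$$
and the same bound for the $k_jk_i$ term. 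Taking the maximum over the two terms gives $\cos^X(\angle(k_i,k_j)) \le M\,T_{p_1}\,C_{p_2}\,\cos_\max^{S^r}$, and then taking the maximum over $1 \le i < j \le N$ gives $\cos^X_\max \le M\,T_{p_1}\,C_{p_2}\,\cos_\max^{S^r}$. Finally, taking the supremum over all $X \in \mathcal{T}(p_1,p_2,T_{p_1},C_{p_2})$ yields the claimed inequality.

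The only point requiring a little care — and the closest thing to an obstacle — is the bookkeeping that the Schatten norm of $\lambda_{i,j}(k_jk_i - k_{i,j})$ is controlled by $\cos_\max^{S^r}$ as well, since the definition of $\cos_\max^{S^r}$ literally only mentions $\lambda(k_ik_j - k_{i,j})$; this is handled by observing that $k_{i,j}^* = k_{i,j}$ and $(k_ik_j)^* = k_jk_i$ in $C_c(K_{i,j})$ under the involution $f^*(g) = \overline{f(g^{-1})}$, together with the fact that $k_i, k_j, k_{i,j}$ are real-valued, so that $\lambda_{i,j}(k_jk_i - k_{i,j}) = \big(\lambda_{i,j}(k_ik_j - k_{i,j})\big)^*$ and hence has the same singular values and the same $S^r$ norm. (Alternatively, if one prefers, $\cos_\max^{S^r}$ can be read as already taking the maximum over both orderings; in either case the estimate goes through unchanged.) Everything else is a direct substitution into the already-established results.
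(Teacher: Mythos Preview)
Your proposal is correct and follows exactly the same approach as the paper: the paper's proof consists of the observation that $\lambda(k_ik_j - k_{i,j})$ and $\lambda(k_jk_i - k_{i,j})$ are adjoints (hence have equal $S^r$ norm), followed by an appeal to Proposition~\ref{Schatten norms prop from Salle} and the definition of $\cos^X_\max$. You have simply written out these steps in more detail, including the same adjoint observation you flag as the only delicate point.
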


\begin{proof}
Note that for every $1 \leq i < j \leq N$, $\lambda (k_i k_j - k_{i,j}) $ is the adjoint operator of $\lambda (k_j k_i - k_{i,j)}$, therefore $ \Vert \lambda (k_i k_j - k_{i,j}) \Vert_{S^r} =  \Vert \lambda (k_j k_i - k_{i,j}) \Vert_{S^r}$. Combine proposition \ref{Schatten norms prop from Salle} with the definition of $\cos^X_\max$.
\end{proof}

Combining the above proposition with theorem \ref{mSBT criterion via regular rep} gives the following result: 
\begin{theorem}
\label{mSBT via Schatten norms}
Let $G$ as above and let $1 < p_1 \leq 2 \leq p_2 < \infty$ and let $r \in [2, \infty)$ such that $\frac{1}{p_1} - \frac{1}{p_2} < \frac{1}{r}$. Denote 
$$M= \sum_{i=1}^\infty 2^{\frac{r}{r-1} (\frac{1}{p_1} - \frac{1}{p_2} - \frac{1}{r} )i}  .$$
Assume that there is a constant $c < \frac{1}{8N-11}$ such that $M \cos_\max^{S^r} \leq c$, and let $c'$ be a constant such that $c \leq c' < \frac{1}{8N-11}$. Denote
$$\mathcal{E} = \overline{\bigcup_{T_{p_1}, C_{p_2} \in [1, \infty), cT_{p_1} C_{p_2} \leq c'} Int \left(\mathcal{T} (p_1, p_2, T_{p_1}, C_{p_2} ), \geq \frac{\ln (2) - \ln (c')}{\ln (2) - \ln (cT_{p_1} C_{p_2})} \right)}.$$
Then there is  $s_0 >0$ such that the sequence $(\frac{k_1 + ... +k_N}{N} )^n$ converges to $p$  in $C_{\mathcal{F} (\mathcal{E},K,s_0)}$  as $n \rightarrow \infty$ and $\forall \pi \in \mathcal{F} (\mathcal{E},K, s_0)$, $\pi (p)$ is a projection on $X^\pi$. 
\end{theorem}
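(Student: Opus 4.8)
The plan is to derive this statement from Proposition~\ref{Schatten norms prop for cos}, Proposition~\ref{interpolation of mathcal(E) prop}, Proposition~\ref{closure of mathcal(E) prop} and Theorem~\ref{mSBT criterion via regular rep}, in exactly the same manner in which Corollary~\ref{enlarging mathcal(E) corollary} was obtained. The role of $M$ and of the hypothesis $M\cos_\max^{S^r}\le c$ is to convert the Schatten-norm bound into a bound on $\cos_\max$ over concrete type/cotype classes, and the role of the interpolation exponent appearing in the definition of $\mathcal{E}$ is to bring that bound down to $c'$ after interpolating each such class with an arbitrary compatible partner $X_0$.

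First I would fix finite constants $T_{p_1},C_{p_2}\ge 1$ with $c_0:=cT_{p_1}C_{p_2}\le c'$ (these exist since $c\le c'$, e.g.\ $T_{p_1}=C_{p_2}=1$). The hypothesis $M\cos_\max^{S^r}\le c<\infty$ in particular forces each $\lambda(k_ik_j-k_{i,j})$ to be of Schatten class $r$, so Proposition~\ref{Schatten norms prop for cos} applies and gives $\cos_\max^{\mathcal{T}(p_1,p_2,T_{p_1},C_{p_2})}\le MT_{p_1}C_{p_2}\cos_\max^{S^r}\le c_0$, where $c_0\le c'<\frac{1}{8N-11}<2$. Next I apply Proposition~\ref{interpolation of mathcal(E) prop} to this class with $\theta_0=\frac{\ln 2-\ln c'}{\ln 2-\ln c_0}\in(0,1]$; taking logarithms one checks that $2(c_0/2)^{\theta_0}=c'$, hence $\cos_\max^{Int(\mathcal{T}(p_1,p_2,T_{p_1},C_{p_2}),\ge\theta_0)}\le c'$. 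Since $\cos_\max$ of a union of classes of Banach spaces is the supremum of the individual values, the union inside the closure defining $\mathcal{E}$ has $\cos_\max\le c'$, and then Proposition~\ref{closure of mathcal(E) prop} yields $\cos_\max^{\mathcal{E}}\le c'$. Finally, writing $c'=\frac{1-\varepsilon}{8N-11}$ with $\varepsilon=1-(8N-11)c'>0$, Theorem~\ref{mSBT criterion via regular rep} supplies the required $s_0>0$ together with the convergence of $\left(\frac{k_1+\cdots+k_N}{N}\right)^n$ in $C_{\mathcal{F}(\mathcal{E},K,s_0)}$ to a $p$ with $\pi(p)$ a projection on $X^\pi$ for every $\pi\in\mathcal{F}(\mathcal{E},K,s_0)$.

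I do not expect any genuine difficulty: the argument is a chain of citations plus bookkeeping of constants, and introduces no new idea beyond what is already in Corollary~\ref{enlarging mathcal(E) corollary}. The one point that needs care is that the interpolation exponent $\theta_0$ varies with $(T_{p_1},C_{p_2})$ through $c_0$, so one must verify, uniformly over every pair entering the union, both the identity $2(c_0/2)^{\theta_0}=c'$ and the admissibility condition $c_0<2$ of Proposition~\ref{interpolation of mathcal(E) prop}, as well as $\theta_0\in(0,1]$ so that $Int(\mathcal{T}(p_1,p_2,T_{p_1},C_{p_2}),\ge\theta_0)$ is actually defined; all of these follow immediately from $c_0\le c'<\frac{1}{8N-11}$. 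That uniform verification is the mild obstacle; everything else is immediate.
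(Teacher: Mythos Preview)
Your proposal is correct and follows precisely the paper's own approach: combine Proposition~\ref{Schatten norms prop for cos}, Proposition~\ref{interpolation of mathcal(E) prop}, Proposition~\ref{closure of mathcal(E) prop}, and Theorem~\ref{mSBT criterion via regular rep}. Your explicit bookkeeping of the constants (verifying $2(c_0/2)^{\theta_0}=c'$, $c_0<2$, and $\theta_0\in(0,1]$ uniformly over the union) is in fact more detailed than the paper's terse citation chain.
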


\begin{proof}
Combine the above proposition \ref{Schatten norms prop for cos}, theorem \ref{mSBT criterion via regular rep} and propositions \ref{closure of mathcal(E) prop}, \ref{interpolation of mathcal(E) prop}.
\end{proof}

Combining the above theorem with proposition \ref{fixed point proposition} yields the following corollary:
\begin{corollary}
\label{fixed point property via Schatten norms}
Assume the conditions of the above theorem hold and let $\mathcal{E}$ be as in the above theorem, then for any $X \in \mathcal{E}$, $G$ has property $F_X$, i.e., for every continuous affine isometric action of $G$ on $X$ has a fixed point.
\end{corollary}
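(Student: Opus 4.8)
The plan is to derive property $F_X$ for each $X \in \mathcal{E}$ directly from Proposition \ref{fixed point proposition}: by that proposition it suffices to show that $G$ has robust Banach property (T) with respect to the single space $\mathbb{C} \oplus X$ equipped with the $l_2$ norm, and this will follow once we know that $\mathbb{C} \oplus X$ (with the $l_2$ norm) lies in the class $\mathcal{E}$ produced by Theorem \ref{mSBT via Schatten norms}.

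First I would apply Theorem \ref{mSBT via Schatten norms}, whose hypotheses are exactly the standing assumptions, to obtain $s_0 > 0$ for which the symmetric real functions $\left(\frac{k_1 + \dots + k_N}{N}\right)^n$, each of integral $1$, converge in $C_{\mathcal{F}(\mathcal{E}, K, s_0)}$ to an element $p$ such that $\pi(p)$ is a projection onto $X^\pi$ for every $\pi \in \mathcal{F}(\mathcal{E}, K, s_0)$; that is, $G$ has robust Banach property (T) with respect to $\mathcal{E}$. Next I would observe that $\mathbb{C} \in \mathcal{E}$: as a one-dimensional Hilbert space $\mathbb{C}$ has type $p_1$ and cotype $p_2$ with the optimal constants $1$, so $\mathbb{C} \in \mathcal{T}(p_1, p_2, 1, 1)$; since the choice $T_{p_1} = C_{p_2} = 1$ satisfies $c\, T_{p_1} C_{p_2} = c \le c'$, the class $Int(\mathcal{T}(p_1, p_2, 1, 1), \geq \theta)$ appearing in the union that defines $\mathcal{E}$ contains $\mathcal{T}(p_1, p_2, 1, 1)$ (each $Y$ there equals $[Y, Y]_1$), hence contains $\mathbb{C}$, hence $\mathbb{C} \in \mathcal{E}$. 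Because $\mathcal{E}$ is, by construction, closed under $l_2$-sums, it follows that $\mathbb{C} \oplus X \in \mathcal{E}$ (with the $l_2$ norm) for every $X \in \mathcal{E}$. Finally, since $\{\mathbb{C} \oplus X\} \subseteq \mathcal{E}$ we have $\mathcal{F}(\{\mathbb{C} \oplus X\}, K, s_0) \subseteq \mathcal{F}(\mathcal{E}, K, s_0)$ and $\Vert \cdot \Vert_{\mathcal{F}(\{\mathbb{C} \oplus X\}, K, s_0)} \le \Vert \cdot \Vert_{\mathcal{F}(\mathcal{E}, K, s_0)}$ on $C_c(G)$, so both the convergence of $\left(\frac{k_1 + \dots + k_N}{N}\right)^n$ to $p$ and the fact that $\pi(p)$ is a projection onto $X^\pi$ persist over the smaller family $\mathcal{F}(\{\mathbb{C} \oplus X\}, K, s_0)$. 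Thus $G$ has robust Banach property (T) with respect to $\mathbb{C} \oplus X$ with the $l_2$ norm, and Proposition \ref{fixed point proposition} gives property $F_X$.

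I do not anticipate a genuine obstacle here --- all the real work already resides in Theorem \ref{mSBT via Schatten norms} and Proposition \ref{fixed point proposition}. The two points that deserve a moment's attention are the membership $\mathbb{C} \in \mathcal{E}$ (immediate from $\mathbb{C}$ having the best possible type and cotype constants, together with the $l_2$-sum stability built into the definition of $\mathcal{E}$) and the passage from robust Banach property (T) for the whole class $\mathcal{E}$ to the same property for the single space $\mathbb{C} \oplus X$, which is just monotonicity of the norms $\Vert \cdot \Vert_{\mathcal{F}(\cdot, K, s_0)}$ under shrinking the family of representations. This is the exact analogue of the proof of Corollary \ref{fixed point corollary from angles in unitary rep}, with $\mathcal{T}(p_1, p_2, 1, 1)$ and its interpolation class playing the role that the class of Hilbert spaces plays there; as in that proof, one also uses that $G$, being generated by the compact subgroups $K_1, \dots, K_N$, is compactly generated, so Definitions \ref{robust property T definition} and \ref{robust property T definition - compact generation} agree and Proposition \ref{fixed point proposition} applies.
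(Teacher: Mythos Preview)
Your proposal is correct and follows exactly the approach the paper takes (implicitly here, and explicitly in the proof of the analogous Corollary~\ref{fixed point corollary from angles in unitary rep}): show $\mathbb{C} \in \mathcal{E}$, use closure of $\mathcal{E}$ under $l_2$-sums to get $\mathbb{C} \oplus X \in \mathcal{E}$, and invoke Proposition~\ref{fixed point proposition}. Your added care in checking $\mathbb{C} \in \mathcal{T}(p_1,p_2,1,1)$ and in spelling out the monotonicity under shrinking the family of representations is fine but not strictly necessary.
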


\begin{remark}
One should note that due to Peter-Weyl theorem finding the eigenvalues of $\lambda ((k_i k_j -k_{i,j})^* (k_i k_j -k_{i,j}))$ in $L^2 (K_{i,j},\mu)$ in order to calculate that Schatten norm boils down to finding those eigenvalues in every irreducible representation of $K_{i,j}$.
\end{remark}

\subsection{Angle between groups and Schatten norm in Hilbert spaces using combinatorial data}
In the two methods described above we deduced robust Banach property (T) using knowledge of the spectrum of $\lambda ((k_i k_j - k_{i,j})^* (k_i k_j - k_{i,j}))$ for each $1 \leq i < j \leq N$. One way to obtain such knowledge is analysing the unitary representations of $K_{i,j}$. Below we present a more combinatorial way for analysing the spectrum of $\lambda ((k_i k_j - k_{i,j})^* (k_i k_j - k_{i,j}))$ by analysing the spectrum of the Laplacian of a graph constructed using $K_{i,j}, K_i, K_j$.

\begin{definition}
\label{graph of K_1, K_2 definition}
Let $K_{1,2}$ be a compact group and let $K_1, K_2$ be finite index subgroups of $K_{1,2}$ such that $K_{1,2} = \langle K_1, K_2 \rangle$. Define a bipartite graph $\mathcal{G}=(V,E)$ as follows:
\begin{itemize}
\item For $i=1,2$, $V_i$ is the set for right cosets:
$$V_i = \lbrace  K_i g : g \in K_{1,2} \rbrace,$$
and $V = V_1 \cup V_2$.
\item $K_1 g$ and $K_2 g'$ are connected by an edge if $K_1 g \cap K_2 g' \neq \emptyset$. In other words, if $l_i = [K_i : K_{1} \cap K_2]$ and $h_1^i,...,h_{l_i}^i$ are representatives in $K_i$ such that $K_i = \bigcup_{j} (K_1 \cap K_2) h^i_{j}$. Then $K_1 g$ is connected to $K_2 h^2_1 g,...,K_2 h^2_{l_2} g$ and $K_2 g$ is connected to $K_1 h^1_1 g,...,K_1 h^1_{l_1} g$. 
\end{itemize} 
\end{definition}
Notice that $\mathcal{G}$ above is semi-regular, since for every $i=1,2$ and every $v \in V_i$, $d(v) = l_i$. 

The next lemma connects the eigenvalues of the Laplacian on $\mathcal{G}$ to the eigenvalues of $\lambda ((k_1 k_2 - k_{1,2})^* (k_1 k_2 - k_{1,2}))$. A weaker form of this connection already appeared in \cite{DymaraJ}, where the spectral gap of the Laplacian on $\mathcal{G}$ was used to bound the norm of $\lambda ((k_1 k_2 - k_{1,2}))$.  

\begin{lemma}
\label{Laplacian eigenvalues lemma}
Let $K_{1,2}$, $K_1, K_2$ be as above and let $\lambda$ be the left regular representation on $L^2 (K_{1,2},\mu)$, where $\mu$ is the Haar measure of $K_{1,2}$. Let $\Delta$ be the graph Laplacian of $\mathcal{G}$ defined above. Let $0 = \eta_1 < \eta_2 \leq \eta_3 \leq ...\leq` \eta_{\vert V \vert -1} < \eta_{\vert V \vert} =2$ be the eigenvalues (including multiplicities) of $\Delta$. 
Then for $k_1,k_2, k_{1,2} \in C_c (K_{1,2})$ defined as in the previous section we have that the non-trivial eigenvalues of $\lambda ((k_1 k_2 - k_{1,2})^* (k_1 k_2 - k_{1,2}))$ are the non zero values of
$$(1- \eta_2)^2,...,(1-\eta_{\min \lbrace \vert V_1 \vert , \vert V_2 \vert \rbrace})^2,$$
accounting for multiplicities.
\end{lemma}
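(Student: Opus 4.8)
The plan is to rewrite the operator in terms of three orthogonal projections on $L^2(K_{1,2},\mu)$ and then build a dictionary between the resulting operator and the (normalized) adjacency operator of $\mathcal{G}$. Write $P_1 = \lambda(k_1)$, $P_2 = \lambda(k_2)$, $Q = \lambda(k_{1,2})$. Since $k_1,k_2,k_{1,2}$ are self-adjoint idempotents in $C_c(K_{1,2})$ and $\lambda$ is a unitary $*$-representation, these are orthogonal projections; their images are $\operatorname{Im}(P_i) = \{\psi : \lambda(g)\psi = \psi \ \forall g \in K_i\}$, which is exactly the space of functions constant on the right cosets $K_i g$, i.e.\ $L^2(V_i)$, and $\operatorname{Im}(Q) = \mathbb{C}\cdot\mathbf 1$. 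As in Proposition \ref{pi (k_i) are projections}, $Q P_1 = Q P_2 = Q$ (and their adjoints). A direct computation then gives $\lambda(k_1 k_2 - k_{1,2}) = P_1 P_2 - Q$ and
$$\lambda\big((k_1 k_2 - k_{1,2})^*(k_1 k_2 - k_{1,2})\big) = (P_2 P_1 - Q)(P_1 P_2 - Q) = P_2 P_1 P_2 - Q.$$
Because $Q \le P_2$ and $P_2 P_1 P_2$ commutes with $Q$, the operator $P_2 P_1 P_2 - Q$ vanishes on $\operatorname{Im}(P_2)^\perp$ and on $\mathbb{C}\cdot\mathbf 1$, and equals $P_2 P_1 P_2$ on $\operatorname{Im}(P_2)\ominus\mathbb{C}\cdot\mathbf 1$; moreover $\operatorname{Im}(P_1)\cap\operatorname{Im}(P_2) = \mathbb{C}\cdot\mathbf 1$ since $\langle K_1,K_2\rangle = K_{1,2}$. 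So it suffices to compute the eigenvalues of $P_2 P_1 P_2$ on $\operatorname{Im}(P_2)\ominus\mathbb{C}\cdot\mathbf 1$. We may assume $|V_2|\le|V_1|$: otherwise replace $T^*T$ by $T T^* = P_1 P_2 P_1 - Q$ (same nonzero spectrum), swapping the roles of $V_1,V_2$.

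Next I would set up the dictionary. Identify $\operatorname{Im}(P_i)$ with $L^2(V_i)$, equipped with the degree-weighted inner product, via the operator $\Phi_i$ sending a function on the coset space to the $K_{1,2}$-function that is constant on the $K_i$-cosets; $\Phi_i$ is a scalar multiple of an isometry (eigenvalues are preserved under conjugation by $\Phi_i$ in any case) and sends constants to constants. The key point is that averaging a $K_1$-coset-constant function over a $K_2$-coset reproduces the neighbour average in $\mathcal{G}$: for fixed $x$, the fibres of the map $h \mapsto K_1 h^{-1}x$, $h \in K_2$, are left $K_2$-translates of $K_1 \cap K_2$ (using left-invariance of $\mu$), hence of equal measure, and their images run over precisely the $K_1$-cosets adjacent to $K_2 x$. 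This gives $P_2 \Phi_1 = \Phi_2 M_{12}$ and $P_1 \Phi_2 = \Phi_1 M_{21}$, where $M = I - \Delta$ is the normalized adjacency operator with off-diagonal blocks $M_{12}: L^2(V_1) \to L^2(V_2)$ and $M_{21} = M_{12}^*$. Hence $P_2 P_1 P_2 \Phi_2 = \Phi_2\, M_{12} M_{21}$, so the eigenvalues of $P_2 P_1 P_2$ on $\operatorname{Im}(P_2)$ are exactly those of $M_{12} M_{21}$ on $L^2(V_2)$, and passing to $\ominus\,\mathbb{C}\cdot\mathbf 1$ deletes the eigenvalue $1$ coming from the constant.

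Finally I would relate $\operatorname{Spec}\big(M_{12} M_{21}\big|_{L^2(V_2)}\big)$ to $\operatorname{Spec}(\Delta)$. Since $M$ is self-adjoint with off-diagonal block form, each nonzero eigenvalue $\mu$ of $M_{12} M_{21}$ contributes $\pm\sqrt{\mu}$ to $\operatorname{Spec}(M)$ with equal multiplicity, and $\dim\ker M = |V_1| + |V_2| - 2\operatorname{rank}(M_{12})$. Combining this with the symmetry of $\operatorname{Spec}(\Delta)$ about $1$ (Proposition \ref{bipartite graph spectrum proposition 1}) and the fact that $\eta = 1$ has multiplicity at least $|V_1| - |V_2|$ (Proposition \ref{bipartite graph spectrum proposition 2}), and sorting $0 = \eta_1 < \eta_2 \le \dots \le \eta_{|V|} = 2$, one gets that the $|V_2|$ eigenvalues of $M_{12} M_{21}$ on $L^2(V_2)$ are precisely $(1-\eta_1)^2, (1-\eta_2)^2, \dots, (1-\eta_{|V_2|})^2$. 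Removing $(1-\eta_1)^2 = 1$ (a simple eigenvalue, by connectedness of $\mathcal{G}$) and then discarding the zeros yields the stated description of the nonzero eigenvalues of $\lambda\big((k_1 k_2 - k_{1,2})^*(k_1 k_2 - k_{1,2})\big)$. I expect the main obstacle to be exactly this last bookkeeping step — correctly matching multiplicities, i.e.\ accounting for the $|V_1| - |V_2|$ extra zero eigenvalues of $M$ against the $\eta = 1$ eigenspace and checking that the indices $\operatorname{rank}(M_{12})+1, \dots, |V_2|$ all land on $\eta = 1$; the algebraic reduction in the first step is routine, and the dictionary in the second step is a careful but standard Haar-measure computation.
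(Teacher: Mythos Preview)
Your proposal is correct and follows essentially the same route as the paper: both arguments identify $\operatorname{Im}\lambda(k_i)$ with $L^2(V_i)$, show via a Haar-measure computation that $\lambda(k_j)$ restricted to these subspaces is the neighbour-averaging block $M_{ij}$ of $I-\Delta$, and then read off the spectrum of $P_2P_1P_2-Q$ from the bipartite spectral data of $\mathcal{G}$. The only organizational difference is in the last step: the paper builds explicit eigenvectors of $M_2M_1$ on $L^2(V_2)$ by restricting eigenfunctions of $\Delta$ (using Propositions~\ref{bipartite graph spectrum proposition 1}--\ref{bipartite graph spectrum proposition 3} to see that $\chi_{V_2}\psi_1,\dots,\chi_{V_2}\psi_{|V_2|}$ form a basis), whereas you invoke the general block-off-diagonal linear algebra for self-adjoint $M$ together with the same propositions to do the multiplicity bookkeeping; these are two packagings of the same computation.
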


\begin{proof}
Assume without loss of generality that $[K_{1,2}:K_2] \leq [K_{1,2}:K_1]$, i.e., assume that $\vert V_2 \vert \leq \vert V_1 \vert$ (if $[K_{1,2}:K_2] > [K_{1,2}:K_1]$ we repeat the argument below for $k_2 k_1 - k_{1,2}$ instead of $k_1 k_2 - k_{1,2}$).

We start by exploring operators on $L^2(V)$ that we will later connect to $\lambda (k_1)$ and $\lambda (k_2)$. Abusing notation, we define for $i=1,2$ the operator $\chi_{V_i} \in L^2 (V)$ as multiplying by the indicator function $\chi_{V_i}$, i.e., $\chi_{V_i} \psi (v) = \chi_{V_i} (v)\psi(v)$. Define $M_1, M_2$ acting on $L^2 (V)$ as $M_1 = \chi_{V_{1}} (I- \Delta) \chi_{V_2}$ and $M_2 = \chi_{V_{2}} (I- \Delta) \chi_{V_1}$. In other words:
$$M_1 \psi (v) = \begin{cases}
0 & v \in V_2 \\
\frac{1}{d(v)} \sum_{u \in V_2, \lbrace v,u \rbrace \in E} \psi(u) & v \in V_1
\end{cases},$$
$$M_2 \psi (v) = \begin{cases}
0 & v \in V_1 \\
\frac{1}{d(v)} \sum_{u \in V_1, \lbrace v,u \rbrace \in E} \psi(u) & v \in V_2
\end{cases}.$$
By the definition of $M_1$ and $M_2$, we get that $M_2 M_1 = \chi_{V_2} (I-\Delta) \chi_{V_1} (I-\Delta)  \chi_{V_2}$. For every function $\psi \in L^2 (V)$, if $\psi$ is supported on $V_1$ then $M_2 M_1 f =0$. Therefore $M_2 M_1$ has at most $\vert V_2 \vert$ non zero eigenvalues (accounting for multiplicities). 
For an eigenfunction $\psi$ of $\Delta$ with $\Delta \psi = \eta \psi$ we have that $M_2 M_1 (\chi_{V_2} \psi) = (1-\eta)^2 (\chi_{V_2} \psi)$. Let $\psi_1 = \chi_V, \psi_2,...,\psi_{\vert V_2 \vert}$ be the eigenfunctions of $0=\eta_1,...,\eta_{\vert V_2 \vert}$. By propositions \ref{bipartite graph spectrum proposition 1}, \ref{bipartite graph spectrum proposition 2}, the space of functions supported on $V_2$ is spanned by $\chi_{V_2} \psi_1 = \chi_{V_2},\chi_{V_2} \psi_2,...,\chi_{V_2} \psi_{\vert V_2 \vert}$. 

Next, we connect $M_1$ and $M_2$ to the operators $\lambda (k_1)$ and $\lambda (k_2)$ acting on $L^2 (K_{1,2}, \mu)$. For $i=1,2$ we denote
$$L^2 (V)_i = \lbrace f \in L^2 (V) : supp (\psi) \subseteq V_i \rbrace,$$
$$L^2 (K_{1,2},\mu)_i = \lbrace f \in L^2 (K_{1,2}, \mu) : \forall g \in K_{1,2}, \forall g' , g'' \in K_i g, f(g')=f(g'') \rbrace.$$
By the definition of the vertex sets $V_1$ and $V_2$, there are natural identifications $F_i:  L^2 (K_{1,2},\mu)_i \rightarrow  L^2 (V)_i$. We will show that
$$\forall f \in L^2 (K_{1,2},\mu)_1, \lambda (k_2) f = F_2^{-1} M_2 F_1 f.$$

First, we note that for every $f \in L^2 (K_{1,2},\mu)$, we have that $\lambda (k_2) \in L^2 (K_{1,2},\mu)_2$. Fix $g,g'$ such that $K_2 g = K_2 g'$. There is $h' \in K_2$ such that $g=h' g'$, therefore
\begin{align*}
(\lambda (k_2) f) (g) = \int_{h \in K_{2}} \frac{1}{\mu (K_2)} \lambda (h) f(g) d \mu (h) = \\ \int_{h \in K_{2}} \frac{1}{\mu (K_2)} f(h^{-1} g) d \mu (h) = \int_{h \in K_{2}} \frac{1}{\mu (K_2)} f(h^{-1} h'g') d \mu (h) = \\ \int_{h \in K_{2}} \frac{1}{\mu (K_2)} f(h^{-1} g') d \mu (h) =(\lambda (k_2) f) (g').  
\end{align*}
This yields that for every $f \in L^2 (K_{1,2},\mu)$, $\lambda (k_2) f$ is fixed on right cosets of $K_2$ as we claimed. 

Next, denote $l_2 = [K_2 : K_{1} \cap K_{2}]$ and fix $h_1^2,...,h^2_{l_2}$ such that $K_2 = \bigcup_j (K_{1} \cap K_{2}) h_j^2$.  Let $f \in L^2 (K_{1,2},\mu)_1$, then for every $g \in K_{1,2}$ the following holds:
\begin{align*}
(\lambda (k_2) f) (g) = \int_{h \in K_{2}} \frac{1}{\mu (K_2)} \lambda (h) f(g) d \mu (h) = \int_{h \in K_{2}} \frac{1}{\mu (K_2)} f(h^{-1} g) d \mu (h) = \\ \sum_{j=1}^{l_2} \int_{h \in K_1 \cap K_{2}} \frac{1}{\mu (K_2)} f(h^{-1} h^2_j g) d \mu (h).     
\end{align*}
Note that $f \in L^2 (K_{1,2},\mu)_1$ and therefore for every $h \in K_{1,2}$ and for every $j$, $f(h^{-1} h^2_j g) = f(h^2_j g)$. Therefore
\begin{align*}
\sum_{j=1}^{l_2} \int_{h \in K_1 \cap K_{2}} \frac{1}{\mu (K_2)} f(h^{-1} h^2_j g) d \mu (h) = \sum_{j=1}^{l_2} \frac{\mu (K_1 \cap K_2)}{\mu (K_2)} f(h^2_j g)=  \\ \sum_{j=1}^{l_2} \frac{1}{l_2} f(h^2_j g) = (F_2^{-1} M_2 F_1 f) (g),
\end{align*}
as needed. Similarly
$$\forall f \in L^2 (K_{1,2},\mu)_2, \lambda (k_1) f = F_1^{-1} M_1 F_2 f.$$
Therefore,  
$$\forall f \in L^2 (K_{1,2},\mu)_2, \lambda (k_2) \lambda (k_1) f = F_2^{-1} M_2 M_1 F_2 f.$$
To finish, we notice that
$$\lambda ((k_1 k_2 - k_{1,2})^* (k_1 k_2 - k_{1,2})) = \lambda (k_2 k_1) \lambda (k_2 -k_{1,2}).$$ 
This implies that the non-trivial eigenvalues of $\lambda ((k_1 k_2 - k_{1,2})^* (k_1 k_2 - k_{1,2}))$ are in $(Ker (\lambda (k_2 - k_{1,2})))^\perp = Im (\lambda (k_2 - k_{1,2})) = Im (\lambda (k_2)) \cap Im (I-\lambda (k_{1,2}))$. By the definition of $k_{1,2}$ and $k_2$ we have that $Im (\lambda (k_2)) \cap Im (I-\lambda (k_{1,2}))$ is 
$$Im (\lambda (k_2)) \cap Im (I-\lambda (k_{1,2})) = \lbrace f \in L^2 (K_{1,2},\mu)_2 : \int_{K_{1,2}} f =0 \rbrace.$$
Therefore $Im (\lambda (k_2)) \cap Im (I-\lambda (k_{1,2}))$ is identified by $F_2$ with the space
$$\lbrace \psi \in L^2 (V)_2 : \sum_{v \in V_2} \psi(v) =0 \rbrace = span \lbrace \chi_{V_2} \psi_2,...,\chi_{V_2} \psi_{\vert V_2 \vert} \rbrace,$$
where the last equality is due to proposition \ref{bipartite graph spectrum proposition 3}. Therefore the non-trivial spectrum of $\lambda ((k_1 k_2 - k_{1,2})^* (k_1 k_2 - k_{1,2}))$ is the same as the non trivial spectrum of $M_2 M_1$ on $span \lbrace \chi_{V_2} \psi_2,...,\chi_{V_2} \psi_{\vert V_2 \vert} \rbrace$ as needed.
\end{proof}

\begin{corollary}
\label{Laplacian eigenvalues corollary}
Let $K_{1,2}$, $K_1, K_2$ be as above and let $\lambda$ be the left regular representation on $L^2 (K_{1,2},\mu)$, where $\mu$ is the Haar measure of $K_{1,2}$. Let $\Delta$ be the graph Laplacian of $\mathcal{G}$ defined above. Let $0 = \eta_1 < \eta_2 \leq \eta_3 \leq ...\leq` \eta_{\vert V \vert -1} < \eta_{\vert V \vert} =2$ be the eigenvalues (including multiplicities) of $\Delta$. 
Then for $k_1,k_2, k_{1,2} \in C_c (K_{1,2})$ defined as in the previous section we have that 
$$\Vert \lambda (k_1 k_2 - k_{1,2}) \Vert \leq 1- \eta_2,$$
and for every $1 \leq r < \infty$
\begin{align*}
\Vert \lambda (k_1 k_2 - k_{1,2}) \Vert_{S^r} \leq \left( ( 1-\eta_2 )^r +  (1-\eta_3 )^r +...+ ( 1-\eta_{\min \lbrace \vert V_1 \vert, \vert V_2 \vert \rbrace} )^r \right)^{\frac{1}{r}} \leq \\ (1-\eta_2)\min \lbrace \vert V_1 \vert^{\frac{1}{r}},\vert V_2 \vert^{\frac{1}{r}} \rbrace  .
\end{align*}
\end{corollary}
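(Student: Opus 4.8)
The plan is to read both estimates off Lemma~\ref{Laplacian eigenvalues lemma} together with the elementary relations between an operator, its singular values, and its Schatten norms. Recall that for a bounded operator $S$ on a Hilbert space the singular values $s_1(S) \geq s_2(S) \geq \dots$ are the square roots of the eigenvalues of $S^* S$, that $\Vert S \Vert = s_1(S)$, and that $\Vert S \Vert_{S^r}^r = \sum_i s_i(S)^r$. Applying Lemma~\ref{Laplacian eigenvalues lemma} with $S = \lambda(k_1 k_2 - k_{1,2})$, the nonzero eigenvalues of $S^* S$ are the nonzero values among $(1 - \eta_2)^2, \dots, (1 - \eta_{\min\{\vert V_1\vert, \vert V_2\vert\}})^2$, hence the nonzero singular values of $S$ are the nonzero values among $\vert 1 - \eta_2\vert, \dots, \vert 1 - \eta_{\min\{\vert V_1\vert, \vert V_2\vert\}}\vert$. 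As in the proof of the lemma we may assume $\vert V_2 \vert \leq \vert V_1\vert$, so that $\min\{\vert V_1\vert, \vert V_2\vert\} = \vert V_2\vert$; the case $\vert V_2 \vert > \vert V_1 \vert$ is identical after replacing $k_1 k_2 - k_{1,2}$ by its adjoint $k_2 k_1 - k_{1,2}$, which has the same operator norm and the same Schatten norms.

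The one genuine point is to check that the absolute values can be dropped, i.e. that $1 - \eta_i \geq 0$ for $2 \leq i \leq \vert V_2\vert$. This is a spectral-counting fact about semi-regular bipartite graphs: by Proposition~\ref{bipartite graph spectrum proposition 1} the multiset of eigenvalues of $\Delta$ is invariant under $\eta \mapsto 2 - \eta$, so it has equally many eigenvalues $<1$ and $>1$; by Proposition~\ref{bipartite graph spectrum proposition 2} the eigenvalue $1$ has multiplicity at least $\vert V_1\vert - \vert V_2\vert$. Combining these, the number of eigenvalues $<1$ is at most $\vert V_2\vert$, so the number of eigenvalues $\leq 1$ is at least $\vert V_1\vert \geq \vert V_2\vert$; in particular $\eta_1 \leq \dots \leq \eta_{\vert V_2\vert} \leq 1$, which gives $1 - \eta_i \geq 0$ for every $2 \leq i \leq \vert V_2\vert$.

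Granting this, the singular values of $\lambda(k_1 k_2 - k_{1,2})$ are the nonzero values among $1 - \eta_2, \dots, 1 - \eta_{\vert V_2\vert}$. The largest of them is at most $1 - \eta_2$, which gives $\Vert \lambda(k_1 k_2 - k_{1,2})\Vert \leq 1 - \eta_2$. For the Schatten norm, each term satisfies $0 \leq 1 - \eta_i \leq 1 - \eta_2$ and there are at most $\vert V_2\vert - 1$ of them, so
\[
\Vert \lambda(k_1 k_2 - k_{1,2})\Vert_{S^r}^r \;\leq\; \sum_{i=2}^{\vert V_2\vert} (1 - \eta_i)^r \;\leq\; (\vert V_2\vert - 1)(1 - \eta_2)^r \;\leq\; \vert V_2\vert\,(1 - \eta_2)^r .
\]
Taking $r$-th roots gives the first displayed inequality of the corollary, and then the second, since $\vert V_2\vert = \min\{\vert V_1\vert, \vert V_2\vert\}$ under our normalization $\vert V_2\vert \leq \vert V_1\vert$.

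I do not expect a serious obstacle: the corollary is essentially a bookkeeping consequence of Lemma~\ref{Laplacian eigenvalues lemma}. The only place that needs care is the eigenvalue count in the second paragraph, which is precisely what licenses writing the bounds in terms of $1 - \eta_i$ rather than $\vert 1 - \eta_i\vert$; the various degenerate configurations (e.g.\ $\min\{\vert V_1\vert,\vert V_2\vert\}=1$, where $\lambda(k_1k_2-k_{1,2})=0$) should be dispatched trivially.
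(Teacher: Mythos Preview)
Your proof is correct and complete. The paper states this corollary without proof, treating it as an immediate consequence of Lemma~\ref{Laplacian eigenvalues lemma}; your argument supplies exactly the missing details, and in particular makes explicit the one point that is not entirely automatic: that $\eta_i \leq 1$ for all $i \leq \min\{|V_1|,|V_2|\}$, so that the absolute values $|1-\eta_i|$ can be replaced by $1-\eta_i$. Your spectral-counting argument using Propositions~\ref{bipartite graph spectrum proposition 1} and~\ref{bipartite graph spectrum proposition 2} is the right way to establish this, and is carried out correctly.
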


\section{Examples and applications}
\subsection{Groups acting on simplicial complexes}
We'll start by recalling some basic definitions regarding simplicial complexes. Let $\Sigma$ be a purely $n$-dimensional simplicial complex (i.e., every simplex is a face of an $n$-simplex). $\Sigma$ is called gallery connected if for every two $n$-dimensional simplices $\sigma, \sigma'$, there is a finite sequence of $n$-dimensional simplices $\sigma = \sigma_1, \sigma_2,...,\sigma_m = \sigma'$ such that for every $i$, $\sigma_i$ and $\sigma_{i+1}$ share an $(n-1)$-dimensional face. 

Recall that for every simplex $\sigma$ in $\Sigma$ one can define a new simplicial complex $link (\sigma)$ as the sub-complex of $\Sigma$ that contains all the simplices $\sigma '$ that are disjoint from $\sigma$ such that there is an $n$-dimensional simplex that contains both $\sigma$ and $\sigma '$. Observe that the dimension of $link (\sigma)$ is always $n - dim (\sigma)-1$. In particular, the $1$-dimensional links of $\Sigma$ are the links of simplices of dimension $n-2$. 

Assume that $\Sigma$ is a pure $n$-dimensional simplicial complex that is gallery connected  such that the $1$-dimensional links of $\Sigma$ are finite connected graphs. Assume further that $G$ is a group acting on $\Sigma$ simplicially such that the fundamental domain $\Sigma / G$ is a single $n$ dimensional simplex and such that the stabilizers of all the $(n-2)$-simplices of $\Sigma$ are compact. Fix $\lbrace v_1,...,v_{n+1} \rbrace \in \Sigma^{(n)}$ and denote $K_i$ to be the stabilizer of $\lbrace v_1,..., \hat{v_i},...,v_{n+1} \rbrace$ and $K_{i,j}$ to be the stabilizer of $\lbrace v_1,..., \hat{v_i},...,\hat{v_j},...,v_{n+1} \rbrace$. The assumption that $\Sigma$ is galley connected implies that $K_1,...,K_{n+1}$ generate $G$. Also, the assumptions on the $1$-dimensional links of $\Sigma$ imply that $K_{1,2},...,K_{n,n+1}$ are compact groups, $K_i,K_j$ are finite index subgroups of $K_{i,j}$ and $K_{i,j} = \langle K_i, K_j \rangle$. Further more, the $1$-dimensional link of $\lbrace v_1,..., \hat{v_i},...,\hat{v_j},...,v_{n+1} \rbrace$ can be identified with the graph $\mathcal{G}$ defined by $K_i, K_{j}$ and $K_{i,j}$ (see definition \ref{graph of K_1, K_2 definition} above).   

Using corollary \ref{Laplacian eigenvalues corollary} above, we can state the following theorem generalizing theorem \ref{groups acting on building theorem intro} stated in the introduction:
\begin{theorem}
Let $\Sigma$ be a pure $n$-dimensional simplicial complex that is galley connected. Let $G$ be a group acting simplicially on a $\Sigma$ such that the action is cocompact and the fundamental domain $\Sigma / G$ is a single $n$-dimensional simplex $\lbrace v_1,...,v_{n+1} \rbrace$ and such that the stabilizer of every $(n-2)$-dimensional simplex of $\Sigma$ is a compact subgroup of $G$. Assume that for every $\tau_{i,j} = \lbrace v_1,..., \hat{v_i},...,\hat{v_j},...,v_{n+1} \rbrace$, the link of $\tau_{i,j}$ is a finite connected (bipartite) graph $(V^{i,j},E^{i,j})$, where $V^{i,j}_1,V^{i,j}_2$ are the two sides of this graph. Denote
$$L = \max_{1 \leq i < j \leq n+1} \min \lbrace \vert V^{i,j}_1 \vert, \vert V^{i,j}_2 \vert \rbrace.$$ 

Assume further that there is a constant $\eta > 1- \frac{1}{8(n+1)-11}$ such that for every $1 \leq i < j \leq n +1$, the smallest positive eigenvalue of the Laplacian on the link of $\tau_{i,j}$ is $\geq \eta$. Fix a constant $c'$ such that $1-\eta \leq c' < \frac{1}{8(n+1)-11}$. Define the following Banach classes:
\begin{enumerate}
\item The class $\mathcal{E}_1  = \overline{B_{BM} (\mathcal{H}, \delta) \cup Int(\mathcal{H}, \geq \theta)},$
where $\delta = \frac{c'}{1-\eta} -1, \theta = \frac{\ln (2)- \ln (c')}{\ln (2) - \ln (1-\eta) }$, $\mathcal{H}$ is the class of all Hilbert spaces.
\item For $(p_1,p_2,r) \in  (1,2] \times [2, \infty) \times [2, \infty)$ such that $\frac{1}{p_1} - \frac{1}{p_2} < \frac{1}{r}$, the class $\mathcal{E} (p_1,p_2,r)$ defined as follows: 
$$ \overline{\bigcup_{\begin{scriptsize}
\begin{array}{c} 
T_{p_1}, C_{p_2} \in [1, \infty) \\
c(p_1,p_2,r,q) T_{p_1} C_{p_2} \leq c'
\end{array}  \end{scriptsize}} Int \left(\mathcal{T} (p_1, p_2, T_{p_1}, C_{p_2} ), \geq \frac{\ln (2) - \ln (c')}{\ln (2) - \ln (c(p_1,p_2,r,q) T_{p_1} C_{p_2})} \right)},$$
where $c(p_1,p_2,r,q) =L^{\frac{1}{r}} (1-\eta) \sum_i 2^{\frac{r}{r-1} (\frac{1}{p_1} - \frac{1}{p_2} - \frac{1}{r} )i}$.
Note that $\mathcal{E} (p_1,p_2,r)$ is non empty only if $c(p_1,p_2,r,q) \leq c'$.
\item The class $\mathcal{E}$ defined as follows: denote
$$A (q) = \left \lbrace (q_1,q_2, r) \in (1,2] \times [2, \infty) \times [2, \infty) :  \frac{1}{q_1} - \frac{1}{q_2} < \frac{1}{r}, c(q_1,q_2,r)  \leq c' \right\rbrace. $$
Then $\mathcal{E} = \overline{\mathcal{E}_1 \cup \bigcup_{(q_1,q_2,r) \in A(q)} \mathcal{E} (q_1,q_2,r)}.$
\end{enumerate}
Then $G$ has robust Banach property (T) with respect to $\mathcal{E}$ and has property $F_X$ for every $X \in \mathcal{E}$. In particular, $G$ has property $F_{L^p}$ for any $p < 2\frac{\ln (2) - \ln (1-\eta)}{\ln (2) + \ln (8(n+1)-11)}$. 
\end{theorem}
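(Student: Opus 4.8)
The plan is to run the combinatorial spectral input through the two robust Banach property (T) criteria of this section and then to amalgamate the resulting Banach classes. As recorded in the discussion preceding the theorem, the subgroups $K_i$ and $K_{i,j}$ defined above fit the standing set-up of Section~4: each is compact of positive Haar measure, $K_{i,j}=\langle K_i,K_j\rangle$, and $K_1,\dots,K_{n+1}$ generate $G$ because $\Sigma$ is gallery connected; moreover the link of $\tau_{i,j}$ is precisely the bipartite graph $\mathcal{G}$ attached to the triple $(K_i,K_j,K_{i,j})$ in Definition~\ref{graph of K_1, K_2 definition}, with $V_1^{i,j},V_2^{i,j}$ its two sides. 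The first step is to invoke Corollary~\ref{Laplacian eigenvalues corollary}: since the smallest positive eigenvalue of the Laplacian on each link is $\geq\eta$, for every $1\leq i<j\leq n+1$ we obtain $\Vert\lambda_{i,j}(k_ik_j-k_{i,j})\Vert\leq 1-\eta$ and, for every $r\in[2,\infty)$, $\Vert\lambda_{i,j}(k_ik_j-k_{i,j})\Vert_{S^r}\leq (1-\eta)\min\{\vert V_1^{i,j}\vert^{1/r},\vert V_2^{i,j}\vert^{1/r}\}\leq(1-\eta)L^{1/r}$; the operators $\lambda_{i,j}(k_jk_i-k_{i,j})$ are their adjoints and satisfy the same bounds.

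\textbf{Producing the pieces.} For a Hilbert space $H$ the operators $\lambda_{i,j}(k_i)\otimes id_H$, $\lambda_{i,j}(k_j)\otimes id_H$, $\lambda_{i,j}(k_{i,j})\otimes id_H$ are orthogonal projections, so $\cos^H(\angle(k_i,k_j))=\Vert\lambda_{i,j}(k_ik_j-k_{i,j})\Vert$, and by Peter-Weyl (every unitary representation of the compact group $K_{i,j}$ sits inside a multiple of $\lambda_{i,j}$) we get $\cos(\angle(H^{\pi(K_i)},H^{\pi(K_j)}))\leq 1-\eta$ for every unitary representation $\pi$ of $K_{i,j}$. Since $1-\eta\leq c'<\frac{1}{8(n+1)-11}$, Theorem~\ref{mSBT from angles in unitary rep theorem} applies with $c=1-\eta$ and yields robust Banach property (T) with respect to $\mathcal{E}_1$ together with the bound $\cos^{\mathcal{E}_1}_{\max}\leq c'$. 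In parallel, fix a triple $(p_1,p_2,r)\in(1,2]\times[2,\infty)\times[2,\infty)$ with $\frac{1}{p_1}-\frac{1}{p_2}<\frac1r$; combining the Schatten estimate above with Proposition~\ref{Schatten norms prop for cos} gives $\cos^{\mathcal{T}(p_1,p_2,T_{p_1},C_{p_2})}_{\max}\leq M\,T_{p_1}C_{p_2}(1-\eta)L^{1/r}=c(p_1,p_2,r)\,T_{p_1}C_{p_2}$, where $M=\sum_i 2^{\frac{r}{r-1}(\frac1{p_1}-\frac1{p_2}-\frac1r)i}$. Hence whenever $(p_1,p_2,r)\in A(q)$, i.e.\ $c(p_1,p_2,r)\leq c'$, Theorem~\ref{mSBT via Schatten norms} gives robust Banach property (T) with respect to $\mathcal{E}(p_1,p_2,r)$ and the bound $\cos^{\mathcal{E}(p_1,p_2,r)}_{\max}\leq c'$.

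\textbf{Amalgamation and fixed points.} Since $\cos^X_{\max}\leq c'$ is a condition on the individual space $X$, it holds for every member of $\mathcal{E}_1\cup\bigcup_{(q_1,q_2,r)\in A(q)}\mathcal{E}(q_1,q_2,r)$, and Proposition~\ref{closure of mathcal(E) prop} promotes this to $\cos^{\mathcal{E}}_{\max}\leq c'<\frac{1}{8(n+1)-11}$. Applying Theorem~\ref{mSBT criterion via regular rep} with $\varepsilon=1-c'(8(n+1)-11)>0$ produces an $s_0>0$ for which $\left(\frac{k_1+\dots+k_{n+1}}{n+1}\right)^n$ converges in $C_{\mathcal{F}(\mathcal{E},K,s_0)}$ to a projection $p$ with $\pi(p)$ a projection onto $X^\pi$ for every $\pi\in\mathcal{F}(\mathcal{E},K,s_0)$; this is robust Banach property (T) with respect to $\mathcal{E}$. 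Because $\mathcal{E}\supseteq\mathcal{E}_1\supseteq B_{BM}(\mathcal{H},\delta)\supseteq\mathcal{H}$ and $\mathcal{E}$ is stable under $l_2$-sums, for each $X\in\mathcal{E}$ the space $\mathbb{C}\oplus X$ with the $l_2$ norm again lies in $\mathcal{E}$, so Proposition~\ref{fixed point proposition} gives property $F_X$.

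\textbf{The $L^p$ consequence, and the main difficulty.} Robust Banach property (T) with respect to the Hilbert spaces supplies in particular a Kazhdan projection, so $G$ has property (T) and hence property $F_{L^p}$ for $p\in[1,2]$ by \cite{BFGM} and \cite{BGM}. For $p>2$ the space $L^p$ is strictly $\frac2p$-Hilbertian, so it lies in $Int(\mathcal{H},\geq\theta)\subseteq\mathcal{E}_1\subseteq\mathcal{E}$ as soon as $\frac2p\geq\theta=\frac{\ln 2-\ln c'}{\ln 2-\ln(1-\eta)}$, i.e.\ $p\leq\frac2\theta$; for such $p$ property $F_{L^p}$ follows from the previous paragraph. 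Finally, to get the stated strict bound, note that given $p<2\frac{\ln 2-\ln(1-\eta)}{\ln 2+\ln(8(n+1)-11)}$ one can first choose $c'$ close enough to $\frac{1}{8(n+1)-11}$ that $p\leq\frac2\theta$ --- this is possible because $\frac2\theta\to 2\frac{\ln 2-\ln(1-\eta)}{\ln 2+\ln(8(n+1)-11)}$ as $c'\to\frac{1}{8(n+1)-11}$ --- and then run the whole argument with that $c'$. No individual step is hard once Corollary~\ref{Laplacian eigenvalues corollary} and the criteria of this section are available; I expect the only real obstacle to be the bookkeeping: keeping the chain of constants $\delta,\theta,M,c(p_1,p_2,r),\varepsilon$ compatible with the hypotheses of Theorems~\ref{mSBT from angles in unitary rep theorem}, \ref{mSBT via Schatten norms} and \ref{mSBT criterion via regular rep}, and faithfully tracking the $c'$-dependence in the final limiting argument for $F_{L^p}$.
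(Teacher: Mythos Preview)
Your proposal is correct and follows essentially the same route as the paper's own proof: identify each link with the bipartite graph of Definition~\ref{graph of K_1, K_2 definition}, feed the spectral-gap hypothesis through Corollary~\ref{Laplacian eigenvalues corollary} to obtain the operator-norm and Schatten-norm bounds, and then invoke Theorems~\ref{mSBT from angles in unitary rep theorem} and~\ref{mSBT via Schatten norms}. Your write-up is in fact more careful than the paper's one-line proof, in that you make explicit the amalgamation step (pulling the $\cos^X_{\max}\leq c'$ bound through Proposition~\ref{closure of mathcal(E) prop} to the full class $\mathcal{E}$ and re-applying Theorem~\ref{mSBT criterion via regular rep}), the passage to $F_X$ via Proposition~\ref{fixed point proposition}, and the limiting choice of $c'$ needed for the sharp $F_{L^p}$ range.
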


\begin{proof}
As noted above, by the assumption of the theorem, $G$ is generated by $K_1,...,K_{n+1}$ and for every $1 \leq i < j \leq n+1$ the link of $\lbrace v_1,..., \hat{v_i},...,\hat{v_j},...,v_{n+1} \rbrace$ can be identified with the graph $\mathcal{G}$ generated by $K_i, K_{j}, K_{i,j}$ as in definition \ref{graph of K_1, K_2 definition}. Therefore the proof of the theorem follows by applying corollary \ref{Laplacian eigenvalues lemma} combined with theorems \ref{mSBT from angles in unitary rep theorem}, \ref{mSBT via Schatten norms}. 
\end{proof}

\begin{remark}
The reader should Note the asymptotic behaviour of the above theorem as $\eta \rightarrow 1$. For instance, as $\eta \rightarrow 1$, $G$ has property $F_{L^p}$ with $p \rightarrow \infty$.
\end{remark}

\subsection{Groups where the $K_{i,j}$'s are Heisenberg or Abelian}

Let $K=K_{1,2}$ be a finite group generated by two subgroups $K_1,K_2$ and let $k_1, k_2, k_{1,2}$ be the functions in $C(K)$ defined in the beginning of the previous section. As we saw in theorems \ref{mSBT criterion via regular rep}, \ref{mSBT via Schatten norms} and corollaries \ref{fixed point corollary from angles in unitary rep}, \ref{fixed point property via Schatten norms}, it is useful to be able to calculate the eigenvalues of $k_1 k_2- k_{1,2}$ in all the irreducible representations of $K$. There are two cases where calculating those is relatively easy. The first case is $K_{1,2}$ is Abelian (or more generally, when $K_1$ and $K_2$ commute). In this case we have that $k_1 k_2 - k_{1,2} = k_2 k_1- k_{1,2} = 0$ (all the eigenvalues are $0$). The second case is where $K$ is the Heisenberg group modulo $q$, $H_q$, for some prime $q$ and $K_1$, $K_2$ are the groups generated by the standard generator of $H_q$, i.e., 
$$K_1 = \langle x : x^q =1 \rangle,$$
$$K_2 = \langle y : y^q =1 \rangle,$$
$$K = \langle x,y :[[x,y],x] =[[x,y],y] = x^q = y^q = [x,y]^q =1 \rangle.$$
In this case, the irreducible representations of $K$ are well known and  therefore we can use them to bound $\cos (\angle (H^{\pi (K_1)}, H^{\pi (K_2)}))$ and calculate the $r$-Schatten norm for $k_1 k_2 - k_{1,2}, k_2 k_1 - k_{1,2}$ in $L^2 (K)$:
\begin{enumerate}
\item $K$ has $q^2$ irreducible representations of degree $1$. Note that for any representation $\pi$ of degree $1$, we always have $\pi (k_1 k_2 - k_{1,2}) =0$.
\item $K$ has $q-1$ irreducible representations of degree $q$ described as follows: every non trivial $q$-root of unity $\zeta$,  define the representation $\pi_\zeta$ on $\mathbb{C}^q$ as follows: let $e_1,...,e_q$ be the standard basis of $\mathbb{C}^q$, then $\pi_\zeta$ is defined as follows
$$\pi_\zeta (x).e_i =  \zeta^i e_i , \pi_\zeta (y).e_i = e_{i+1} .$$
One can calculate that $\pi_\zeta (k_1 k_2 - k_{1,2})$ has $\frac{1}{\sqrt{q}}$ as an eigenvalue of multiplicity $1$ and all the other eigenvalues are $0$.
\item From the above, we get that for every unitary representation $\pi$ of $K$ on a Hilbert space $H$ we have that
$$\cos (\angle (H^{\pi (K_1)}, H^{\pi (K_2)})) \leq \frac{1}{\sqrt{q}}.$$ 
Also, using Peter-Weyl theorem to decompose $L^2 (K)$ as matrix coefficients, we can find that  
$$\Vert \lambda (k_1 k_2 - k_{1,2}) \Vert_{S^r} = \dfrac{(q^2-q)^{\frac{1}{r}}}{\sqrt{q}}.$$
\end{enumerate}    
Using the above computation, we can prove the following theorem:
\begin{theorem}
\label{Abelian or Heisenberg K_ij theorem}
Let $G$ be a discrete group generated by finite Abelian subgroups $K_1,...,K_N$ of order $q$, where $q$ is prime such that $q > (8N-11)^2$. Assume that for every $1 \leq i < j \leq N$ one of the following holds: either $K_i$ and $K_j$ commute (and therefore $K_{i,j}$ is $\mathbb{F}_q \times \mathbb{F}_q$) or $K_{i,j} = H_q$  and $K_i$, $K_j$ are the subgroups generated by the standard generators of $H_q$. Fix a constant $c'$ such that $\frac{1}{\sqrt{q}} \leq c' < \frac{1}{8N-11}$. Define the following Banach classes:
\begin{enumerate}
\item The class $\mathcal{E}_1  = \overline{B_{BM} (\mathcal{H}, \delta) \cup Int(\mathcal{H}, \geq \theta)},$
where $\delta = c' \sqrt{q}  -1, \theta = \frac{\ln (2)- \ln (c')}{\ln (2) + \ln (\sqrt{q}) }$, $\mathcal{H}$ is the class of all Hilbert spaces.
\item For $(p_1,p_2,r) \in  (1,2] \times [2, \infty) \times [2, \infty)$ such that $\frac{1}{p_1} - \frac{1}{p_2} < \frac{1}{r}$, the class $\mathcal{E} (p_1,p_2,r)$ defined as follows: 
$$ \overline{\bigcup_{\begin{scriptsize}
\begin{array}{c} 
T_{p_1}, C_{p_2} \in [1, \infty) \\
c(p_1,p_2,r,q) T_{p_1} C_{p_2} \leq c'
\end{array}  \end{scriptsize}} Int \left(\mathcal{T} (p_1, p_2, T_{p_1}, C_{p_2} ), \geq \frac{\ln (2) - \ln (c')}{\ln (2) - \ln (c(p_1,p_2,r,q) T_{p_1} C_{p_2})} \right)},$$
where $c(p_1,p_2,r,q) =\frac{(q^2 - q)^{\frac{1}{r}}}{\sqrt{q}} \sum_i 2^{\frac{r}{r-1} (\frac{1}{p_1} - \frac{1}{p_2} - \frac{1}{r} )i}$.
Note that $\mathcal{E} (p_1,p_2,r)$ is non empty only if $c(p_1,p_2,r,q) \leq c'$.
\item The class $\mathcal{E}$ defined as follows: denote
$$A (q) = \left \lbrace (q_1,q_2, r) \in (1,2] \times [2, \infty) \times [2, \infty) :  \frac{1}{q_1} - \frac{1}{q_2} < \frac{1}{r}, c(q_1,q_2,r)  \leq c' \right\rbrace. $$
Then $\mathcal{E} = \overline{\mathcal{E}_1 \cup \bigcup_{(q_1,q_2,r) \in A(q)} \mathcal{E} (q_1,q_2,r)}.$
\end{enumerate}
Then $G$ has robust Banach property (T) with respect to $\mathcal{E}$ and has property $F_X$ for every $X \in \mathcal{E}$. In particular, $G$ has property $F_{L^p}$ for any $p < 2\frac{\ln (2) + \ln (\sqrt{q})}{\ln (2) + \ln (8N-11)}$. 
\end{theorem}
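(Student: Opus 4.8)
The plan is to feed the explicit representation-theoretic data recorded just above the statement into the general criteria of Section~4. For each pair $1\le i<j\le N$ there are two cases. If $K_i$ and $K_j$ commute then $k_ik_j-k_{i,j}=0$ in $C(K_{i,j})$, so every quantity attached to this pair vanishes. If instead $K_{i,j}=H_q$, then the computation above (using the classification of the irreducible representations of $H_q$ and Peter--Weyl) gives, for every unitary representation $\pi$ of $K_{i,j}$ on a Hilbert space $H$, the bound $\cos(\angle(H^{\pi(K_i)},H^{\pi(K_j)}))\le \tfrac{1}{\sqrt q}$, and $\|\lambda(k_ik_j-k_{i,j})\|_{S^r}=\tfrac{(q^2-q)^{1/r}}{\sqrt q}$ for every $r\in[1,\infty)$. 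Consequently $\cos^{\mathcal H}_{\max}\le \tfrac1{\sqrt q}$ and $\cos^{S^r}_{\max}\le \tfrac{(q^2-q)^{1/r}}{\sqrt q}$. The hypothesis $q>(8N-11)^2$ is exactly what guarantees $\tfrac1{\sqrt q}<\tfrac1{8N-11}$, so a constant $c'$ with $\tfrac1{\sqrt q}\le c'<\tfrac1{8N-11}$ exists, and for $(q_1,q_2,r)\in A(q)$ one has $c(q_1,q_2,r,q)=M\cos^{S^r}_{\max}\le c'<\tfrac1{8N-11}$.

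Next I would invoke the two methods of Section~4 separately. Applying Theorem~\ref{mSBT from angles in unitary rep theorem} (equivalently Corollary~\ref{enlarging mathcal(E) corollary}) with $c=\tfrac1{\sqrt q}$ yields $\cos^{\mathcal E_1}_{\max}\le c'$; applying Proposition~\ref{Schatten norms prop for cos} together with the interpolation/closure machinery of Theorem~\ref{mSBT via Schatten norms} yields $\cos^{\mathcal E(q_1,q_2,r)}_{\max}\le c'$ for each $(q_1,q_2,r)\in A(q)$. Since $\cos^{\,\cdot}_{\max}$ is a supremum over the class, the union $\mathcal E_1\cup\bigcup_{(q_1,q_2,r)\in A(q)}\mathcal E(q_1,q_2,r)$ again satisfies $\cos_{\max}\le c'$, and by Proposition~\ref{closure of mathcal(E) prop} so does its closure $\mathcal E$. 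Taking $\varepsilon=1-(8N-11)c'>0$, Theorem~\ref{mSBT criterion via regular rep} then produces $s_0>0$ such that $\bigl(\tfrac{k_1+\dots+k_N}{N}\bigr)^{\ast n}\to p$ in $C_{\mathcal F(\mathcal E,K,s_0)}$ with $\pi(p)$ a projection onto $X^\pi$ for every $\pi\in\mathcal F(\mathcal E,K,s_0)$. Since each $k_i$ is real and symmetric with $\int k_i=1$, the convolution powers $f_n=\bigl(\tfrac{k_1+\dots+k_N}{N}\bigr)^{\ast n}$ are real, symmetric and satisfy $\int f_n=1$, so this is precisely robust Banach property (T) with respect to $\mathcal E$ in the sense of Definition~\ref{robust property T definition - compact generation} (here $K=\bigcup K_{i,j}$ is the finite, hence compact, symmetric generating set).

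For the fixed point conclusions, note that $\mathcal E$ contains every Hilbert space (each $H$ lies in $B_{BM}(\mathcal H,\delta)\subseteq\mathcal E_1$) and is closed under $\ell_2$-sums, so $\mathbb C\oplus X$ with the $\ell_2$ norm lies in $\mathcal E$ for every $X\in\mathcal E$; Proposition~\ref{fixed point proposition} then gives property $F_X$. In particular $G$ has property $F_H$ for every Hilbert space $H$, i.e.\ property (FH), hence property (T), hence $F_{L^p}$ for every $p\in[1,2]$ by \cite{BFGM} and \cite{BGM}. For $p>2$ the space $L^p$ is $\tfrac2p$-Hilbertian, so $L^p\in Int(\mathcal H,\ge\theta)\subseteq\mathcal E_1\subseteq\mathcal E$ whenever $\tfrac2p\ge\theta=\tfrac{\ln 2-\ln c'}{\ln 2+\ln\sqrt q}$, that is, whenever $p\le \tfrac{2(\ln 2+\ln\sqrt q)}{\ln 2-\ln c'}$; letting $c'\nearrow\tfrac1{8N-11}$ this threshold increases to $2\tfrac{\ln 2+\ln\sqrt q}{\ln 2+\ln(8N-11)}$, which is strictly larger than $2$ by the hypothesis $q>(8N-11)^2$. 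Combining with the range $[1,2]$ already covered, $G$ has $F_{L^p}$ for all $p<2\tfrac{\ln 2+\ln\sqrt q}{\ln 2+\ln(8N-11)}$.

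The honest remark about difficulty is that there is essentially no analytic obstacle left at this point: the real work is the representation theory of $H_q$ carried out above and the general machinery of Sections~3--4. The only points needing care are bookkeeping: verifying that passing to the union of the subclasses and then to the closure $\overline{\,\cdot\,}$ does not destroy the bound $\cos_{\max}\le c'<\tfrac1{8N-11}$ (which is where Proposition~\ref{closure of mathcal(E) prop} and the fact that $\cos_{\max}$ is a supremum are used), choosing $c'$ near $\tfrac1{8N-11}$ to extract the advertised range of $p$, and recalling that the range $p\in[1,2]$ is obtained not from the $\theta$-Hilbertian argument but from the chain robust Banach property (T) $\Rightarrow$ (FH) $\Rightarrow$ (T) $\Rightarrow F_{L^p}$.
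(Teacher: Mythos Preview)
Your proposal is correct and follows essentially the same route as the paper's own proof, which simply records the two bounds $\cos(\angle(H^{\pi(K_i)},H^{\pi(K_j)}))\le 1/\sqrt{q}$ and $\cos^{S^r}_{\max}\le (q^2-q)^{1/r}/\sqrt{q}$ from the Heisenberg computation and then invokes Theorems~\ref{mSBT from angles in unitary rep theorem}, \ref{mSBT via Schatten norms} and Proposition~\ref{fixed point proposition}. Your version is more explicit about the bookkeeping (passing to the union and then the closure via Proposition~\ref{closure of mathcal(E) prop}, and optimizing over $c'$ to extract the $L^p$ range), and you correctly observe that the interval $p\in[1,2]$ is not covered by the $\theta$-Hilbertian argument alone but follows from robust Banach property~(T) $\Rightarrow$ (FH) $\Rightarrow$ (T) $\Rightarrow F_{L^p}$ via \cite{BFGM}, \cite{BGM}; the paper's terse proof leaves this last point implicit.
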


\begin{proof}
Using the above computations for Heisenberg groups $H_q$, we have that
\begin{itemize}
\item For every $1 \leq i < j \leq N$ and every unitary representation $\pi$ of $K$ on a Hilbert space $H$ we have that
$$\cos (\angle (H^{\pi (K_i)}, H^{\pi (K_j)})) \leq \frac{1}{\sqrt{q}}.$$ 
\item For every $r \geq 2$ we have that $\cos^{S^r}_\max \leq \frac{(q^2 -q)^{\frac{1}{r}}}{\sqrt{q}}$. 
\end{itemize}
Therefore we can apply theorems \ref{mSBT from angles in unitary rep theorem}, \ref{mSBT via Schatten norms} and proposition \ref{fixed point proposition} to get the above theorem. 
\end{proof}

\begin{remark}
\label{1/p1-1/p2 < 1/4 remark}
The reader should note the asymptotic behaviour of the above theorem when $q \rightarrow \infty$. For instance, for a group $G$ as above and any constants $(p_1,p_2) \in  (1,2] \times [2, \infty)$, $T_{p_1} \geq 1, C_{p_2} \geq 1$, $\theta >0$, if $\frac{1}{p_1} - \frac{1}{p_2} < \frac{1}{4}$, then there is a constant $q(p_1,p_2, T_{p_1}, C_{p_2}, \theta,N)$ such that for any prime $q \geq q(p_1,p_2, T_{p_1}, C_{p_2}, \theta,N)$, we have that for $\mathcal{E}$ defined above 
$$\overline{Int (\mathcal{T} (p_1,p_2, T_{p_1}, C_{p_2}), \geq \theta)} \subset \mathcal{E}.$$
Below we shall use this asymptotic behaviour to construct an expander family of graphs that does not uniformly coarsely embed in any Banach space $X \in \overline{Int (\mathcal{T} (p_1,p_2, T_{p_1}, C_{p_2}), \geq \theta)}$ for given $(p_1,p_2) \in  (1,2] \times [2, \infty)$, $T_{p_1} \geq 1, C_{p_2} \geq 1$, $\theta >0$ as above.
\end{remark}
Below we shall give some examples of groups for which the conditions of theorem \ref{Abelian or Heisenberg K_ij theorem} are fulfilled.

\subsubsection{Kac-Moody-Steinberg groups over a finite field}
Basic Kac-Moody-Steinberg groups where introduced in \cite{ErshovJZ} as follows:
\begin{definition}
Let $(V,E)$ be a finite graph without loops or multiple edges and let $R$ be a ring. For convenience, we denote  $V =  \lbrace 1,...,N \rbrace$. Define the group $G=G((V,E),R)$ as follows. First, define the groups $K_i$ for every $i \in V$ as the groups with elements $\lbrace x_i (s) : s \in R \rbrace$ under the relations $x_i (s_1) x_i (s_2) = x_i (s_1+s_2)$ for every $s_1,s_2 \in R$. Next define the group $G$ generated by $K_1,...,K_N$ under the following relations:
\begin{itemize}
\item For $1 \leq i < j \leq N$, if $\lbrace i, j \rbrace \notin E$, then $K_i$ and $K_j$ commute.
\item For $1 \leq i < j \leq N$, if $\lbrace i, j \rbrace \in E$, then for every $s_1,s_2 \in R$ we have that $[x_i (s_1), x_j (s_2) ] = [x_1 (1), x_j (s_1 s_2)]$ and $[K_i,K_j]$ commutes with both $K_i$ and $K_j$.
\end{itemize}
\end{definition}
We note that when $R = \mathbb{F}_q$, then for $G=G((V,E),\mathbb{F}_q)$ the following holds:
\begin{itemize}
\item Every $K_1,...,K_N$ are isomorphic to $\mathbb{F}_q$.
\item For $1 \leq i < j \leq N$, if $\lbrace i, j \rbrace \notin E$, then $K_i$ and $K_j$ commute.
\item For $1 \leq i < j \leq N$, if $\lbrace i, j \rbrace \in E$, then $K_{i,j}$ is the $q$ Heisenberg group $H_q$ and $K_i$, $K_j$ are the subgroups generated by the standard generators of $H_q$.
\end{itemize} 
Therefore for any graph $(V,E)$, $G((V,E),\mathbb{F}_q)$ fulfils the conditions of theorem \ref{Abelian or Heisenberg K_ij theorem} above.

\subsubsection{The groups $St_n (\mathbb{F}_q [ t_1,...,t_m ])$ and $EL_n ( \mathbb{F}_q [ t_1,...,t_m ])$}
For a ring $R$ and for $n \geq 3$, the Steinberg group $St_n (R)$ is a group generated by elements $x_{i,j} (s)$ where $1 \leq i \neq j \leq n$ and $s \in R$ which has the following defining relations:
$$\forall 1 \leq i \neq j \leq n, \forall s_1,s_2 \in R, x_{i,j} (s_1) x_{i,j} (s_2) = x_{i,j} (s_1+s_2),$$
$$\forall 1 \leq i \neq l \leq n, \forall s_1,s_2 \in R, [x_{i,j} (s_1), x_{k,l} (s_2)] = \begin{cases}
1 &  j \neq k \\
x_{i,l} (s_1 s_2) &  j=k
\end{cases}
.$$
The group of elementary matrices over $R$, denoted $EL_n (R)$ is the group of $n \times n$ matrices with entries in $R$, generated by the matrices $e_{i,j} (s)$ for $1 \leq i \neq j \leq n$ and $s \in R$, where $e_{i,j} (s)$ denotes the elementary matrix with $1$ on the diagonal, $s$ in the $(i,j)$ entry and $0$ in all the other entries. One can easily check that the relations specified above for $St_n (R)$ also hold for $EL_n (R)$, i.e., that
$$\forall 1 \leq i \neq j \leq n, \forall s_1,s_2 \in R, e_{i,j} (s_1) e_{i,j} (s_2) = e_{i,j} (s_1+s_2),$$
$$\forall 1 \leq i \neq l \leq n, \forall s_1,s_2 \in R, [e_{i,j} (s_1), e_{k,l} (s_2)] = \begin{cases}
1 &  j \neq k \\
e_{i,l} (s_1 s_2) &  j=k
\end{cases}
.$$
We shall show that for the ring of polynomials $\mathbb{F}_q [t_1,...,t_m]$, $St_n (\mathbb{F}_q [ t_1,...,t_m ])$ and $EL_n ( \mathbb{F}_q [ t_1,...,t_m ])$ fulfil the conditions of theorem \ref{Abelian or Heisenberg K_ij theorem}. We will show this only for $St_n (\mathbb{F}_q [ t_1,...,t_m ])$ since the proof is identical for both groups. Denote $t_0 =1 \in \mathbb{F}_q [t_1,...,t_m]$ and define the following subgroups $K_1	,...,K_{n+m}$ of $St_n (\mathbb{F}_q [ t_1,...,t_m ])$: 
$$\forall 1 \leq i  < n, K_i = \lbrace x_{i,i+1} (a \cdot 1) : a \in \mathbb{F}_p \rbrace,$$
$$\forall n \leq i \leq n+m, K_i = \lbrace x_{n,1} (a \cdot t_{i-n}) : a \in \mathbb{F}_p \rbrace.$$
To see that $K_1,...,K_{n+m}$ fulfil the conditions of theorem \ref{Abelian or Heisenberg K_ij theorem}, note the following:
\begin{itemize}
\item All the $K_i$'s are isomorphic to the group $\mathbb{F}_q$.
\item $K_1,..,K_{n+m}$ generate $St_n (\mathbb{F}_q [ t_1,...,t_m ])$.
\item For any $1 \leq i < j \leq n+m$, we have the following:
$$K_{i,j} = \begin{cases}
\mathbb{F}_q \times \mathbb{F}_q & 1 \leq i < j < n, j - i >1 \\
\mathbb{F}_q \times \mathbb{F}_q & 1 < i < n-1, n \leq j \leq n+m \\
\mathbb{F}_q \times \mathbb{F}_q & n \leq i < j \leq n +m \\
H_q & 1 \leq i <n-1, j = i+1 \\
H_q & i =1, n \leq j \leq n+m \\
H_q & i =n-1, n \leq j \leq n+m \\
\end{cases}.$$
\end{itemize}

Applying theorem \ref{Abelian or Heisenberg K_ij theorem} gives a result which generalize theorems \ref{Steinberg group theorem intro}, \ref{Steinberg group theorem intro2} stated in the introduction. 

\begin{remark}
In the case that $n \geq 4$, Mimura \cite{Mimura} using a completely different approach showed that for any finitely generated, unital, and associative ring $R$, $EL_n (R)$ and $St_n (R)$ have fixed point properties for every $L^p$ space provided that $p \in [1,\infty)$ and every non commutative $L^p$ space provided that $p \in (1, \infty)$ (see \cite{Mimura}[Corollary 1.4]). Therefore for fixed point properties for $L^p$ spaces (and non commutative $L^p$ spaces) Mimura's results are stronger than ours. However, one should note that our results covers fixed point properties for Banach spaces that are not superreflexive, which are not achieved by Mimura's work. We also deal with the case $n=3$, which is also not covered by Mimura's results. 
\end{remark}

\begin{remark}
We chose to phrase our results of the ring $\mathbb{F}_q [t_1,...,t_m]$, but the same proof the we gave above will also work for the ring $\mathbb{F}_q \langle t_1,...,t_m \rangle$.
\end{remark}

\subsection{Construction of Banach expanders}
Here we shall use our results regarding robust Banach property (T) of $EL_n (\mathbb{F}_q ([t_1,...,t_m])$ proven above to construct a family of graphs of uniformly bounded valency that are Banach expanders with respect to a large class of Banach spaces generalizing theorem \ref{Expander construction intro} given in the introduction. To be specific, for every constants $p_1 \in (1,2], p_2 \in [2, \infty), T_{p_1} \geq 1, C_{p_2} \geq 2, \theta >0$, such that $\frac{1}{p_1} - \frac{1}{p_2} < \frac{1}{4}$, we will construct a sequence of expanders that does not uniformly coarsely embed in any $X \in \overline{Int( \mathcal{T} (p_1,p_2,T_{p_1},C_{p_2}), \geq \theta)}$.

Let $n \geq 3, m \geq 1$ and let $q$ be some prime. We showed above that $EL_n (\mathbb{F}_q [t_1,...,t_m])$ fulfils the conditions of theorem \ref{Abelian or Heisenberg K_ij theorem}. Therefore by remark \ref{1/p1-1/p2 < 1/4 remark}, for any constants $p_1 \in (1,2], p_2 \in [2, \infty), T_{p_1} \geq 1, C_{p_2} \geq 1, \theta >0$, such that $\frac{1}{p_1} - \frac{1}{p_2} < \frac{1}{4}$, $EL_n (\mathbb{F}_q [t_1,...,t_m])$ has robust Banach property (T) with respect to $\overline{Int( \mathcal{T} (p_1,p_2,T_{p_1},C_{p_2}), \geq \theta)}$ provided that $q$ is large enough. 

Therefore by proposition \ref{expander proposition}, in order to construct a sequence of expanders with respect to $\overline{Int( \mathcal{T} (p_1,p_2,T_{p_1},C_{p_2}), \geq \theta)}$ it is enough to find a sequence of normal finite index groups $N_i < EL_n (\mathbb{F}_q [t_1,...,t_m])$ such that $\bigcap N_i = \lbrace 1 \rbrace$. This can be achieved by considering principal congruence subgroups of $EL_n (\mathbb{F}_q [t_1,...,t_m])$: for every $i \in \mathbb{N}$, denote $I_i$ to be the two sided ideal of $\mathbb{F}_q [t_1,...,t_m]$ that is generated by all the monomials in $t_1,...,t_m$ of degree $i$. Let $\psi_i$ be the homomorphism:
$$\psi_i :  EL_n (\mathbb{F}_q [t_1,...,t_m]) \rightarrow  EL_n (\mathbb{F}_q [t_1,...,t_m] / I_i),$$
and let $N_i = ker (\psi_i)$. From the fact that $\vert \mathbb{F}_q [t_1,...,t_m] / I_i \vert < \infty$, we get that $N_i$ is always of finite index and one can easily see that $\bigcap_i N_i = \lbrace 1 \rbrace$. Therefore we can conclude this discussion by stating our result:

\begin{proposition}
Let $p_1 \in (1,2], p_2 \in [2, \infty), T_{p_1} \geq 1, C_{p_2} \geq 1, \theta >0$ be constants such that $\frac{1}{p_1} - \frac{1}{p_2} < \frac{1}{4}$. For any $n \geq 3$ and any $m \geq 1$, there is a large enough prime $q$ such that for any fixed symmetric generating set $S$ of $EL_n (\mathbb{F}_q [t_1,...,t_m])$, we have that the Cayley graphs  $\lbrace (EL_n (\mathbb{F}_q [t_1,...,t_m]) / N_i, S) \rbrace_{i \in \mathbb{N}}$ is a family of $X$-expanders for any $X \in \overline{Int( \mathcal{T} (p_1,p_2,T_{p_1},C_{p_2}), \geq \theta)}$.
\end{proposition}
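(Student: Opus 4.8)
The plan is to feed the constructions already assembled in this subsection into Proposition \ref{expander proposition}, so the work is almost entirely a matter of checking its hypotheses. First I would record that $G := EL_n(\mathbb{F}_q[t_1,\dots,t_m])$ is a finitely generated discrete group: with $t_0 = 1$, the $n+m$ finite subgroups $K_1,\dots,K_{n+m}$ introduced above generate it and each $K_i \cong \mathbb{F}_q$. As shown above, for every pair $i<j$ the subgroup $K_{i,j} = \langle K_i, K_j \rangle$ is either $\mathbb{F}_q \times \mathbb{F}_q$ or the Heisenberg group $H_q$ with $K_i, K_j$ its standard generating subgroups, so once $q > (8(n+m)-11)^2$ the group $G$ satisfies the hypotheses of Theorem \ref{Abelian or Heisenberg K_ij theorem}.

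Next, since $\frac1{p_1} - \frac1{p_2} < \frac14$, I would invoke Remark \ref{1/p1-1/p2 < 1/4 remark} (with the parameter $N$ there equal to $n+m$) to obtain a threshold $q_0 = q_0(p_1,p_2,T_{p_1},C_{p_2},\theta,n,m)$ so that for every prime $q \geq q_0$ the class $\mathcal{E} := \overline{Int(\mathcal{T}(p_1,p_2,T_{p_1},C_{p_2}),\geq\theta)}$ is contained in the Banach class for which Theorem \ref{Abelian or Heisenberg K_ij theorem} gives $G$ robust Banach property (T). In particular $G$ has robust Banach property (T) with respect to $\mathcal{E}$, and $\mathcal{E}$ is closed under $l_2$-sums directly from the definition of the closure operation $\overline{\,\cdot\,}$.

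Finally I would take the congruence subgroups $N_i = \ker(\psi_i)$, where $\psi_i \colon G \to EL_n(\mathbb{F}_q[t_1,\dots,t_m]/I_i)$ and $I_i$ is the ideal generated by the degree-$i$ monomials, and verify: each $N_i$ is normal as a kernel; it has finite index because $\mathbb{F}_q[t_1,\dots,t_m]/I_i$ is a finite ring and hence $EL_n$ of it is a finite group; and $\bigcap_i N_i = \{1\}$ because $\bigcap_i I_i = \{0\}$, since a nonzero polynomial escapes $I_i$ as soon as $i$ exceeds its total degree, so a nontrivial elementary matrix has an off-diagonal or diagonal entry that survives modulo $I_i$ for large $i$. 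Applying Proposition \ref{expander proposition} with $G$, the class $\mathcal{E}$, the subgroups $\{N_i\}$, and the fixed symmetric generating set $S$ then yields that $\{(G/N_i,S)\}_{i\in\mathbb{N}}$ is a family of $X$-expanders for every $X \in \mathcal{E}$, which is the assertion.

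There is essentially no analytic difficulty left at this stage --- the real content sits in the $q\to\infty$ asymptotics of Theorem \ref{Abelian or Heisenberg K_ij theorem}, already isolated in Remark \ref{1/p1-1/p2 < 1/4 remark}. The only points requiring care are bookkeeping ones: matching the parameter $N$ of that theorem with $n+m$, and confirming that the specific class $\mathcal{E}$ is genuinely $l_2$-stable so that Proposition \ref{expander proposition} applies without modification.
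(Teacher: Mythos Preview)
Your proposal is correct and follows essentially the same approach as the paper: verify that $EL_n(\mathbb{F}_q[t_1,\dots,t_m])$ satisfies the hypotheses of Theorem \ref{Abelian or Heisenberg K_ij theorem}, invoke Remark \ref{1/p1-1/p2 < 1/4 remark} to obtain robust Banach property (T) with respect to $\mathcal{E}$ for $q$ large, check the congruence subgroups $N_i = \ker(\psi_i)$ are normal of finite index with trivial intersection, and apply Proposition \ref{expander proposition}. If anything, you are slightly more explicit than the paper in justifying $\bigcap_i N_i = \{1\}$ and in noting the $l_2$-stability of $\mathcal{E}$ needed for Proposition \ref{expander proposition}.
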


\appendix 
\section{Applications of robust Banach property (T)}
In this appendix, we'll prove the applications of robust Banach property (T) for fixed point properties and for Banach expanders. In both cases the proofs are just minor adaptations of the proofs of  Lafforgue in \cite{Lafforgue2}.
\subsection{Fixed point property application}
We shall prove the following:
\begin{proposition}
Let $X$ be a Banach space and let $G$ be a locally compact group. If $G$ has robust property (T) with respect to $\mathbb{C} \oplus X$ with the $l_2$ norm, then any affine isometric action of $G$ on $X$ has a fixed point. 
\end{proposition}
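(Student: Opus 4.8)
The plan is to deduce the fixed point from robust Banach property (T) applied to an auxiliary linear representation on $\mathbb{C}\oplus X$ built from the affine action, following the scheme of Lafforgue \cite{Lafforgue2}.

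First I would write the (continuous) affine isometric action as $\alpha(g)v = \pi(g)v + b(g)$, where $\pi$ is a strongly continuous linear isometric representation of $G$ on $X$ and $b\colon G \to X$ is a continuous $1$-cocycle, i.e. $b(g_1 g_2) = \pi(g_1) b(g_2) + b(g_1)$. Since $\pi$ is isometric, $\|b(g^{-1})\| = \|b(g)\|$ and $\|b(g_1 g_2)\| \le \|b(g_1)\| + \|b(g_2)\|$, so $l(g) := \|b(g)\|$ is a length over $G$ (continuity of $b$ gives continuity of $l$). I would then invoke the hypothesis in the form of Definition \ref{robust property T definition}, applied to this $l$ and to the one-element class $\mathcal{E} = \{\,\mathbb{C}\oplus X \text{ with the } \ell_2\text{-norm}\,\}$, obtaining $s_0 > 0$ and symmetric real functions $f_n \in C_c(G)$ with $\int f_n = 1$ that converge in $C_{\mathcal{F}(\mathcal{E}, s_0 l)}$ to $p$, with $\rho(p)$ the projection onto invariant vectors for every $\rho \in \mathcal{F}(\mathcal{E}, s_0 l)$.

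The key step is to introduce the representation $\tilde{\pi}\colon G \to \mathcal{B}(\mathbb{C}\oplus X)$, $\tilde{\pi}(g)(z,v) = (z,\, \pi(g)v + s_0 z\, b(g))$. A short computation with the cocycle identity shows that $\tilde{\pi}$ is a representation, it is strongly continuous because $\pi$ and $b$ are, and for a unit vector $(z,v)$ one has $\|\tilde{\pi}(g)(z,v)\|^2 \le |z|^2 + (\|v\| + s_0|z|\,\|b(g)\|)^2 \le (1 + s_0 l(g))^2 \le e^{2 s_0 l(g)}$, so $\tilde{\pi} \in \mathcal{F}(\mathcal{E}, s_0 l)$. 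Hence $\tilde{\pi}(f_n) \to \tilde{\pi}(p)$ in operator norm and $\tilde{\pi}(p)$ is a projection onto $(\mathbb{C}\oplus X)^{\tilde{\pi}}$. The point is that a vector $(z,v)$ with $z \neq 0$ is $\tilde{\pi}$-invariant if and only if $b(g) = (I - \pi(g))(v/s_0 z)$ for all $g$, i.e. iff $\alpha$ has a fixed point; so the task reduces to producing a $\tilde{\pi}$-invariant vector with nonzero first coordinate.

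To produce such a vector I would use that $\tilde{\pi}(g)$ acts as the identity on the first coordinate; since $\int f_n = 1$, each $\tilde{\pi}(f_n)$ — and therefore $\tilde{\pi}(p)$ — also acts as the identity there. Thus $\tilde{\pi}(p)(1,0) = (1, v_0)$ for some $v_0 \in X$, and $(1,v_0) \in \operatorname{Im} \tilde{\pi}(p) = (\mathbb{C}\oplus X)^{\tilde{\pi}}$, which unwinds to $\pi(g) v_0 + s_0 b(g) = v_0$ for all $g$, so $w := v_0 / s_0$ is the desired fixed point: $\alpha(g) w = \pi(g) w + b(g) = w$. I expect the only real obstacle to be the design of $\tilde{\pi}$ — getting the operator norm bound compatible with the length $l$ and arranging that the canonical projection $p$ automatically lands on a vector with nonzero $\mathbb{C}$-component; the remaining steps are routine verifications.
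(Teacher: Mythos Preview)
Your proof is correct and follows essentially the same route as the paper: linearize the affine action on $\mathbb{C}\oplus X$, check that the resulting representation lies in $\mathcal{F}(\mathcal{E},s_0 l)$, apply robust property~(T), and read off the fixed point from an invariant vector with nonzero first coordinate. The only difference is cosmetic: the paper introduces an auxiliary parameter $D$ (defining the representation so that $(D,v)\mapsto(D,\rho(g).v)$) and then chooses $D$ large enough to force $\Vert\pi(g)\Vert\le e^{s_0 l(g)}$, whereas you build $s_0$ directly into the cocycle term of $\tilde\pi$, which makes the norm estimate a one-line Cauchy--Schwarz computation and avoids the need to take $\max\{\Vert b(g)\Vert,1\}$ in the length.
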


\begin{proof}
Let $\rho$ be an isometric action of $G$ on $X$. Let $\overline{0} \in X$ be the zero of (the underlying vector space of) $X$ and define a length $l$ over $G$ as 
$$l(g) = \max \lbrace \Vert \rho (g).\overline{0}  \Vert, 1 \rbrace.$$
$G$ has robust property (T) with respect to $\mathbb{C} \oplus X$ and therefore there is some $s_0 >0$ and a sequence of positive symmetric real functions $f_n \in C_c (G)$ with $\int f_n =1$ such for every representation $\pi$ of $G$ on $\mathbb{C} \oplus X$ and for any $0 \leq s \leq s_0$, if $\Vert \pi (g) \Vert \leq e^{s l (g)}$, then $\pi (f_n)$ converges to $\pi (p)$ that is a projection on  $(\mathbb{C} \oplus X )^\pi$. \\
Fix $D>1$ to be a constant whose value will be determined later and define a representation $\pi$ as on $\mathbb{C} \oplus X$ as follows: $\pi$ is the unique representation on $ \mathbb{C} \oplus X$ such that 
$$\forall g \in G, \forall v \in X, \pi (g). (D,v) = (D,\rho (g).v).$$
In other words, $\pi$ is the representation that keeps $ D \oplus X$ invariant and acts on it via $\rho$. Next, we'll show that
$$\Vert \pi (g) \Vert \leq 1 + \sqrt{\dfrac{3}{D}} l(g), \forall g \in G.$$
Indeed,
\begin{dmath}
\label{bounding norm of pi - appendix}
\dfrac{\Vert \pi (g).(D,v) \Vert^2}{\Vert (D,v) \Vert^2} = \dfrac{D^2 + \Vert \rho (g). v \Vert^2}{D^2 + \Vert v \Vert^2} \leq  \dfrac{D^2 + (\Vert v \Vert+l(g) )^2}{D^2 + \Vert v \Vert^2}  = {1 + \dfrac{2 \Vert v \Vert l(g) + l(g)^2}{D^2 + \Vert v \Vert^2}} \leq \\ {1 + \dfrac{2 \Vert v \Vert +1}{D^2 + \Vert v \Vert^2} l(g)^2}.
\end{dmath}
Note that if $\Vert v \Vert \geq D$, then 
$$\dfrac{2 \Vert v \Vert +1}{D^2 + \Vert v \Vert^2} \leq \dfrac{3 \Vert v \Vert}{\Vert v \Vert^2} \leq \dfrac{3}{D}.$$
On the other hand, if $\Vert v \Vert < D$, then
$$\dfrac{2 \Vert v \Vert +1}{D^2 + \Vert v \Vert^2} \leq \dfrac{3 D}{D^2} = \dfrac{3}{D}.$$
Therefore, we have that for all $v$ that 
$$\dfrac{2 \Vert v \Vert +1}{D^2 + \Vert v \Vert^2} \leq  \dfrac{3}{D}.$$
Combined with \eqref{bounding norm of pi - appendix}, this yields
$$\dfrac{\Vert \pi (g).(D,v) \Vert^2}{\Vert (D,v) \Vert^2} \leq 1 + \dfrac{3}{D} l(g)^2,$$
and therefore
$$\dfrac{\Vert \pi (g).(D,v) \Vert}{\Vert (D,v) \Vert} \leq 1 + \sqrt{ \dfrac{3}{D} }l(g).$$
The above inequality implies that 
$$\Vert \pi (g) \Vert \leq 1 + \sqrt{\dfrac{3}{D}} l(g), \forall g \in G,$$
as needed. By choosing $D$ large enough, we can therefore insure that we'll have
$$\Vert \pi (g) \Vert \leq 1 + s_0 l(g) \leq e^{s_0 l(g)}, \forall g \in G.$$
Therefore $\pi$ meets the condition for robust Banach property (T) for $\mathbb{C} \oplus X$. Let $\lbrace f_n \rbrace$ be the sequence as in the definition of robust Banach property (T). Note that for every $n$ and every $v \in X$, $\pi (f_n). (D,v) \in D \oplus X$, since for every $n$, $\int f_n =1$. Fix some $v \in X$ and note that $\pi (p). (D,v)= \lim_n \pi (f_n).v \in D \oplus X$ and therefore there is some $v_0 \in X$ such that $\pi (p). (D,v) = (D,v_0)$. By the definition of $\pi$, $v_0$ is a fixed point of the action of $G$ on $X$ through $\rho$ and we are done.

\end{proof}

\subsection{Banach expanders application}
We shall prove the following:
\begin{proposition}
\label{expander proposition - appendix}
Let $G$ be a finitely generated discrete group and let $\lbrace N_i \rbrace_{i \in \mathbb{N}}$ be a sequence of finite index normal subgroups of $G$ such that $\bigcap_i N_i = \lbrace 1 \rbrace$. Let $\mathcal{E}$ be a class of Banach spaces that is closed under $l_2$ sums. Fix $S$ to be some symmetric generating set of $G$. If $G$ has robust Banach property (T) with respect to $\mathcal{E}$, then the family of Cayley graphs $\lbrace (G/N_i,S) \rbrace_{i \in \mathbb{N}}$ is a family of $X$-expanders for any $X \in \mathcal{E}$.    
\end{proposition}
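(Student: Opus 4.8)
The plan is to follow Lafforgue's scheme: extract from robust Banach property (T) a single finitely supported averaging kernel whose deviation from the projection onto invariants is uniformly small across all the quotients, record this as a uniform $X$-valued Poincar\'e (spectral-gap) inequality on the Cayley graphs $(G/N_i,S)$, and then deduce non-embeddability as a formal consequence. First I would introduce, for each $i$, the quasi-regular representation $\pi_i$ of $G$ on $\ell^2 (G/N_i ; X)$ given by $(\pi_i (g) F)(hN_i) = F(g^{-1} h N_i)$. This representation is isometric, so $\sup_{g \in K} \Vert \pi_i (g) \Vert = 1$ and hence $\pi_i \in \mathcal{F} (\mathcal{E}, K, s_0)$ for every $s_0 \geq 0$, using that $\ell^2 (G/N_i ; X)$ is a finite $\ell^2$-sum of copies of $X$ and $\mathcal{E}$ is closed under $\ell^2$-sums. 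Since $G$ acts transitively on $G/N_i$, the space $X^{\pi_i}$ of invariants consists exactly of the constant functions. By robust property (T) the sequence $f_n$ converges to $p$ in $C_{\mathcal{F} (\mathcal{E},K,s_0)}$, so $\sup_i \Vert \pi_i (f_n) - \pi_i (p) \Vert \leq \Vert f_n - p \Vert_{\mathcal{F} (\mathcal{E},K,s_0)} \to 0$; I fix $N$ for which this supremum is $\leq \tfrac12$. A short computation with the continuous averaging functional $F \mapsto \vert G/N_i \vert^{-1} \sum_{uN_i} F(uN_i)$ shows that $\pi_i (p) F = 0$ for every mean-zero $F$, independently of which projection $\pi_i (p)$ happens to be, whence $\tfrac12 \Vert F \Vert \leq \Vert (I - \pi_i (f_N)) F \Vert$.

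The core step is to convert the right-hand side into graph Dirichlet energy. Since $G$ is discrete, $f_N$ is supported in a ball $B_R$ of the word metric associated with $S$; writing $I - \pi_i (f_N) = \sum_g f_N (g) (I - \pi_i (g))$ (using $\int f_N = 1$) and, for each $g \in B_R$, telescoping $F(hN_i) - F(g^{-1} h N_i)$ along a path of length $\leq R$ whose steps are labeled by generators in $S$, a Cauchy--Schwarz estimate gives $\Vert (I - \pi_i (g)) F \Vert^2 \leq R^2 D(F)$, where $D(F) = \sum_{s \in S} \Vert (I - \pi_i (s)) F \Vert^2 = 2 \sum_{\lbrace u,v \rbrace \in E_i} \Vert F(u) - F(v) \Vert^2$. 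Combining these estimates yields the uniform $X$-valued Poincar\'e inequality: there is a constant $C = 4 R^2 \Vert f_N \Vert_1^2$, independent of $i$, such that $\sum_{uN_i} \Vert F(uN_i) - m_i (F) \Vert^2 \leq C \sum_{\lbrace u,v \rbrace \in E_i} \Vert F(u) - F(v) \Vert^2$ for every $F : G/N_i \to X$, where $m_i (F)$ denotes the mean of $F$.

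Finally I would deduce that $\lbrace (G/N_i,S) \rbrace$ admits no uniform coarse embedding into $X$, which is precisely the asserted $X$-expander property. Given a putative embedding $\phi_i$ with moduli $\rho_-, \rho_+$, I apply the Poincar\'e inequality to $\phi_i$: its right-hand side is at most $C \vert S \vert \rho_+ (1)^2 \vert G/N_i \vert$, since adjacent vertices are at distance one. For the left-hand side the triangle inequality gives the Banach-space comparison $\sum_{uN_i} \Vert \phi_i (uN_i) - m_i (\phi_i) \Vert^2 \geq (4 \vert G/N_i \vert)^{-1} \sum_{u,v} \Vert \phi_i (uN_i) - \phi_i (vN_i) \Vert^2$; the uniformly bounded degree together with $\vert G/N_i \vert \to \infty$ forces a counting estimate showing that at least half of the pairs $(u,v)$ are at distance $\geq r_i$ with $r_i \to \infty$, and for such pairs $\Vert \phi_i (uN_i) - \phi_i (vN_i) \Vert \geq \rho_- (r_i)$. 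Comparing the two sides gives $\rho_- (r_i)^2 \leq 8 C \vert S \vert \rho_+ (1)^2$, contradicting $\rho_- (r_i) \to \infty$.

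I expect the main obstacle to lie in the core step: verifying that the uniform operator-norm convergence delivered by robust property (T) genuinely localizes to the single kernel $f_N$ and transfers, through the telescoping estimate, into a Dirichlet-energy bound with a constant independent of $i$. The two delicate points are that $\pi_i (p)$ annihilates mean-zero functions even though it need not be the orthogonal averaging projection, and that the comparison between $\sum_{u} \Vert F(u) - m_i (F) \Vert^2$ and the pairwise sum $\sum_{u,v} \Vert F(u) - F(v) \Vert^2$ (an exact identity in a Hilbert space) survives in a general Banach space with only a universal constant loss.
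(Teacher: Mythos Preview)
Your proposal is correct and follows the same two-stage scheme as the paper: first extract a uniform $X$-valued Poincar\'e inequality from robust property~(T), then deduce non-embeddability. The details differ only cosmetically. In the Poincar\'e step the paper centers at $v(\phi)$, defined as the constant value of $\pi(p)\phi$, whereas you center at the genuine mean $m_i(F)$; your justification that $\pi_i(p)$ annihilates mean-zero functions is valid (the averaging projection $A$ commutes with each $\pi_i(g)$, hence with $\pi_i(p)$, and $A$ is the identity on the range of $\pi_i(p)$), and in fact slightly tidier than what the paper does. For the telescoping bound the paper uses $\Vert \phi - \pi(g)\phi\Vert \le k\sum_{s\in S}\Vert\phi-\pi(s)\phi\Vert$ and then Cauchy--Schwarz over $S$, while you apply Cauchy--Schwarz along the path; both give a constant depending only on $R$, $\Vert f_N\Vert_1$ and $|S|$.

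In the non-embeddability step the paper argues via a median: at least half the vertices satisfy $\Vert\phi_i(x)\Vert\le\sqrt{CD}\,\rho_+(1)$, and that half has diameter $\gtrsim \log_D|V_i|$. Your route through the pairwise sum $\sum_{u,v}\Vert\phi_i(u)-\phi_i(v)\Vert^2$ is an equally standard alternative; the Banach-space comparison you flag as delicate is just $\Vert a-b\Vert^2\le 2(\Vert a-m\Vert^2+\Vert b-m\Vert^2)$ summed over pairs, so it holds with the universal constant $4$. Both concerns you raise at the end are therefore benign.
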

We shall start by proving the following:
\begin{proposition}
\label{spectral gap type proposition}
Let $G$ be a discrete finitely generated group with a symmetric generating set $S$ and let $N_i$ be a sequence of finite index normal subgroups of $G$ such that $\bigcap_i N_i = \lbrace 1  \rbrace$. Denote by $(V_i,E_i)$ the Cayley graph of $G / N_i$ with respect to $S$. Let $\mathcal{E}$ be a class of Banach spaces such that $\mathcal{E}$ is closed under $l_2$ sums. Assume that $G$ has robust Banach property (T) with respect to $\mathcal{E}$, then there is a constant $C$ such that for every $X \in \mathcal{E}$, every $i$ and every map $\phi :(V_i, E_i) \rightarrow X$, we have that there is some $v (\phi) \in X$ such that
$$ \sum_{x \in V_i} \Vert \phi (x) - v (\phi) \Vert^2_X \leq C \sum_{\lbrace x, y \rbrace \in E_{i}} \Vert \phi (x) - \phi (y) \Vert^2_X.$$
\end{proposition}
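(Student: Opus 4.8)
The plan is to realize each Cayley graph inside an isometric Banach representation of $G$ and then feed that representation into robust Banach property (T). Since $G$ is discrete and finitely generated it is compactly generated, so I use the formulation in Definition \ref{robust property T definition - compact generation} with $K=S$: there are $s_0>0$ and symmetric $f_n\in C_c(G)$ with $\int f_n=1$ converging in $C_{\mathcal{F}(\mathcal{E},S,s_0)}$ to $p$, with $\pi(p)$ a projection onto $X^\pi$ for every $\pi\in\mathcal{F}(\mathcal{E},S,s_0)$. Because the norm on $C_{\mathcal{F}(\mathcal{E},S,s_0)}$ is the supremum of $\Vert\pi(\cdot)\Vert$ over \emph{all} such $\pi$, I may fix one index $n_0$ with $\Vert f_{n_0}-p\Vert_{\mathcal{F}(\mathcal{E},S,s_0)}\le\tfrac12$. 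Since $G$ is discrete, $f_{n_0}$ is a finitely supported function; write $F=\mathrm{supp}(f_{n_0})$, $\ell_F=\max_{g\in F}|g|_S$ for the word length with respect to $S$, and $L=\sum_{g}|f_{n_0}(g)|$. The crucial point is that $n_0,F,\ell_F,L$ are fixed once and for all, independently of $i$ and $X$; this is what makes the eventual constant $C$ uniform.

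For each $i$ consider the representation $\pi_i$ of $G$ on $H_i:=\ell^2(G/N_i;X)$ given by $(\pi_i(g)\psi)(xN_i)=\psi(g^{-1}xN_i)$. Since $G$ permutes the finite transitive $G$-set $G/N_i$, the representation $\pi_i$ is isometric; its space $H_i$ is a finite $\ell^2$-sum of copies of $X$, hence lies in $\mathcal{E}$; and its invariant subspace $H_i^{\pi_i}$ consists precisely of the constant functions, which I identify with $X$. Thus $\pi_i\in\mathcal{F}(\mathcal{E},S,s_0)$, and $P_i:=\pi_i(p)$ is a bounded projection of $H_i$ onto the constants. Given $\phi:V_i=G/N_i\to X$, viewed as an element of $H_i$, set $v(\phi)\in X$ to be the value of the constant function $P_i\phi$, so that $\Vert\phi-P_i\phi\Vert_{H_i}^2=\sum_{x\in V_i}\Vert\phi(x)-v(\phi)\Vert_X^2$.

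The heart of the argument is the chain of three inequalities. First, since $\int f_{n_0}=1$, the operator $\pi_i(f_{n_0})$ fixes invariant vectors, so $(I-\pi_i(f_{n_0}))P_i\phi=0$ and hence $(I-\pi_i(f_{n_0}))(\phi-P_i\phi)=\phi-\pi_i(f_{n_0})\phi$; on the other hand $P_i(\phi-P_i\phi)=0$, so $\Vert\pi_i(f_{n_0})(\phi-P_i\phi)\Vert=\Vert\pi_i(f_{n_0}-p)(\phi-P_i\phi)\Vert\le\tfrac12\Vert\phi-P_i\phi\Vert$, and the triangle inequality gives $\Vert\phi-P_i\phi\Vert\le 2\Vert\phi-\pi_i(f_{n_0})\phi\Vert$. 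Second, expanding $\phi-\pi_i(f_{n_0})\phi=\sum_{g\in F}f_{n_0}(g)(\phi-\pi_i(g)\phi)$ and writing each $g\in F$ as a word of length $\le\ell_F$ in $S$, a telescoping sum together with the isometry of $\pi_i$ gives $\Vert\phi-\pi_i(g)\phi\Vert\le\ell_F\big(\sum_{s\in S}\Vert\phi-\pi_i(s)\phi\Vert^2\big)^{1/2}$, hence $\Vert\phi-\pi_i(f_{n_0})\phi\Vert\le L\,\ell_F\big(\sum_{s\in S}\Vert\phi-\pi_i(s)\phi\Vert^2\big)^{1/2}$. Third, $\sum_{s\in S}\Vert\phi-\pi_i(s)\phi\Vert_{H_i}^2=\sum_{s\in S}\sum_{x\in V_i}\Vert\phi(sx)-\phi(x)\Vert_X^2$, and since the pairs $(x,sx)$ range over the edges of the Cayley graph this is bounded by $C_S\sum_{\{x,y\}\in E_i}\Vert\phi(x)-\phi(y)\Vert_X^2$ for a constant $C_S$ depending only on $S$. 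Combining the three estimates yields the proposition with $C=4L^2\ell_F^2C_S$. I expect no real obstacle here; the only points demanding care are verifying that $\pi_i$ genuinely lies in $\mathcal{F}(\mathcal{E},S,s_0)$ (this is exactly where closure of $\mathcal{E}$ under $\ell^2$-sums enters) and making sure every quantity entering $C$ is manifestly independent of $i$ and $X$, so that the constant is uniform as the statement requires.
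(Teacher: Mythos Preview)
Your proposal is correct and follows essentially the same route as the paper's proof: both embed $\phi$ into the regular representation on $\ell^2(G/N_i;X)$, use closure of $\mathcal{E}$ under $\ell^2$-sums to place this space in $\mathcal{E}$, pick a single $f$ close to $p$ to get $\Vert\phi-\pi(p)\phi\Vert\le 2\Vert\phi-\pi(f)\phi\Vert$, and then telescope over a word for each $g\in\mathrm{supp}(f)$ to reduce to the generator differences. The only cosmetic differences are your use of the left regular action (the paper uses the right), and your packaging of the telescoping bound via an $\ell^2$-sum over $S$ rather than the paper's $\ell^1$-sum followed by Cauchy--Schwarz; neither affects the argument or the uniformity of the constant.
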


\begin{proof}
Fix some $X \in \mathcal{E}$ and some $i$. Consider $L^2 (G / N_i ; X)$ with the representation $\pi : G \rightarrow L^2 (G / N_i ; X)$, defined as $\pi (g). \phi (g') = \phi (g' g)$. Then $L^2 (G / N_i ; X)$ is the $l_2$ sum of $[G:N_i]$ copies of $X$ and therefore $L^2 (G / N_i ; X) \in \mathcal{E}$. Note that $\pi$ is an isometric representation on $L^2 (G / N_i ; X)$ and therefore $\pi \in \mathcal{F} (\mathcal{E}, 0)$. 

From the fact that $G$ has robust Banach property (T) on $\mathcal{E}$, we get that there is $p \in C_{\mathcal{F} (\mathcal{E},0)}$ such that $\pi (p)$ is the projection on $ L^2 (G / N_i ; X)^\pi$. 
Note that the space of invariant vectors under $\pi$ is exactly the space of constant functions, so for every $\phi \in L^2 (G / N_i ; X)$, we can define $v (\phi ) \in X$ as the constant value of $\pi (p).\phi$. By the definition of robust Banach property (T) there is a real function $f \in C_c (G)$ such that $\int f =1$ and $\Vert p - f \Vert_{\mathcal{F} (\mathcal{E},0)} \leq \frac{1}{2}$. 

Note that for every $\phi$ we have that $\pi (f) \pi (p). \phi = \pi (p) . \phi$. Using this equality we get that $(\pi (f)-\pi (p)). \phi = (\pi (f)-\pi (p)).(\phi - \pi (p).\phi)$. Therefore
\begin{dmath*}
\Vert \phi - \pi (p).\phi \Vert_{L^2 (G/N_i ; X)}  \leq  \\
\Vert \phi - \pi (f).\phi \Vert_{L^2 (G/N_i ; X)} + \Vert \pi (f).\phi - \pi (p).\phi \Vert_{L^2 (G/N_i ; X)} =\\  \Vert \phi - \pi (f).\phi \Vert_{L^2 (G/N_i ; X)} + \Vert (\pi (f)-\pi (p)).(\phi - \pi (p).\phi) \Vert_{L^2 (G/N_i ; X)}  \leq  \\ \Vert \phi - \pi (f).\phi \Vert_{L^2 (G/N_i ; X)} + \frac{1}{2} \Vert \phi - \pi (p).\phi \Vert_{L^2 (G/N_i ; X)}.
\end{dmath*}
This yields that 
$$\Vert \phi - \pi (p).\phi \Vert_{L^2 (G/N_i ; X)}  \leq 2 \Vert \phi - \pi (f).\phi \Vert_{L^2 (G/N_i ; X)}$$
or equivalently
\begin{equation*}
\Vert \phi - \pi (p).\phi \Vert_{L^2 (G/N_i ; X)}^2  \leq 4 \Vert \phi - \pi (f).\phi \Vert_{L^2 (G/N_i ; X)}^2.
\end{equation*}
Note that by definition 
\begin{equation*}
\Vert \phi - \pi (p).\phi \Vert_{L^2 (G/N_i ; X)}^2  = \sum_{g \in G/N_i} \Vert \phi (g) - v (\phi) \Vert^2_X = \sum_{x \in V_i} \Vert \phi (x) - v (\phi) \Vert^2_X.
\end{equation*}
Therefore
\begin{equation}
\label{appendix expanders inequality}
\sum_{x \in V_i} \Vert \phi (x) - v (\phi) \Vert^2_X \leq 4 \Vert \phi - \pi (f).\phi \Vert_{L^2 (G/N_i ; X)}^2.
\end{equation}
Recall that $f$ is compactly supported and therefore there is some $k \in \mathbb{N}$ such that $supp (f) \subseteq S^k$. Note that for every $g \in S^k$, we have by the triangle inequality that 
$$\Vert \phi - \pi (g).\phi \Vert_{L^2 (G/N_i ; X)} \leq k \sum_{s \in S} \Vert \phi - \pi (s).\phi \Vert_{L^2 (G/N_i ; X)} .$$
Therefore if we denote $M = \max_{g \in G} \vert f(g) \vert$, we get that 
$$ \Vert \phi - \pi (f).\phi \Vert_{L^2 (G/N_i ; X)} \leq M k \sum_{s \in S} \Vert \phi - \pi (s).\phi \Vert_{L^2 (G/N_i ; X)} .$$
This yields that 
\begin{dmath*}
 \Vert \phi - \pi (f).\phi \Vert_{L^2 (G/N_i ; X)}^2 \leq  \left(M k  \sum_{s \in S} \Vert \phi - \pi (s).\phi \Vert_{L^2 (G/N_i ; X)} \right)^2 \leq \\
 M^2 k^2 \vert S \vert  \sum_{s \in S} \Vert \phi - \pi (s).\phi \Vert_{L^2 (G/N_i ; X)}^2 =  M^2 k^2 \vert S \vert \sum_{g \in G / N_i} \sum_{s \in S}  \Vert \phi (g) - \pi (s).\phi (g) \Vert_X^2 = \\
  M^2 k^2 \vert S \vert \sum_{g \in G / N_i} \sum_{s \in S}  \Vert \phi (g) - \phi (gs) \Vert_X^2 = M^2 k^2 \vert S \vert \sum_{x \in V_i} \sum_{y \in V_i, \lbrace x,y \rbrace \in E_i} \Vert \phi (x) - \phi (y) \Vert_X^2 =  \\
  2 M^2 k^2 \vert S \vert \sum_{\lbrace x, y \rbrace \in E_i } \Vert \phi (x) - \phi (y) \Vert_X^2.
\end{dmath*}
We are done by combining the above with \eqref{appendix expanders inequality}. Note that $M,k, \vert S \vert$ are all independent of the choice of $N_i$ and therefore taking $C = 8 M^2 k^2 \vert S \vert$ gives a constant that is uniform for all $N_i$'s.
\end{proof}

Using this proposition, we can prove proposition \ref{expander proposition - appendix}, by proving the following lemma:
\begin{lemma}
Let $X$ be a Banach space and let $\lbrace (V_i,E_i) \rbrace_{i \in \mathbb{N}}$ be a family of graphs with uniformly bounded valency such that $\vert V_i \vert \rightarrow \infty$ as $i \rightarrow \infty$. Assume that there is a constant $C$ such that for every $i \in \mathbb{N}$ and every $\phi : (V_i,E_i) \rightarrow X$ there is some $v (\phi) \in X$ such that 
$$ \sum_{x \in V_i} \Vert \phi (x) - v (\phi) \Vert^2_X \leq C \sum_{\lbrace x, y \rbrace \in E_{i}} \Vert \phi (x) - \phi (y) \Vert^2_X.$$
Then $\lbrace (V_i,E_i) \rbrace_{i \in \mathbb{N}}$ is a family of $X$-expanders. 
\end{lemma}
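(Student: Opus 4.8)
The plan is to argue by contradiction. Suppose $\lbrace (V_i,E_i) \rbrace_{i \in \mathbb{N}}$ admits a uniform coarse embedding into $X$: there are maps $\phi_i : V_i \rightarrow X$ and functions $\rho_-, \rho_+ : \mathbb{N} \rightarrow \mathbb{R}$ with $\lim_n \rho_-(n) = \infty$ and
$$\rho_-(d_i(x,y)) \leq \Vert \phi_i(x) - \phi_i(y) \Vert_X \leq \rho_+(d_i(x,y)), \quad \forall i, \ \forall x,y \in V_i.$$
Let $D$ be a uniform bound on the valencies of the graphs $(V_i,E_i)$.

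First I would feed $\phi := \phi_i$ into the hypothesized Poincaré-type inequality to get a point $v(\phi_i) \in X$ with
$$\sum_{x \in V_i} \Vert \phi_i(x) - v(\phi_i) \Vert_X^2 \leq C \sum_{\lbrace x,y \rbrace \in E_i} \Vert \phi_i(x) - \phi_i(y) \Vert_X^2 \leq C \vert E_i \vert \, \rho_+(1)^2 \leq \tfrac{CD}{2}\, \rho_+(1)^2 \, \vert V_i \vert,$$
where I used that the endpoints of each edge are at graph distance $1$ and that $\vert E_i \vert \leq \tfrac{D}{2}\vert V_i \vert$. Writing $A = \tfrac{CD}{2}\rho_+(1)^2$, the average of $\Vert \phi_i(x) - v(\phi_i) \Vert_X^2$ over $x \in V_i$ is at most $A$, so by Markov's inequality the set $W_i := \lbrace x \in V_i : \Vert \phi_i(x) - v(\phi_i) \Vert_X \leq \sqrt{2A} \rbrace$ satisfies $\vert W_i \vert \geq \tfrac{1}{2}\vert V_i \vert$.

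Next I would bound the graph diameter of $W_i$ using the lower estimate of the coarse embedding. For $x,y \in W_i$ the triangle inequality gives $\Vert \phi_i(x) - \phi_i(y) \Vert_X \leq 2\sqrt{2A}$, hence $\rho_-(d_i(x,y)) \leq 2\sqrt{2A}$. Since $\rho_-(n) \rightarrow \infty$, there is an $R$ — depending only on $C$, $D$, $\rho_+(1)$ and the function $\rho_-$, but not on $i$ — with $\rho_-(n) > 2\sqrt{2A}$ for all $n \geq R$; therefore $d_i(x,y) < R$ for all $x,y \in W_i$, i.e. $W_i$ lies inside a ball of radius $R$ in $(V_i,E_i)$. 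A ball of radius $R$ in a graph of maximal valency $D$ has at most $1 + D + D^2 + \dots + D^R = \tfrac{D^{R+1}-1}{D-1}$ vertices, a bound independent of $i$. This contradicts $\vert W_i \vert \geq \tfrac{1}{2}\vert V_i \vert \rightarrow \infty$, so no uniform coarse embedding into $X$ can exist, and $\lbrace (V_i,E_i) \rbrace$ is a family of $X$-expanders.

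I do not expect a genuine obstacle in this argument — it is the standard "Poincaré inequality kills coarse embeddings" scheme. The only points that need a little care are: one never needs any monotonicity of $\rho_+$, since only its value $\rho_+(1)$ at distance $1$ enters (the edge terms); the constant $A$, and hence $R$, must be tracked to be uniform in $i$, which works precisely because $C$, $D$ and $\rho_\pm$ are; and one should note $\vert W_i \vert \geq \tfrac12 \vert V_i\vert \to \infty$ follows from the hypothesis $\vert V_i\vert \to \infty$.
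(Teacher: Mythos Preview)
Your argument is correct and follows essentially the same route as the paper's proof: both apply the Poincar\'e inequality, bound the edge sum by $\tfrac{D}{2}\vert V_i\vert\,\rho_+(1)^2$, use a Markov/median step to find a subset $W_i$ (the paper calls it $U_i$) of size at least $\tfrac{1}{2}\vert V_i\vert$ on which $\phi_i$ is concentrated in a fixed ball of $X$, and then obtain a contradiction from the lower control $\rho_-$ together with the ball-growth bound in a graph of valency $\leq D$. The only cosmetic difference is the direction of the final contradiction: the paper lets the diameter of $U_i$ go to infinity and bounds $\rho_-$ at that scale, whereas you bound the diameter of $W_i$ by a fixed $R$ and let $\vert W_i\vert\to\infty$; these are equivalent formulations of the same counting argument.
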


\begin{proof}
Let $D \geq 2$ be a uniform bound on the valency of $\lbrace (V_i,E_i) \rbrace_{i \in \mathbb{N}}$. Assume towards contradiction that there is a sequence of maps $\phi_i : V_i \rightarrow X$ and functions $\rho_-, \rho_+ : \mathbb{N} \rightarrow \mathbb{R}$ such that $\lim_k \rho_- (k) = \infty$ and 
$$\forall i \in \mathbb{N}, \forall x,y \in V_i, \rho_{-} (d_i (x,y)) \leq \Vert \phi_i (x) - \phi_i (y) \Vert \leq \rho_+ (d_i (x,y)),$$
where $d_i (x,y)$ is the distance in the graph $(V_i,E_i)$ between $x$ and $y$. By replacing $\phi_i$ by $\phi_i - v (\phi_i)$, we can assume that for every such $\phi_i$, we have that 
$$ \sum_{x \in V_i} \Vert \phi_i (x) \Vert^2_X \leq C \sum_{\lbrace x, y \rbrace \in E_{i}} \Vert \phi_i (x) - \phi_i (y) \Vert^2_X.$$
Note that
$$\sum_{\lbrace x, y \rbrace \in E_{i}} \Vert \phi_i (x) - \phi_i (y) \Vert^2_X \leq \vert E_i \vert \rho_+ (1)^2 \leq \dfrac{D \vert V_i \vert}{2} \rho_+ (1)^2.$$
Therefore
 $$\sum_{x \in V_i} \Vert \phi_i (x) \Vert^2_X \leq  \frac{\vert V_i \vert}{2} (C D \rho_+ (1)^2).$$
 Consider the median value of the multiset $\lbrace \Vert \phi_i (x) \Vert : x \in V_i \rbrace$. If this median is strictly greater than $\sqrt{C D} \rho_+ (1)$, we get a contradiction to the above inequality. Therefore, there is a set $U_i \subseteq V_i$ such that $\vert U_i \vert \geq \lfloor \frac{\vert V_i \vert}{2} \rfloor$ and
 $$\forall x \in U_i, \Vert \phi_i (x) \Vert \leq \sqrt{C D} \rho_+ (1).$$
 Therefore by triangle inequality
\begin{equation}
\label{appendix expanders ineq 3}
\forall x,y \in U_i, \Vert \phi_i (x) - \phi_i (y) \Vert \leq 2\sqrt{CD} \rho_+ (1).
\end{equation}

On the other hand, since the valency in all the graphs is bounded by $D$, we have that 
$$\forall i \in \mathbb{N}, \forall k \in \mathbb{N}, \forall x \in V_i, \vert \lbrace y \in V_i : d_i (x,y) < k \rbrace \vert < D^{k}.$$
Denote $diam (U_i)$ to be the diameter of $U_i$ in $V_i$, then by the above inequality we get that 
$$ diam (U_i) \geq \frac{\ln (\vert U_i \vert)}{\ln (D)} \geq \frac{\ln (\lfloor \frac{\vert V_i \vert}{2} \rfloor)}{\ln (D)}.$$
Therefore there are $x,y \in U_i$ such that 
$$\rho_{-} \left( \frac{\ln (\lfloor \frac{\vert V_i \vert}{2} \rfloor)}{\ln (D)} \right) \leq \Vert \phi_i (x) - \phi_i (y) \Vert.$$ 
Combining this with \eqref{appendix expanders ineq 3} yields that for every $i$ we have that 
$$\rho_{-} \left( \frac{\ln (\lfloor \frac{\vert V_i \vert}{2} \rfloor)}{\ln (D)} \right) \leq 2\sqrt{CD} \rho_+ (1).$$
But from the assumption that $\lim_i \vert V_i \vert = \infty$ we get a contradiction to the assumption that $\lim_k \rho_- (k) = \infty$.
\end{proof}

\bibliographystyle{plain}
\bibliography{bibl}

\end{document}